\documentclass[reqno]{amsart}

\usepackage{amsmath,amsthm,amssymb,bm}
\usepackage{hyperref,ytableau}
\ytableausetup{boxsize=1.2em}
\usepackage{a4wide}
\usepackage{cleveref}

\usepackage{graphicx,color}
\usepackage{tikz}
\numberwithin{equation}{section}

\newtheorem{thm}{Theorem}[section]
\newtheorem{lem}[thm]{Lemma}
\newtheorem{prop}[thm]{Proposition}

\theoremstyle{definition}
\newtheorem{exam}[thm]{Example}
\newtheorem{defn}[thm]{Definition}

\newtheorem{problem}[thm]{Problem}

\crefname{lem}{Lemma}{Lemmas}
\crefname{thm}{Theorem}{Theorems}
\crefname{exam}{Example}{Examples}
\crefname{prop}{Proposition}{Propositions}
\crefname{question}{Question}{Questions}
\crefname{defn}{Definition}{Definitions}
\crefname{conj}{Conjecture}{Conjectures}
\crefname{figure}{Figure}{Figures}
\crefname{cor}{Corollary}{Corollaries} 
\crefformat{equation}{(#2#1#3)}
\Crefformat{equation}{Equation #2(#1)#3}

\AddToHook{env/lem/begin}{\crefalias{thm}{lem}}
\AddToHook{env/prop/begin}{\crefalias{thm}{prop}}
\AddToHook{env/cor/begin}{\crefalias{thm}{cor}}
\AddToHook{env/exam/begin}{\crefalias{thm}{exam}}
\AddToHook{env/defn/begin}{\crefalias{thm}{defn}}
\AddToHook{env/conj/begin}{\crefalias{thm}{conj}}
\AddToHook{env/question/begin}{\crefalias{thm}{question}}
\AddToHook{env/problem/begin}{\crefalias{thm}{problem}}
\AddToHook{env/remark/begin}{\crefalias{thm}{remark}}

\newcommand\maxDes{\operatorname{maxDes}}
\newcommand\LR{\operatorname{LR}}
\newcommand\spin{\operatorname{spin}}
\newcommand\GL{\operatorname{GL}}
\newcommand\rect{\operatorname{rect}}
\newcommand\st{\operatorname{st}}
\newcommand\RSK{\operatorname{RSK}}
\newcommand\shuffle{\operatorname{shuffle}}
\newcommand\sh{\operatorname{sh}}
\newcommand\ch{\operatorname{ch}}
\newcommand\maj{\operatorname{maj}}
\newcommand\Des{\operatorname{Des}}
\newcommand\QQ{\mathbb{Q}}
\newcommand{\CC}{\mathbb{C}}
\newcommand{\ZZ}{\mathbb{Z}}
\newcommand\LL{\mathcal{L}}

\newcommand\SSYT{\operatorname{SSYT}}
\newcommand\SYT{\operatorname{SYT}}
\newcommand\YDT{\operatorname{YDT}}

\newcommand\wt{\operatorname{wt}}

\renewcommand\emph[1]{\textcolor{blue}{\it #1}}

\usepackage{xcolor}
\usepackage{textcomp} \usepackage{listings}
\usepackage{amssymb}
\usepackage{upquote}
\usepackage[T1]{fontenc}
\usepackage{multirow}
\usepackage{tikz-cd}

\theoremstyle{definition}

\title{Thrall's problem for two rows}

\author{JiSun Huh}
\address{Department of Mathematics, University of Seoul, Seoul 02504, South Korea}
\email{hyunyjia@yonsei.ac.kr}

\author{Woo-Seok Jung}
\address{Department of Mathematics, University of Seoul, Seoul 02504, South Korea}
\email{jungws@uos.ac.kr}

\author{Jang Soo Kim}
\address{Department of Mathematics,
Sungkyunkwan University (SKKU), Suwon 16419, South Korea}
\email{jangsookim@skku.edu}

\author{Meesue Yoo}
\address{
Department of Mathematics, Chungbuk National University, Cheongju 28644,
South Korea}
\email{meesueyoo@chungbuk.ac.kr}

\keywords{Thrall’s problem, higher Lie modules, domino tableaux, semistandard Young tableaux, plethysm}
\subjclass[2020]{Primary: 05E10, 05A05; Secondary: 17B01, 20G05}

\begin{document}

\begin{abstract}
In this paper, we study Thrall's problem for the higher Lie modules $L_\lambda$. Our main result provides a tableau-theoretic description of the Schur expansion of the character of $L_\lambda$ when $\lambda$ has two rows, thereby solving Thrall's problem in this case. This formula is expressed in terms of standard Young tableaux with major index congruence conditions and a spin-parity condition defined through bijections with Yamanouchi domino tableaux. We also obtain tableau formulas for hook shapes and partitions with distinct parts, and these results extend to all partitions in which each part greater than $2$ occurs at most twice.
\end{abstract}

\maketitle


\section{Introduction}

In this paper, we study Thrall's problem for the higher Lie modules.
We begin by describing the problem. See
\Cref{sec:preliminaries} for the undefined terms.

Consider the \( \CC \)-vector space \(V=\CC^N\) and let
\(T(V)=\bigoplus_{m\ge 0} V^{\otimes m} \)
be the tensor algebra on \(V\), graded by tensor degree with \(V\) in
degree \(1\). The \emph{free Lie algebra} \( \operatorname{Lie}(V) \)
on \(V\) is the Lie subalgebra of \( T(V) \) generated by \(V\) under
the commutator bracket \([x,y]=xy-yx\). Since the bracket is
homogeneous with respect to the tensor grading, we have
\(\operatorname{Lie}(V)=\bigoplus_{m\ge 1} \LL_m(V) \),
where \(\LL_m(V):=\operatorname{Lie}(V)\cap V^{\otimes m} \) is the
homogeneous component of degree \( m \).
For a partition \(\lambda \), the \emph{higher Lie module}
\( \LL_\lambda(V) \) indexed by \(\lambda\) is defined by
\begin{equation}\label{eq:L_la(V)}
\LL_\lambda(V):=\bigotimes_{i\ge 1}\mathrm{Sym}^{m_i(\lambda)}(\LL_i(V)),
\end{equation}
where \( m_i(\lambda) \) denotes the number of parts equal to \( i \)
in \( \lambda \). 
We also use the convention that \(\LL_\emptyset(V)\) denotes the trivial
one-dimensional \(\GL(V)\)-module.

By the Poincar\'e--Birkhoff--Witt theorem, the tensor algebra is identified as a graded \(\GL(V)\)-module with the symmetric algebra on the free Lie algebra:
\[
  T(V)\cong  \operatorname{Sym}(\operatorname{Lie}(V))
  \cong \bigoplus_{\lambda} \LL_\lambda(V),
\]
where the direct sum is over all partitions \(\lambda\).
Thus the higher Lie modules arise naturally from the isomorphism above. For brevity,
we write
\[
L_\lambda:=\LL_\lambda(V),
\qquad
L_n:=L_{(n)}.
\]

For \(g\in \GL(V)\), let \(x_1,\dots,x_N\) be the eigenvalues of
\(g\). Then the character \( \ch(L_\lambda) \) of the polynomial
\(\GL(V)\)-module \(L_\lambda\) is a symmetric polynomial in
\(x_1,\dots,x_N\). In 1942, Thrall \cite{Thrall1942} asked for the
Schur decomposition of the higher Lie modules \(L_\lambda\).
Equivalently, the problem can be stated as follows.

\begin{problem}[Thrall's Problem]
  For a partition \(\lambda\) of \(n\), find a combinatorial
  interpretation for the Schur coefficients of the characters
  \(\ch(L_\lambda)\) in the limit \(N\to\infty\):
\begin{equation}\label{eq:4}
\ch(L_\lambda)=\sum_{\mu\vdash n} c_{\lambda,\mu}s_\mu.
\end{equation}
\end{problem}

Thrall's problem is notoriously difficult, and it has been solved only
in the cases where \( \lambda \) has one row, one column, or two
equal columns. We review these results below.

For the one-column and two-column cases, we have
\begin{align}
\label{eq:1^n}  \ch(L_{(1^n)}) &= s_n,\\
\label{eq:2^n}   \ch(L_{(2^n)}) &= \sum_\mu  s_\mu,
\end{align}
where the sum is over all partitions \( \mu\vdash 2n \) such that
each column has even length. See \cite[Remark~2.17]{Ahlbach2018} for
more details. The following theorem answers Thrall's problem for the
one-row case. Here, \( \SYT(\mu) \) is the set of standard Young
tableaux of shape \( \mu \) and \( \maj(T) \) is the major index of
\( T \).

\begin{thm}[Kra\'skiewicz--Weyman \cite{KW}]\label{thm:KW}
For every \(n\ge 1\), we have
\begin{equation}\label{eq:13}
\ch(L_n)=\sum_{\mu\vdash n} |\{T\in\SYT(\mu): \maj(T)\equiv 1 \pmod n\}| s_\mu.
\end{equation}
\end{thm}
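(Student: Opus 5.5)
The plan is to pass to the symmetric group through Schur--Weyl duality, identify the resulting $S_n$-module as one induced from a cyclic subgroup, and then compute its multiplicities with the fake-degree formula.

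By Schur--Weyl duality, $L_n$ corresponds to the $S_n$-module $\operatorname{Lie}_n$: the multilinear part of $\LL_n(V)$, namely the span of bracketings of $v_1,\dots,v_n$ in which each variable appears exactly once. Hence $\ch(L_n)$ is the Frobenius characteristic of $\operatorname{Lie}_n$. I would then invoke Klyachko's theorem,
\[
\operatorname{Lie}_n\cong \operatorname{Ind}_{C_n}^{S_n}\omega,
\]
where $C_n$ is generated by an $n$-cycle $\sigma$ and $\omega(\sigma)=\zeta=e^{2\pi i/n}$. This can be proved in two ways. One is to compute the character of $\operatorname{Lie}_n$ via the Dynkin idempotent, or equivalently via the Witt/M\"obius formula
\[
\ch(L_n)=\frac1n\sum_{d\mid n}\mu(d)\,p_d^{n/d},
\]
which follows from PBW and the cyclotomic identity. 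The other is to note that $\operatorname{Lie}_n$ is the $\zeta$-eigenspace of the cycle action on $\CC S_n$. Either way, the Frobenius characteristic of the induced module equals
\[
\frac1n\sum_{k=0}^{n-1}\zeta^{-k}\, p_{\text{cycle type of }\sigma^k}.
\]
Since $\sigma^k$ has cycle type $(n/d)^d$ with $d=\gcd(k,n)$, grouping the terms by $d$ recovers the M\"obius formula via Ramanujan sums.

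By Frobenius reciprocity, the multiplicity of $s_\mu$ is
\[
\langle \chi^\mu|_{C_n},\omega\rangle=\frac1n\sum_{k}\chi^\mu(\sigma^k)\zeta^{-k}.
\]
To evaluate this, I would use the fake-degree formula $\sum_{T\in\SYT(\mu)}q^{\maj(T)}=f^\mu(q)$, equivalently Stanley's principal specialization
\[
s_\mu(1,q,\dots,q^{n-1},\dots)\,\prod_{i=1}^n(1-q^i)=f^\mu(q).
\]
The key input is Springer's regular-element theorem: for the regular element $\sigma$ of order $n$, $\chi^\mu(\sigma^k)=f^\mu(\zeta^k)$. Concretely, $p_{(n/d)^d}$ evaluated through the principal specialization with $q\to\zeta^k$ picks out exactly $f^\mu(\zeta^k)$. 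This can be checked directly: write $s_\mu=\sum_\nu \chi^\mu_\nu p_\nu/z_\nu$, multiply by $\prod(1-q^i)$, and let $q\to\zeta^k$. Only $\nu=(n/d)^d$ survives the limit, because $\prod(1-q^i)$ vanishes to order exactly $d$ at a primitive $(n/d)$-th root of unity, while $p_\nu(1,q,\dots)=\prod_j(1-q^{\nu_j})^{-1}$ has a pole of order equal to the number of parts of $\nu$ divisible by $n/d$. These order counts match only for $\nu=(n/d)^d$, and the residue constants are the standard ones.

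Finally, the roots-of-unity filter gives
\[
\frac1n\sum_k f^\mu(\zeta^k)\zeta^{-k}=\#\{T\in\SYT(\mu):\maj(T)\equiv1\pmod n\},
\]
which proves the statement. I expect the main obstacle to be the evaluation $\chi^\mu(\sigma^k)=f^\mu(\zeta^k)$, specifically the limit computation and the normalizing constants $z_\nu$. For that step I would first try the principal-specialization argument just described, since it is self-contained, and fall back on citing Springer's theorem if the constants become messy. Klyachko's isomorphism is a secondary obstacle, which I would handle through the Witt character formula.
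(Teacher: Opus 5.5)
Your proposal is correct, but the paper gives no proof of its own to compare it with. It cites Kra\'skiewicz--Weyman, records Klyachko's equivalent formula $\frac1n\sum_{d\mid n}\mu(d)\chi^\nu(\tau^{n/d})$, and defers the equivalence of the two formulas to Ahlbach et al.

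What you have written supplies exactly that equivalence, by the standard route:
\begin{itemize}
\item Witt's character formula;
\item Frobenius reciprocity for the induction from $C_n$;
\item the evaluation $\chi^\mu(\sigma^k)=f^\mu(\zeta^k)$;
\item a roots-of-unity filter.
\end{itemize}
The step you flagged as the main obstacle goes through with no messy constants. If $\zeta^k$ is a primitive $m$-th root of unity and $d=n/m$, then
\[
\lim_{q\to\zeta^k}\frac{\prod_{i=1}^{n}(1-q^i)}{(1-q^m)^d}
= d!\cdot\Bigl(\prod_{r=1}^{m-1}(1-\zeta^{kr})\Bigr)^{d}=d!\,m^d=z_{(m^d)}.
\]
This cancels the $1/z_{(m^d)}$ exactly, and every other $\nu$ contributes $0$ by your order count.

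Two small remarks. First, the Schur--Weyl and induced-module layers can be dropped entirely. Pairing the Witt formula with $s_\mu$ already gives $\frac1n\sum_{d\mid n}\mu(d)\,\chi^\mu\bigl((d^{n/d})\bigr)$, with $\mu(d)$ the M\"obius function; this is the paper's Klyachko form. Inserting your evaluation and grouping $k$ by the order of $\zeta^k$ then yields the count directly, using that the sum of the primitive $m$-th roots of unity is $\mu(m)$. Second, calling $\operatorname{Lie}_n$ ``the $\zeta$-eigenspace of the cycle action'' is not an observation; it is the substance of Klyachko's theorem, via his Lie idempotent. Your argument does not rely on it, so correctness is unaffected.

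Compared with the bare citation, your argument shows why the condition is $\maj(T)\equiv 1\pmod n$: it comes from fake degrees evaluated at $n$-th roots of unity, i.e.\ Springer's theorem for the regular $n$-cycle. The same computation also gives the general statement that $s_\mu$ occurs in $\operatorname{Ind}_{C_n}^{S_n}\omega^j$ with multiplicity $|\{T\in\SYT(\mu):\maj(T)\equiv j\pmod n\}|$.
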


\Cref{thm:KW} first appeared in a 1987 preprint by Kra\'skiewicz and
Weyman, which was later published in \cite{KW}. An equivalent formula had been
obtained by Klyachko in 1974 \cite{Klyachko1974}:
\[
\ch(L_n)
= \sum_{\nu\vdash n} \left( \frac{1}{n}\sum_{d\mid n} \mu(d)\,\chi^\nu(\tau^{n/d}) \right) s_\nu,
\]
where \(\tau\) is the long cycle in \(S_n\), \(\chi^\nu\) is the
irreducible character of \(S_n\) indexed by \(\nu\), and \(\mu\) is
the M\"obius function. See \cite[Section~2.6]{Ahlbach2018} for the
connection between the two formulas.

The starting point of this paper is the observation that the
coefficient of \( s_\mu \) in \eqref{eq:13} is given by the number of
standard Young tableaux of shape \( \mu \) satisfying a certain
condition. Motivated by this, we define a
\emph{\( (\lambda,\mu) \)-Thrall subset} to be a subset of
\( \SYT(\mu) \) whose cardinality is equal to the coefficient
\( c_{\lambda,\mu} \) in \eqref{eq:4}.

Note that given \( \lambda\vdash n \), finding a
\( (\lambda,\mu) \)-Thrall subset for each \( \mu\vdash n \) gives an
answer to Thrall's problem for \( L_\lambda \). For example, the set
in \eqref{eq:13} is an \( ((n),\mu) \)-Thrall subset. Observe that
\eqref{eq:1^n} and \eqref{eq:2^n} can be rewritten as
\begin{align}\label{eq:one-col}
  \ch(L_{(1^n)})&= \sum_{\mu\vdash n} |\{T\in \SYT (\mu):\Des(T)=\emptyset\}| s_\mu,\\
\label{eq:two-col}
  \ch(L_{(2^n)})&= \sum_{\mu\vdash 2n} |\{T\in \SYT (\mu):\{1,3,\dots,2n-1\}\subseteq\Des(T)= \maxDes(\mu)\}| s_\mu,
\end{align}
where \( \Des(T) \) is the descent set of \( T \), and writing \( \mu' \) for the conjugate partition of \( \mu \),
\begin{equation}\label{eq:11}
  \maxDes(\mu) = [2n-1]\setminus \{\mu'_1 + \cdots + \mu'_j: j\in [\mu_1-1]  \}.
\end{equation}
Hence, the sets given in \eqref{eq:one-col} and \eqref{eq:two-col} are
a \( ((1^n),\mu) \)-Thrall subset and a \( ((2^n),\mu) \)-Thrall
subset, respectively. The following argument shows that
\( (\lambda,\mu) \)-Thrall subsets always exist.

By the Poincar\'e--Birkhoff--Witt theorem and Schur--Weyl duality, we have
\[
  \bigoplus_{\lambda\vdash n} L_\lambda \cong V^{\otimes n} \cong
  \bigoplus_{\mu\vdash n} V_\mu^{\oplus |\SYT(\mu)|},
\]
where \( V_\mu \) is the irreducible representation of
\( \GL(V) \) indexed by \( \mu \). Hence,
\[
  \sum_{\lambda\vdash n} \ch(L_\lambda) = \sum_{\mu\vdash n} |\SYT(\mu)| s_\mu,
\]
and we obtain that for a fixed \( \mu\vdash n\),
\[
  \sum_{\lambda\vdash n} c_{\lambda,\mu} = |\SYT(\mu)|.
\]
This implies that for any partitions \( \lambda \) and \( \mu \) of
\( n \), we have \( c_{\lambda,\mu} \le |\SYT(\mu)| \). Therefore,
there always exists a subset of \( \SYT(\mu) \) with cardinality
\( c_{\lambda,\mu} \). In \Cref{sec:hook-distinct}, we will prove the
stronger bound \( c_{\lambda,\mu} \le |\SYT_\lambda(\mu)| \) for
a certain subset \( \SYT_\lambda(\mu) \) of \( \SYT(\mu) \).

The goal of this paper is to find \( (\lambda,\mu) \)-Thrall subsets
for several classes of \( \lambda \): hook shapes, partitions with
distinct parts, and partitions with two rows. Most generally, we find a \( (\lambda,\mu) \)-Thrall subset when
\( \lambda \) is a partition such that every part greater than
\( 2 \) appears at most twice.

The key tool in this paper is the notion of the block-major index
\( \maj_{\lambda,i}(T) \) of a standard Young tableau \( T \) with
respect to a partition \( \lambda \), which is a refinement of
\( \maj(T) \). We then define
\[
    \SYT_\lambda(\mu) := \left\{T\in \SYT(\mu) :
      \maj_{\lambda,i}(T)\equiv 1 \pmod{\lambda_i} \text{ for all }i \right\}.
\]
See \Cref{sec:hook-distinct} for the precise definitions. In
\Cref{thm:distinct}, we show that if \( \lambda \) has distinct parts,
then
\[
\ch(L_\lambda) = \sum_{\mu\vdash n} |\SYT_\lambda(\mu)| s_\mu.
\]

The main result of the paper is \Cref{thm:nn-final-single-sum}, which
solves Thrall's problem for \( \lambda=(n,n) \) as follows:
\[
\ch(L_{(n,n)})
= \sum_{\mu\vdash 2n} |\SYT_{(n,n)}^{<}(\mu)\sqcup \SYT_{(n,n)}^{\spin}(\mu)| s_\mu.
\]
Here, \(\SYT_{(n,n)}^{<}(\mu)\) and \(\SYT_{(n,n)}^{\spin}(\mu)\) are
subsets of \(\SYT_{(n,n)}(\mu)\) with certain conditions on the
subtableaux with the first \( n \) entries and the last \( n \)
entries. In particular, the description of
\(\SYT_{(n,n)}^{\spin}(\mu)\) involves a spin statistic coming from
the work of Carr\'e and Leclerc \cite{Carre1995} on domino tableaux.
To prove the main result, we use various results on semistandard Young
tableaux and Yamanouchi domino tableaux due to Carr\'e and Leclerc
\cite{Carre1995} and van Leeuwen \cite{vanLeeuwen1999, vanLeeuwen2000,
  vanLeeuwen2001}.

This paper is organized as follows. In \Cref{sec:preliminaries}, we
recall basic definitions and results on partitions, tableaux,
symmetric functions, higher Lie modules, and the Carr\'e--Leclerc
formula for the Schur expansion of the plethysm \( h_2[s_\lambda] \) using
Yamanouchi domino tableaux. In \Cref{sec:hook-distinct}, we introduce
block-major indices and refined \( (\lambda,\mu) \)-Thrall subsets. We
prove that refined \( (\lambda,\mu) \)-Thrall subsets always exist,
and provide a method to construct refined \( (\lambda,\mu) \)-Thrall
subsets using those of the rectangular blocks in \( \lambda \). We also find
refined \( (\lambda,\mu) \)-Thrall subsets when \( \lambda \) is a
partition with distinct parts or a hook shape. In
\Cref{sec:equal-parts}, we find refined \( (\lambda,\mu) \)-Thrall
subsets when \( \lambda=(n,n) \), and generalize them to partitions in
which each part greater than \( 2 \) occurs at most twice. In
\Cref{sec:further-directions}, we propose open problems on arbitrary
rectangular shapes and on a direct description of the spin statistic.

\section{Preliminaries}\label{sec:preliminaries}

In this section, we review the basic definitions and known results
that will be used throughout this paper.

\subsection{Basic definitions}

For integers \( a \) and \( b \), let \( [a,b] \) be the set of
integers \( i \) with \( a\le i\le b \), and let \([a]:=[1,a]\). We
denote by \( S_n \) the set of permutations of \( [n] \).

A \emph{partition} \(\lambda=(\lambda_1,\dots,\lambda_r)\) of \(n\), denoted
\(\lambda\vdash n\), is a weakly decreasing sequence of positive integers
summing to \( n \). Each \(\lambda_i\) is called a \emph{part} and the
number of parts of \(\lambda\) is called the \emph{length} of \(\lambda\),
denoted by \(\ell(\lambda)\). We denote by \( m_i(\lambda) \) the number
of parts equal to \( i \) in \( \lambda \). For two partitions
\( \lambda \) and \( \mu \), we define \( \lambda\cup \mu \) to be the
partition with \( m_i(\lambda)+m_i(\mu) \) parts equal to \( i \) for
each \( i\ge1 \).

A \emph{cell} is a pair \( (i,j) \) of positive integers. The
\emph{Young diagram} of \( \lambda \) is the set
\(\{(i,j): 1\le i\le \ell(\lambda),\, 1\le j\le \lambda_i\}\). We will
identify the partition \(\lambda\) with its Young diagram. We draw
Young diagrams in English notation, with rows indexed from top to
bottom and columns from left to right. The \emph{conjugate} of \( \lambda \), denoted by \( \lambda' \), 
is the partition whose Young diagram
is \( \{(j,i):(i,j)\in \lambda\} \). For two partitions \( \lambda \)
and \( \mu \) with \( \lambda\subseteq\mu \), the \emph{skew shape}
\( \mu/\lambda \) is the set-theoretic difference \( \mu-\lambda \) of
their Young diagrams. A partition \( \mu \) is also considered as a
skew shape \( \mu/\emptyset \), called a \emph{straight shape}.

For any set \( A \) of cells, a \emph{tableau} \( T \) of shape
\( A \) is a function \( T:A\to \ZZ_{\ge1} \). The value \( T(i,j) \) is called the
\emph{entry} of the cell \( (i,j)\in A \). We denote by \( \sh(T) \) the shape \( A \) of the
tableau \( T \). We say that \( T \) is \emph{column-strict} if the
following conditions hold:
\begin{enumerate}
  \item If \( (i,j) \) and \( (i',j') \) are cells in \( \sh(T) \)
    with \( i\le i' \) and \( j\le j' \), then \( T(i,j)\le T(i',j') \).
  \item If \( (i,j) \) and \( (i',j) \) are cells in \( \sh(T) \)
    with \( i< i' \), then \( T(i,j)< T(i',j) \).
\end{enumerate}

A \emph{semistandard Young tableau} is a column-strict tableau whose
shape is a skew shape. We define the \emph{weight} of a semistandard Young
tableau \(T\) to be the sequence \(\wt(T)=(\wt_1(T),\wt_2(T),\dots)\),
where \(\wt_i(T)\) is the number of occurrences of \(i\) in \(T\). 
If \(T\) has \(n\) cells and each of the entries \(1,2,\dots,n\) appears exactly once, we call \(T\) a
\emph{standard Young tableau}. The set of semistandard Young tableaux (respectively,
standard Young tableaux) of shape \(\mu/\lambda\) is denoted by
\(\SSYT(\mu/\lambda)\) (respectively, \(\SYT(\mu/\lambda)\)). We also define
\[
\SYT(n):=\bigsqcup_{\mu\vdash n}\SYT(\mu).
\]

For a partition \(\mu\),
the \emph{complete homogeneous symmetric function}
\(h_\mu\) is defined by \(h_\mu=\prod_{i\ge1} h_{\mu_i}\), where
\(h_0=1\) and \(h_k=\sum_{i_1\le\cdots \le i_k}x_{i_1}\cdots x_{i_k}\)
for \(k\ge 1\). The \emph{power sum symmetric function} \(p_\mu\) is
defined by \(p_\mu=\prod_{i\ge1} p_{\mu_i}\), where \(p_0=1\) and
\(p_k =\sum_{i\ge1} x_i ^k\) for \(k\ge 1\). For a skew shape
\( \mu/\lambda \), the \emph{Schur function} \(s_{\mu/\lambda}\) is defined by
\[
s_{\mu/\lambda} =\sum_{T\in\SSYT(\mu/\lambda)}x^T,
\]
where \(x^T=\prod_{i\ge1} x_i ^{\wt_i(T)}\).

For a semistandard Young tableau \(S\) of any skew shape, the
\emph{standardization} \(\st(S)\) of \(S\) is the standard Young tableau
obtained from \( S \) by replacing the leftmost occurrence of the smallest entry by
\( 1 \), the leftmost occurrence of the smallest entry among the remaining entries by
\( 2 \), and so on. If \(T\) is a standard Young tableau of any skew
shape with \( n \) cells and \(I\subseteq [n]\) is a set of
consecutive integers, then \(T_I\) denotes the subtableau obtained by
restricting \(T\) to the entries in \(I\).

\begin{defn}
  For a column-strict tableau \( T \) of shape \( A \), a \emph{jeu de
    taquin slide} is an operation that interchanges a cell \( (i,j) \)
  not in \( A \) with a cell \( (i',j')\in \{(i+1,j),(i,j+1)\} \) in
  \( A \), carrying the entry of \( (i',j') \) with it, such that the resulting
  tableau is still column-strict. If \( T\in\SSYT(\mu/\lambda) \), the
  \emph{rectification} \( \rect(T) \) of \( T \) is defined to be the
  semistandard Young tableau of a straight shape obtained from \( T \)
  by applying jeu de taquin slides until this is no longer possible.
\end{defn}

The rectification \( \rect(T) \) is independent of the sequence of jeu
de taquin slides, and hence is well-defined
(cf.~\cite[A1.2]{EC2}). The following definition will be useful
in this paper.

\begin{defn}\label{def:1}
  For \( T\in \SYT(n) \) and a subset \( I \subseteq [n] \) of
  consecutive integers, we define
  \[
    T^{I} := \rect\bigl(\st(T_I)\bigr).
  \]
\end{defn}

Note that if \( |I|=k \), then \( T^I \in \SYT(k) \). In particular,
if \( I=[k] \), then \( T^I = T_I \).

For a standard Young tableau \(T\) with \( n \) cells, the \emph{descent set} of \( T \) is
\[
\Des(T):=\{\, i\in [n-1]:i+1 \text{ lies in a lower row than } i\,\},
\]
and its \emph{major index} is defined by 
\[
\maj(T):=\sum_{i\in \Des(T)} i.
\]

For a permutation \(\pi\in S_n\), we define 
\[
\Des(\pi):=\{\,i\in[n-1]:\pi(i)>\pi(i+1)\,\}, \qquad 
  \maj(\pi) := \sum_{i\in \Des(\pi)} i.
\]
If \(w\in S_n\) and \(\RSK(w)=(P(w),Q(w))\) is the Robinson--Schensted correspondence (cf. \cite[Section~7.11]{EC2}), then
\[
\Des(w)=\Des(Q(w)),
\qquad
\Des(w^{-1})=\Des(P(w)).
\]

If \(w=w_1\cdots w_n\) is a word and \(I\subseteq [n]\) is a subset of
consecutive integers, then \(w_I\) denotes the subword of \(w\) consisting
of the letters in \(I\), read in their original order. When \(w\) is a
permutation, \(\st(w_I)\) is the word obtained by replacing the letters
of \(I\) by \(1,2,\dots,|I|\) in increasing order.

We shall use the following standard subword property of the
Robinson--Schensted correspondence; cf. \cite[Section~7.11, A1.2]{EC2}.

\begin{lem}\label{lem:rsk-subword}
Let \(w\in S_n\), and let \(I\subseteq [n]\) be a set of consecutive integers. Then
\[
P\bigl(\st(w_I)\bigr) = P(w)^I.
\]
\end{lem}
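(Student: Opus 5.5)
The plan is to derive the lemma from two standard facts about the insertion tableau $P(w)$ and jeu de taquin. The first is the row-reading-word characterization. For any word $u$, $P(u)$ is the unique straight-shape tableau Knuth equivalent to $u$. Moreover, for a skew tableau $S$, $\rect(S)=P(\operatorname{read}(S))$, where $\operatorname{read}(S)$ is the row reading word, read left to right along rows from bottom to top. This holds because jeu de taquin slides preserve the Knuth class of the reading word (cf.~\cite[A1.2]{EC2}). The second fact is that Knuth equivalence is compatible with restricting to an interval of values. If $u\equiv v$ are permutations, or words in distinct letters, then $u_I\equiv v_I$ for every set $I$ of consecutive integers.

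\textbf{Step 1.} I would check the second fact by looking at a single elementary Knuth move, say $\dots yxz\dots \mapsto \dots yzx\dots$ with $x<y<z$. If all three of $x,y,z$ lie in $I$, the same move applies to the restricted words. If one of them lies outside $I$ while the others lie inside, the two letters in $I$ that actually move are $x$ and $z$, and they appear adjacent with swapped order. Because $I$ is an interval and $x<y<z$, we cannot have $x,z\in I$ with $y\notin I$. So at most one of $x$ and $z$ lies in $I$, and the restricted words coincide. The other Knuth relation, $xzy\leftrightarrow zxy$, is handled the same way, using that $I$ is convex. This convexity argument is the only real point of the proof, and it is exactly where the hypothesis that $I$ is consecutive is used.

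\textbf{Step 2.} Since $w\equiv \operatorname{read}(P(w))$, Step 1 gives $w_I\equiv \operatorname{read}(P(w))_I$. The restriction of the reading word of $P(w)$ to the letters in $I$ is precisely the reading word of the skew tableau $P(w)_I$. This subtableau has skew shape because the entries in $I$ form an interval: the cells with entries less than $\min I$ form a partition shape, and likewise for the entries at most $\max I$. Standardization commutes with restriction and with Knuth equivalence, since it is an order-preserving relabeling. Hence
\[
\st(w_I)\equiv \operatorname{read}\bigl(\st(P(w)_I)\bigr).
\]

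\textbf{Step 3.} Taking $P$ of both sides of Step 2 and applying the first fact gives
\[
P(\st(w_I))=\rect(\st(P(w)_I))=P(w)^I
\]
by \Cref{def:1}, which proves the lemma.

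The expected obstacle is purely bookkeeping in Step 1, namely checking the convexity case analysis for both Knuth relations. Everything else is a citation of standard results.
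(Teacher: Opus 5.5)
Your proof is correct. The paper gives no proof of this lemma; it only cites it as a standard property (EC2, Section~7.11 and Appendix~A1.2). Your route is the standard argument behind that citation: Knuth equivalence survives restriction to an interval of values, $w\equiv\operatorname{read}(P(w))$, and $\rect(S)=P(\operatorname{read}(S))$ for a skew tableau $S$.

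The case split in Step~1 reads a little muddled. It is cleaner to state it as a dichotomy. Either $x,z\in I$, which forces $y\in I$ by convexity, so the same Knuth move applies to the restricted words. Or at most one of $x,z$ lies in $I$, so the restricted words are identical.
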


The characters of higher Lie modules can be expressed in terms of
plethysm. We briefly recall the notion of plethysm; see
\cite[p.~135]{Macdonald} for more details.

Let \(\Lambda_{\QQ}\) be the ring of symmetric functions over \(\QQ\).
For \(f\in \Lambda_{\QQ}\), let
\(\varphi_f:\Lambda_{\QQ}\to \Lambda_{\QQ} \) be the unique
\(\QQ\)-algebra homomorphism such that
\(\varphi_f(p_r)=p_r[f]:=f(x_1^r,x_2^r,\dots) \) for all \( r\ge1 \).
For \(g\in \Lambda_{\QQ}\), we define the \emph{plethysm} \(g[f]\) by
\(g[f]:=\varphi_f(g) \). Note that \( f[p_1] = f \) for any
\( f\in \Lambda_\QQ \).

By the definition \eqref{eq:L_la(V)} of the higher Lie module
\( \LL_\lambda(V) \), we have
\begin{equation}\label{eq:ch(L_la)}
\ch(L_\lambda) = \prod_{i\ge1} h_{m_i(\lambda)}[\ch(L_i)].
\end{equation}

\subsection{Useful results}

For the rest of this section, we review some known results that will
be used throughout the paper.

Given a subset \(D\) of \([n-1]\), we define the \emph{fundamental quasisymmetric functions} \(F_{n, D}\) by 
\[
F_{n,D}=\sum_{\substack{i_1\le \cdots \le i_n \\ i_j <i_{j+1} \text{ if } j\in D}} x_{i_1}\cdots x_{i_n}.
\]
Then for \(\mu\vdash n\), the Schur function \(s_\mu\) expands in the
fundamental basis as
\begin{equation}\label{eq:schur-fundamental}
 s_\mu = \sum_{T\in\SYT(\mu)} F_{n,\Des(T)} .
\end{equation}
Combining \Cref{thm:KW} with \eqref{eq:schur-fundamental} and applying
the Robinson--Schensted correspondence, we obtain
\begin{equation}\label{eq:KW2}
\ch(L_n) = \sum_{\substack{\pi\in S_n \\\maj(\pi^{-1}) \equiv 1 \pmod{n}}} F_{n,\Des(\pi)}.
\end{equation}

For \(\pi\in S_n\) and \(\sigma\in S_m\), we write \(\shuffle(\pi,\sigma)\) for the set of permutations in \(S_{n+m}\) obtained by shuffling the words
\( \pi(1)\cdots \pi(n) \) and
\(n+\sigma(1),\dots,n+\sigma(m) \)
while preserving the relative order inside each word. We shall use the identity
\begin{equation}\label{eq:shuffle}
  F_{n,\Des(\pi)} F_{m,\Des(\sigma)}
  = \sum_{w\in \shuffle(\pi,\sigma)} F_{n+m,\Des(w)}
\end{equation}
to multiply \(\ch(L_n)\)'s later. 

We recall the Carr\'e--Leclerc formula for the symmetric
square \( h_2[s_{\lambda}] \) of a Schur function.

A \emph{domino} is a set of two adjacent cells. More precisely, a
\emph{vertical domino} is a set of the form \( \{(i,j),(i+1,j)\} \)
and a \emph{horizontal domino} is a set of the form
\( \{(i,j),(i,j+1)\} \). 
A \emph{domino tableau} of shape \(\mu/\lambda\) is a tiling of the Young
diagram of \(\mu/\lambda\) by dominoes, each labeled by a positive
integer. We regard the label of a domino as occupying both of its cells.
The labels are required to be weakly increasing along rows and strictly
increasing down columns.
The \emph{weight} of a domino tableau \( D \) is the sequence
\( (d_1,d_2,\dots) \), where \( d_i \) is the number of dominoes with label
\( i \).

A word \(w=w_1\cdots w_n\) of positive integers is called
\emph{Yamanouchi} if every suffix \(w_k\cdots w_n\) contains at least
as many letters equal to \(i\) as letters equal to \(i+1\), for each
\(i\ge 1\). The \emph{column reading word} of a domino tableau is 
the word obtained by reading the labels column by column, from bottom to top and from left to
right, with each horizontal domino read only once from its leftmost cell.
A \emph{Yamanouchi domino tableau} is a domino tableau whose
column reading word is Yamanouchi. We denote by
\( \YDT(\lambda,\mu) \) the set of Yamanouchi domino tableaux of shape
\( \lambda \) and weight \( \mu \).

\begin{defn}
Let \(D \in \YDT(\lambda,\mu)\). We denote by \(H(D)\) the number of horizontal dominoes of \(D\), and by \(V(D)\) the number of vertical dominoes of \(D\). We define the \emph{spin} of \(D\) to be
\[
\spin(D) = \frac{V(D)}{2}.
\]
\end{defn}

For a partition \( \lambda=(\lambda_1,\lambda_2,\dots) \), we define
\[
\lambda^\square := (2\lambda_1,2\lambda_1,2\lambda_2,2\lambda_2,\dots).
\]
Carr\'e and Leclerc showed the following combinatorial formula for the
Schur expansion of \( h_2[s_{\lambda}] \).

\begin{thm}\cite[Corollary~5.5]{Carre1995}\label{thm:CL}
For \(\lambda\vdash n\), we have
\[
h_2[s_{\lambda}]=\sum_{\mu \vdash 2n} |\{D\in\YDT(\lambda^\square,\mu): \spin(D) \equiv n \pmod{2}\}|\, s_{\mu}.
\]
\end{thm}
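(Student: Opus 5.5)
The identity would follow by splitting the plethysm into its symmetric and alternating parts,
\[
h_2[s_\lambda]=\tfrac12\bigl(s_\lambda^2+p_2[s_\lambda]\bigr),
\]
and reading off each part from Yamanouchi domino tableaux of shape \(\lambda^\square\). Write
\[
a_\mu:=|\YDT(\lambda^\square,\mu)|,\qquad b_\mu:=\sum_{D\in\YDT(\lambda^\square,\mu)}(-1)^{\spin(D)}.
\]
Then the number of \(D\) with \(\spin(D)\equiv n \pmod 2\) equals \(\tfrac12\bigl(a_\mu+(-1)^n b_\mu\bigr)\). So it suffices to prove two identities:
\[
\langle s_\lambda^2,s_\mu\rangle=a_\mu,
\qquad
\langle p_2[s_\lambda],s_\mu\rangle=(-1)^n b_\mu.
\]
Parity is not an issue: every row of \(\lambda^\square\) has even length, so the number of vertical dominoes is even and \(\spin(D)\) is always an integer.

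\textbf{Step 1: the unsigned count.} Consider the domino generating function
\[
G_\nu=\sum_{T}x^T,
\]
where the sum runs over all domino tableaux \(T\) of shape \(\nu\) and \(x^T\) records the domino weight. Via the Stanton--White bijection with pairs of semistandard tableaux on the \(2\)-quotient, one has \(G_\nu=s_{\nu^{(0)}}s_{\nu^{(1)}}\) whenever \(\nu\) has empty \(2\)-core. For \(\nu=\lambda^\square\) the \(2\)-core is empty and the \(2\)-quotient is \((\lambda,\lambda)\), so \(G_{\lambda^\square}=s_\lambda^2\). Next, the standard crystal/Yamanouchi argument applies to domino tableaux with the column reading word: a Schensted-type insertion (Barbasch--Vogan/Garfinkle, or van Leeuwen's domino insertion) is compatible with Knuth relations on the reading word. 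Hence \(\langle G_{\lambda^\square},s_\mu\rangle\) equals the number of domino tableaux of weight \(\mu\) whose reading word is Yamanouchi. This gives \(a_\mu=\langle s_\lambda^2,s_\mu\rangle\).

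\textbf{Step 2: the signed count.} Introduce the spin-graded version
\[
G_\nu(q)=\sum_{T}q^{\spin(T)}x^T,
\]
which is the Lascoux--Leclerc--Thibon ribbon function for \(2\)-ribbons. I would show that the Yamanouchi argument of Step 1 survives the grading. The reason is that the crystal operators on domino tableaux change labels without changing the tiling, so they preserve \(\spin\). Consequently
\[
G_\nu(q)=\sum_\mu\Bigl(\sum_{D\in\YDT(\nu,\mu)}q^{\spin(D)}\Bigr)s_\mu .
\]
Setting \(q=-1\), I would then use the known specialisation \(G_\nu(-1)=\pm\,\psi(s_\nu)\), where \(\psi\) is the adjoint of the operator \(f\mapsto p_2[f]\) (sign-twisted). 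This specialisation can be proved by a sign-reversing involution pairing domino tableaux that differ by flipping a \(2\times2\) square of two horizontal dominoes into two vertical dominoes with equal labels. The fixed points of the involution reproduce Littlewood's formula
\[
\langle p_2[s_\lambda],s_\mu\rangle=\sigma_2(\mu)\,\bigl\langle s_\lambda,\,s_{\mu^{(0)}}s_{\mu^{(1)}}\bigr\rangle,
\]
where \(\sigma_2\) is the \(2\)-sign. Combining this with the duality between shape \(\lambda^\square\) and weight \(\mu\), and tracking signs, should yield \(b_\mu=(-1)^n\langle p_2[s_\lambda],s_\mu\rangle\).

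\textbf{Main obstacle.} I expect the hardest step to be the last one in Step 2: matching the sign \((-1)^{\spin(D)}\) against the \(2\)-sign \(\sigma_2(\mu)\) and producing the global factor \((-1)^n\). My first attempt would be to check it on the minimal tableau, the one whose columns are all vertical dominoes (the highest-weight case). There one sees directly that \(\spin=\lambda_1+\lambda_2+\cdots\) adjusted by \(n\). I would then argue that the involution only ever changes \(V(D)\) by multiples of \(4\), so the sign is constant on each class and the case check propagates to all \(D\).
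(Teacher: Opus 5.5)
\textbf{Verdict: there is a genuine gap.} The paper does not prove this statement. It cites Carr\'e--Leclerc and only proves, in \Cref{lem:spin-horizontal}, that their condition $H(D)\equiv 0\pmod 4$ is equivalent to $\spin(D)\equiv n\pmod 2$; your parity remark matches that lemma. Your route is a reconstruction. Write $G(q)=\sum_T q^{\spin(T)}x^T$, summed over domino tableaux of shape $\lambda^\square$. You need three facts:
(a) $G(q)=\sum_\mu\bigl(\sum_{D\in\YDT(\lambda^\square,\mu)}q^{\spin(D)}\bigr)s_\mu$;
(b) $G(1)=s_\lambda^2$;
(c) $G(-1)=(-1)^n p_2[s_\lambda]$.
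Then $h_2[s_\lambda]=\tfrac12\bigl(G(1)+(-1)^nG(-1)\bigr)$ gives the statement. Fact (b) is fine by Stanton--White, but (a) and (c) are not established.

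\textbf{Gap in (a).} The reason you give is false: crystal operators cannot act on the labels while keeping the tiling fixed. The labellings of a single tiling need not have a symmetric generating function. In shape $(2,2,2,2)$, take two vertical dominoes in rows $1$--$2$ and horizontal dominoes in rows $3$ and $4$. The labels are forced to satisfy $a\le b<c<d$, so the generating function is $F_{4,\{2,3\}}$. This tiling admits a labelling of weight $(2,1,1)$ but none of weight $(1,2,1)$.

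Concretely, the Yamanouchi tableau with labels $1,1,2,3$ has reading word $3211$. Applying $f_1$ gives $3212$, which is the reading word of a tableau with a different tiling: a horizontal $1$ in row $1$, vertical $2$'s in rows $2$--$3$, and a horizontal $3$ in row $4$. Spin happens to survive here. But proving that spin is constant on coplactic classes even though the tiling moves is precisely the substance of Carr\'e--Leclerc's work (see also van Leeuwen). Likewise, the ``Knuth compatibility'' you invoke in Step~1 is an unproved appeal to the same result, and Step~1 is just (a) at $q=1$. So the core of the theorem is assumed, and the justification offered for it is incorrect.

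\textbf{Gap in (c).} You have confused $f\mapsto p_2[f]$ with its adjoint $\varphi_2$. Littlewood and Stanton--White give $\varphi_2(s_\nu)=\pm s_{\nu^{(0)}}s_{\nu^{(1)}}=\pm G_\nu(1)$, not $\pm G_\nu(-1)$. For example, with $\nu=(2,2)$ we have $G_\nu(q)=qh_2+e_2$, so $G_\nu(-1)=-p_2$, whereas $\varphi_2(s_{(2,2)})=s_1^2$.

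What you actually need is (c) itself; for $\lambda=(2)$ both sides equal $s_4-s_{31}+s_{22}$. The natural approach is a sign-reversing involution that changes $V$ by exactly $2$. Its surviving tableaux should be the tilings of $\lambda^\square$ by aligned $2\times 2$ blocks, each made of two vertical dominoes with equal labels. These correspond to semistandard tableaux of shape $\lambda$ with squared weights, all have $\spin=n$, and contribute $(-1)^n s_\lambda(x_1^2,x_2^2,\dots)$.

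Building this involution takes care:
\begin{itemize}
\item flippable squares need not be aligned, e.g.\ the tiling of $(4,4)$ by a vertical, two stacked horizontals, and a vertical;
\item two stacked horizontal dominoes can never carry equal labels, contrary to your description.
\end{itemize}
Once (c) holds, $b_\mu=(-1)^n\langle p_2[s_\lambda],s_\mu\rangle$ follows by taking coefficients. Littlewood's formula and $\sigma_2(\mu)$ play no role, so your ``main obstacle'' is an artifact of the detour. Your proposed fix is also inconsistent: an involution that changes $V$ only by multiples of $4$ preserves the sign and therefore cancels nothing.
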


We note that in \cite[Corollary~5.5]{Carre1995} the result is stated
using the condition \( H(D) \equiv 0 \pmod 4 \) in place of
\( \spin(D) \equiv n \pmod 2 \). These two conditions are easily seen
to be equivalent. For completeness, we include the proof.

\begin{lem}\label{lem:spin-horizontal}
  Let \(\lambda\vdash n\), \(\mu\vdash 2n\), and
  \(D \in \YDT(\lambda^\square,\mu)\). Then
  \( H(D) \equiv 0 \pmod 4 \) if and only if
  \( \spin(D) \equiv n \pmod 2 \).
\end{lem}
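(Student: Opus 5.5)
The plan is a direct count of cells. The shape $\lambda^\square$ has $|\lambda^\square| = 2\cdot 2\lambda_1+2\cdot 2\lambda_2+\cdots = 4n$ cells. A domino tiling of it uses $H(D)+V(D)$ dominoes covering two cells each, so
\[
2H(D)+2V(D)=4n, \qquad\text{that is,}\qquad H(D)+V(D)=2n .
\]
The whole argument rests on this identity, together with one parity fact about $V(D)$.

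First we check that $V(D)$ is even, so that $\spin(D)=V(D)/2$ is an integer. I would argue by coloring: color cell $(i,j)$ by the parity of $j$. Then a horizontal domino covers one cell of each color, while a vertical domino covers two cells of the same color.

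Next we count colors in $\lambda^\square$. Each row has even length $2\lambda_k$, so each row contains equally many cells of each color, namely $\lambda_k$. Let $V_{\mathrm{odd}}$ and $V_{\mathrm{even}}$ be the numbers of vertical dominoes lying in odd and even columns. The odd-colored cells are covered by $H(D)$ cells from horizontal dominoes and $2V_{\mathrm{odd}}$ cells from vertical ones, and the same holds for the even-colored cells. Equating the two counts gives $V_{\mathrm{odd}}=V_{\mathrm{even}}$, hence $V(D)=2V_{\mathrm{odd}}$ is even.

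Substituting $V(D)=2\spin(D)$ into the cell count gives $H(D)=2n-2\spin(D)=2\bigl(n-\spin(D)\bigr)$. Therefore $H(D)\equiv 0 \pmod 4$ if and only if $n-\spin(D)$ is even, that is, $\spin(D)\equiv n \pmod 2$, which is the claim.

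There is no real obstacle here. The only point needing care is the integrality of $\spin(D)$, which is why the coloring step comes first. The Yamanouchi condition and the weight $\mu$ play no role, so the argument holds for any domino tiling of $\lambda^\square$.
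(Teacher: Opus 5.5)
Your proof is correct. Its overall structure matches the paper's: show that $V(D)$ is even, use $H(D)+V(D)=2n$, and conclude that $H(D)=2(n-\spin(D))$. The difference is in how you show that $V(D)$ is even.

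The paper works row by row. Let $a_r$ be the number of vertical dominoes occupying rows $r$ and $r+1$. The cells of row $r$ that lie in vertical dominoes number $a_{r-1}+a_r$. This number is even, because the row length is even and horizontal dominoes cover an even number of cells in each row. Starting from $a_0=0$, this forces every $a_r$ to be even.

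You instead color cells by column parity and compare the two color classes globally. This gives $V_{\mathrm{odd}}=V_{\mathrm{even}}$ in a single count, with no induction.

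The two routes give slightly different by-products. The paper's route yields the finer local fact that the vertical dominoes straddling each pair of consecutive rows come in even numbers. Yours yields that the vertical dominoes are split equally between odd and even columns. Either suffices here, and both use only that every row of $\lambda^\square$ has even length. So, as you observe, neither the Yamanouchi condition nor the weight plays any role.
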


\begin{proof}
  Let \(a_r\) be the number of vertical dominoes of \(D\) with cells
  in rows \(r\) and \(r+1\). Since every row of \(\lambda^\square\)
  has even length, each row contains an even number of cells belonging
  to vertical dominoes. Thus \(a_{r-1}+a_r\equiv 0 \pmod 2 \) for all
  \( r\ge1 \). Since \(a_0=0\), every \(a_r\) is even, and therefore
  \(V(D) \) is even. Hence \(\spin(D)=V(D)/2\in \ZZ\).

  The shape \(\lambda^\square\) has size \(4n\), so every domino
  tiling of this shape consists of \(2n\) dominoes. Hence
  \(H(D)+V(D)=2n\), and therefore \(H(D)=2(n-\spin(D)) \). Thus
  \(H(D)\equiv 0\pmod 4\) if and only if \(\spin(D)\equiv n\pmod 2\).
\end{proof}

We use the tableau switching map introduced in \cite{BSS1996}. 

\begin{defn}\label{def:tableau-switching-explicit}
  Let \( T_1 \) and \( T_2 \) be column-strict tableaux of shapes
  \( A_1 \) and \( A_2 \), respectively, where \( A_1 \) and \( A_2 \)
  are any disjoint sets of cells. A \emph{switch} is an operation that
  interchanges a cell \( (i,j) \) in \( A_1 \) with a cell
  \( (i',j')\in \{(i+1,j),(i,j+1)\} \) in \( A_2 \), carrying their entries with them,
  such that the resulting tableaux \( T_1 \) and \( T_2 \) are still
  column-strict.

  Let \( S\in \SSYT(\lambda/\nu) \) and \( T\in\SSYT(\mu/\lambda) \),
  where \( \nu\subseteq\lambda\subseteq\mu \) are partitions.
  The \emph{tableau switching} map \( X \) is defined by
  \( X(S,T) = (T',S') \), where \( S' \) and \( T' \) are the tableaux
  obtained from \( S \) and \( T \), respectively, by applying
  switches until this is no longer possible.
\end{defn}

See \Cref{fig:2} for an illustration of this procedure.

\begin{figure}
  \centering
  \begin{align*}
\begin{ytableau}
*(gray!30)1 &*(gray!30) 2 &*(gray!30) 3 & 2 & 4 \\
*(gray!30)4 &*(gray!30) 5 & 5 \\
1 & 3
\end{ytableau}
 \quad &\rightarrow \quad
 \begin{ytableau}
*(gray!30)1 &*(gray!30) 2 & *(gray!30)3 & 2 & 4 \\
*(gray!30)4 & 3 & 5 \\
1 & *(gray!30)5
\end{ytableau}
 \quad \rightarrow \quad
 \begin{ytableau}
*(gray!30)1 & *(gray!30)2 & *(gray!30)3 & 2 & 4 \\
1 & 3 & 5 \\
*(gray!30)4 & *(gray!30)5
\end{ytableau}\\
    & \\
 &\rightarrow \quad 
 \begin{ytableau}
*(gray!30)1 & *(gray!30)2 & 2 & 4 &*(gray!30) 3 \\
1 & 3 & 5 \\
*(gray!30)4 & *(gray!30)5
\end{ytableau}
 \quad \rightarrow \quad
 \begin{ytableau}
*(gray!30)1 & 2 & 4 & *(gray!30)2 & *(gray!30)3 \\
1 & 3 & 5 \\
*(gray!30)4 & *(gray!30)5
\end{ytableau}
 \quad \rightarrow \quad
 \begin{ytableau}
1 & 2 & 4 & *(gray!30)2 & *(gray!30)3 \\
3 & 5 & *(gray!30)1 \\
*(gray!30)4 & *(gray!30)5
\end{ytableau}
  \end{align*}
  \caption{An example of \( X(S,T)=(T',S') \).
    Cells originating from \( S \) are colored gray.}
  \label{fig:2}
\end{figure}

As in the case of rectification, tableau switching does not depend on the
sequence of switches. We shall use the following standard properties
of tableau switching; see \cite[Section~2]{vanLeeuwen2001} and
\cite{BSS1996}.

\begin{prop}\label{prop:ts}
  Suppose \(X(S,T)=(T',S') \), where \( S\in \SSYT(\lambda) \) and
  \( T\in \SSYT(\mu/\lambda) \). Then \(X(T',S')=(S,T) \), that is, the
  tableau switching map is involutive. Moreover, we have
  \( \rect(T)=T'\) and \(\rect(S')=S\).
\end{prop}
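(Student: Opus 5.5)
Since \Cref{prop:ts} is quoted from \cite{BSS1996} and \cite[Section~2]{vanLeeuwen2001}, I would reconstruct its proof from the elementary description of a switch rather than from any global structure. The key observation is this: when the inner tableau \(S\) has straight shape \(\lambda\), the switches in \(X(S,T)\) can be organized as a sequence of jeu de taquin slides on \(T\). I would process the cells of \(S\) in the reverse of reading order, by decreasing entry and, among equal entries, from right to left. Each cell of \(S\) that is moved into the outer part acts as an ``empty'' cell for \(T\), and the switches it performs against \(T\) are exactly a jeu de taquin slide of \(T\) into that cell. I would check that column-strictness of both tableaux is preserved at each step, using the usual case analysis of the two candidate neighbors \((i+1,j)\) and \((i,j+1)\). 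This follows the standard proof that jeu de taquin is well defined.

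Once \(S\) has been fully passed through, \(T\) has undergone a complete sequence of slides into the \(|\lambda|\) cells of the straight shape \(\lambda\), so its final shape is a straight shape. Because rectification does not depend on the order of slides (\cite[A1.2]{EC2}), this gives \(T'=\rect(T)\). Symmetrically, read backwards, the same process shows that each cell of \(S\) travels along the reverse path of a slide. The entries of \(S\) therefore end up forming \(S'\), and the switch sequence taken in reverse is a sequence of jeu de taquin slides of \(S'\) back into straight shape.

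For the involution, I would use the local reversibility of a single switch. If a switch exchanges a cell of \(A_1\) with a neighboring cell of \(A_2\) and both tableaux remain column-strict, then the reverse exchange is a legal switch of the pair \((T',S')\), now with the roles of the inner and outer tableaux reversed. Reversing the entire sequence of switches therefore carries \((T',S')\) back to \((S,T)\). The remaining point is that no further switches are possible at the end, which follows because \(S\) and \(T\) occupy \(\lambda\) and \(\mu/\lambda\). By independence of the order of switches, this reverse sequence computes \(X(T',S')\), so \(X(T',S')=(S,T)\). Combining this with the first part applied to \((T',S')\), where \(T'\) is now the straight-shaped inner tableau, gives \(\rect(S')=S\).

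The main obstacle is the claim that the result of tableau switching does not depend on the order of switches. The involution argument relies on it, because it needs the reversed sequence to be a valid way of computing \(X(T',S')\). I would not reprove this claim; I would cite it from \cite{BSS1996}, as the paper does. What remains to verify is routine local bookkeeping: that the chosen ordering realizes honest jeu de taquin slides, and that ties among equal entries of \(S\) are handled so that \(S'\) stays column-strict.
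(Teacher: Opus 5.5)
The paper gives no proof of this proposition; it quotes it from \cite{BSS1996} and \cite[Section~2]{vanLeeuwen2001}. Your reconstruction is correct and reduces everything to one cited fact, the order-independence of switching, which the paper also asserts. From that fact you derive the three claims:
\begin{enumerate}
\item A particular switch order realizes \(X(S,T)\) as successive jeu de taquin slides of \(T\) into the cells of \(\lambda\), which gives \(T'=\rect(T)\).
\item Each switch, reversed, is a legal switch for the pair \((T',S')\). The configuration \((S,T)\) is terminal for that pair, because no cell of \(\lambda\) lies just below or to the right of a cell of \(\mu/\lambda\). Hence \(X(T',S')=(S,T)\).
\item The first claim, applied to the straight-shaped inner tableau \(T'\), then gives \(\rect(S')=S\).
\end{enumerate}
Compared with the paper's bare citation, this buys a transparent reason for each claim and isolates exactly what is deep.

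The one check you defer is that \(S\) stays column-strict, and it is worth pinning down. Slides into the cells of a horizontal strip, taken from right to left, have vacated cells that move strictly left and weakly down. Since a slide path moves only right or down, the moving entry therefore never sits directly above an equal entry pushed out earlier. Unprocessed equal entries lie in columns strictly left of the starting cell. Entries pushed out earlier never lie weakly northwest of the hole, because the cells of \(T\), the hole, and the unprocessed part of \(S\) always form a partition shape.

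Two sentences should be corrected.
\begin{itemize}
\item The phrase ``the reverse of reading order'' does not describe the order you then specify, which is decreasing entry and right to left among ties, i.e.\ the reverse of the standardization order. It is not even a valid sequence of corner removals: for \(\lambda=(2,2)\) it begins with \((1,2)\), which is not a removable corner. Keep only the second description.
\item The reversed switch sequence is not a sequence of jeu de taquin slides of \(S'\). In it, one entry of \(S\) at a time travels back along its whole path while \(T'\) undergoes reverse slides. Drop that sentence; you do not need it, since you correctly obtain \(\rect(S')=S\) from the involution together with the first claim.
\end{itemize}
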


For a partition \(\lambda\), we denote by \(1_\lambda\) the unique
semistandard Young tableau of shape \(\lambda\) and weight \(\lambda\)
whose entries in row \(i\) are all equal to \(i\).

The \emph{reading word} of a semistandard Young tableau \(T\) is the
word obtained by reading each row from left to right, starting from
the bottom row and proceeding upward. A \emph{Littlewood--Richardson
  tableau} of shape \( \mu/\lambda \) and weight \(\nu\) is a
semistandard Young tableau of shape \( \mu/\lambda \) and weight \(\nu\)
whose reading word is Yamanouchi. We denote by \(\LR(\mu/\lambda,\nu) \)
the set of Littlewood--Richardson tableaux of shape \( \mu/\lambda \) and
weight \(\nu\).

For \(U\in \SYT(\lambda)\), we define
\[
  \SYT^{U}(\mu/\lambda) = \{ S\in \SYT(\mu/\lambda):\rect(S)=U \}.
\]
The following lemma is a consequence of van Leeuwen’s result
\cite[Corollary~2.5.2]{vanLeeuwen2001}. See \Cref{def:Psi_U} for a
detailed description of \(\Psi_U\).

\begin{lem}\label{lem:lr-rectification}
Fix \(U\in \SYT(\lambda)\). Then there is a bijection
\[
\Psi_U: \SYT^{U}(\mu/\lambda)
\to
\LR(\mu/\lambda,\lambda).
\]
\end{lem}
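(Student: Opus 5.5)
We build \(\Psi_U\) out of tableau switching, using \Cref{prop:ts} to get the inverse. Let \(S\in\SYT^{U}(\mu/\lambda)\) and form the pair \((1_\lambda,S)\). Here \(1_\lambda\in\SSYT(\lambda)\) and \(S\) is a standard, hence semistandard, tableau of shape \(\mu/\lambda\). We apply the switching map:
\[
X(1_\lambda,S)=(S',T'),\qquad S'\in\SYT(\lambda),\quad T'\in\SSYT(\mu/\lambda),
\]
and set \(\Psi_U(S):=T'\).

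By \Cref{prop:ts}, \(S'=\rect(S)=U\) and \(\rect(T')=1_\lambda\). Switching preserves the multiset of entries carried by each tableau, so \(T'\) has weight \(\lambda\). It remains to see that \(T'\) is Littlewood--Richardson. We use the classical fact that a semistandard skew tableau of weight \(\lambda\) rectifies to \(1_\lambda\) if and only if its reading word is Yamanouchi. This holds because jeu de taquin preserves Knuth classes, and a word is Knuth equivalent to the reading word of \(1_\lambda\) exactly when it is Yamanouchi of content \(\lambda\). Hence \(T'\in\LR(\mu/\lambda,\lambda)\).

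For the inverse, take \(T\in\LR(\mu/\lambda,\lambda)\). The same fact gives \(\rect(T)=1_\lambda\). We switch the pair \((U,T)\) and get \(X(U,T)=(T'',S'')\). By \Cref{prop:ts}, \(T''=\rect(T)=1_\lambda\) and \(\rect(S'')=U\), so \(S''\in\SYT^U(\mu/\lambda)\). Define \(\Phi_U(T)=S''\).

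Since \(X\) is an involution, \(X(1_\lambda,S)=(U,T')\) implies \(X(U,T')=(1_\lambda,S)\), so \(\Phi_U(\Psi_U(S))=S\). Likewise \(X(U,T)=(1_\lambda,S'')\) implies \(X(1_\lambda,S'')=(U,T)\), so \(\Psi_U(\Phi_U(T))=T\). Thus \(\Psi_U\) is a bijection.

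The main obstacle is the characterization of the LR tableaux as exactly the weight-\(\lambda\) tableaux that rectify to \(1_\lambda\). If a clean reference is wanted, we would cite van Leeuwen \cite[Corollary~2.5.2]{vanLeeuwen2001} or \cite[A1.3]{EC2}. A minor point is that \Cref{prop:ts} is stated for semistandard tableaux with \(S\) of straight shape. Here the inner tableaux \(U\) and \(1_\lambda\) have straight shape \(\lambda\), so it applies directly.
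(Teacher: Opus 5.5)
Your proof is correct, and since the lemma only asserts that some bijection exists, it is complete. However, it builds a genuinely different map from the paper's and justifies it differently.

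You switch \(1_\lambda\) through \(S\) once. Bijectivity then follows directly from the involutivity in \Cref{prop:ts}, together with the classical fact that a weight-\(\lambda\) skew tableau is Littlewood--Richardson exactly when it rectifies to \(1_\lambda\).

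The paper's \(\Psi_U\) in \Cref{def:Psi_U} takes two steps. It first switches \(Q_\lambda\) through \(S\), producing \(B\) with \(\rect(B)=Q_\lambda\), and then switches \(1_\lambda\) through \(B\). It cites van Leeuwen for both the LR property and bijectivity.

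The two maps really disagree. In \Cref{exam:psi-running}, your \(\Psi_U(S)\) has rows \(1,1\,/\,1\,/\,2,2\); it is \(B\) with \(1,2,3\mapsto 1\) and \(4,5\mapsto 2\). The paper's has rows \(1,1\,/\,2\,/\,1,2\).

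The paper's extra switch buys canonicity. It makes \(\st(\Psi_U(S))\) the unique tableau dual equivalent to \(S\) that rectifies to \(Q_\lambda\), which yours is not in general.

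Your route buys a self-contained argument. It even proves that the paper's map is bijective without van Leeuwen: that map is your map for \(Q_\lambda\) composed with the switching bijection \(S\mapsto B\) from \(\SYT^{U}(\mu/\lambda)\) to \(\SYT^{Q_\lambda}(\mu/\lambda)\).

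The specific \(\Psi_U\) does matter downstream, because it feeds into \Cref{def:xi}. Substituting yours keeps \Cref{lem:xi} and \Cref{thm:nn-final-single-sum} true. It changes the spin statistic, however, and hence the actual set \(\SYT^{\spin}_{(n,n)}(\mu)\), though not its size. For instance, \(\YDT((6,6,4,4),(5,3,2))\) has exactly two elements, of spins \(3\) and \(2\). With your map, the tableau \(T\in\SYT^{=}(10)\) in the example after \Cref{def:xi} would get spin \(2\) rather than \(3\).
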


\section{Refined Thrall subsets and decomposition into rectangles}\label{sec:hook-distinct}

In this section, motivated by the result of Kra\'skiewicz--Weyman, we
introduce the refined \( (\lambda,\mu) \)-Thrall subsets. The key tool
is the notion of block-major indices, which are refinements of the
major index of a standard Young tableau. We provide a combinatorial
description of $\ch(L_\lambda)$ when \( \lambda \) has distinct parts
by finding refined \( (\lambda,\mu) \)-Thrall subsets. We show that
for any partitions \( \lambda \) and \( \mu \), such subsets always
exist. We also give a method to construct such a subset if refined
Thrall subsets for the rectangular blocks of \( \lambda \) are known.
As an application, we obtain a combinatorial description of
$\ch(L_\lambda)$ when \( \lambda \) is a hook.

We begin by introducing the block decomposition and the associated
major index statistics.

\begin{defn}\label{def:9}
Let $\lambda=(\lambda_1,\ldots,\lambda_r)\vdash n$. For $1\le i\le r$, we write
\[
B_{\lambda,i}=[\lambda_1+\cdots+\lambda_{i-1}+1,\lambda_1+\cdots+\lambda_i].
\]
For $T\in\SYT(n)$, define
\[
\Des_{\lambda,i}(T)=\{\, d-(\lambda_1+\cdots+\lambda_{i-1}) :d\in \Des(T), \, d,d+1\in B_{\lambda,i} \}.
\]
The \emph{$i$-th block-major index of $T$ with respect to $\lambda$},
denoted \( \maj_{\lambda,i}(T) \), is defined by
\[
\maj_{\lambda,i}(T)=\sum_{j\in \Des_{\lambda,i}(T)} j.
\]
\end{defn}

Recall the coefficient \( c_{\lambda,\mu} \) of \( s_\mu \) in the
Schur expansion of \( \ch(L_\lambda) \) in \eqref{eq:4}. We now
introduce the refined \( (\lambda,\mu) \)-Thrall subsets.

\begin{defn}\label{def:2}
  Let \( \lambda \) and \( \mu \) be partitions of \( n \). We define
  \[
    \SYT_\lambda(\mu) := \left\{T\in \SYT(\mu) :
      \maj_{\lambda,i}(T)\equiv 1 \pmod{\lambda_i} \text{ for all }i \right\}.
  \]
  A \emph{refined \( (\lambda,\mu) \)-Thrall subset} is a subset of
  \( \SYT_\lambda(\mu) \) whose cardinality is equal to
  \( c_{\lambda,\mu} \).
\end{defn}

Note that a refined \( (\lambda,\mu) \)-Thrall subset is a
\( (\lambda,\mu) \)-Thrall subset. In order to show that a refined
\( (\lambda,\mu) \)-Thrall subset always exists, we need some lemmas.

\begin{lem}\label{lem:rect-des}
  Let $\lambda=(\lambda_1,\ldots,\lambda_r)\vdash n$. For $T\in \SYT(n)$ and $1\le i\le r$,
  we have
  \[
    \Des_{\lambda,i}(T) = \Des(T^{B_{\lambda,i}}).
  \]
  Consequently,
  \[
    \maj_{\lambda,i}(T) = \maj(T^{B_{\lambda,i}}).
  \]
\end{lem}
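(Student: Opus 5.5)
The plan is to translate the statement about tableaux into a statement about permutations, using the Robinson--Schensted correspondence and \Cref{lem:rsk-subword}. In the permutation setting, descents of inverses are simply relative positions of consecutive letters, and those positions survive restriction to a subword.

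Write \(B=B_{\lambda,i}=[a+1,a+k]\), where \(a=\lambda_1+\cdots+\lambda_{i-1}\) and \(k=\lambda_i\). Fix \(T\in\SYT(n)\) and choose any \(w\in S_n\) with \(P(w)=T\). Such a \(w\) exists, for instance by applying \(\RSK^{-1}\) to the pair \((T,T)\), or by taking the row reading word of \(T\). By the recalled identity \(\Des(P(w))=\Des(w^{-1})\), we have
\[
\Des(T)=\{\,d\in[n-1] : d+1 \text{ appears to the left of } d \text{ in } w\,\}.
\]

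Next apply \Cref{lem:rsk-subword} to the consecutive set \(B\), which gives \(P(\st(w_B))=T^{B}\). Hence \(\Des(T^B)=\Des(\st(w_B)^{-1})\). In \(\st(w_B)\) the letter \(j\in[k]\) stands for the letter \(a+j\) of \(w\), and the left-to-right order of these letters is the same as in \(w\). Therefore, for \(j\in[k-1]\), we have \(j\in\Des(\st(w_B)^{-1})\) if and only if \(a+j+1\) appears to the left of \(a+j\) in \(w\). This holds if and only if \(a+j\in\Des(T)\), where both \(a+j\) and \(a+j+1\) lie in \(B\). This is exactly the condition defining \(j\in\Des_{\lambda,i}(T)\), so \(\Des_{\lambda,i}(T)=\Des(T^{B_{\lambda,i}})\). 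Summing the elements of both sets gives \(\maj_{\lambda,i}(T)=\maj(T^{B_{\lambda,i}})\).

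The only point needing care is the bookkeeping between the shift by \(a\) and standardization, together with correctly invoking \(\Des(w^{-1})=\Des(P(w))\) rather than the \(Q\)-version. There is an alternative route that avoids permutations: one could observe that jeu de taquin slides preserve the descent set of a standard skew tableau, and that restricting \(T\) to \(B\) keeps the relative rows of \(d\) and \(d+1\). However, the permutation argument above only uses results already recalled in the paper, so that is the one I would write out.
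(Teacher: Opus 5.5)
Your proof is correct, but it takes a different route from the paper. The paper stays on the tableau side. It observes that \(\Des_{\lambda,i}(T)=\Des(\st(T_{B_{\lambda,i}}))\) directly from the definitions. It then invokes the standard fact that jeu de taquin rectification preserves the descent set of a standard skew tableau, which gives \(\Des(T^{B_{\lambda,i}})=\Des(\rect(\st(T_{B_{\lambda,i}})))=\Des(\st(T_{B_{\lambda,i}}))\). This is exactly the alternative you sketch in your last paragraph.

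You instead lift \(T\) to a permutation \(w\) with \(P(w)=T\). You use \(\Des(P(w))=\Des(w^{-1})\) to turn descents into relative positions of consecutive values, and transfer to \(T^{B}\) via \Cref{lem:rsk-subword}, where invariance under restriction to a value interval is evident.

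Your version has the merit of using only facts the paper explicitly recalls in \Cref{sec:preliminaries}. The paper's version is shorter and isolates the more basic statement. The two are not really independent, though. \Cref{lem:rsk-subword} for an interval not starting at \(1\) is itself usually proved via Sch\"utzenberger's jeu de taquin theory: deleting the smallest letter of \(w\) corresponds to a slide into \(P(w)\). So your argument rests on the same machinery in repackaged form.
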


\begin{proof}
  By the well-known fact \cite[p.~431]{EC2} that the
  rectification preserves the descent set, we have
  \[
    \Des(T^{B_{\lambda,i}})= \Des(\rect(\st (T_{B_{\lambda,i}}))) = \Des(\st(T_{B_{\lambda,i}})).
  \]
  It is immediate from the definition that
  \( \Des_{\lambda,i}(T) = \Des(\st(T_{B_{\lambda,i}})) \). This shows the first statement.
  The second statement follows from the first.
\end{proof}

\begin{lem}\label{lem:block-product-general}
  Let \( n_1,\dots,n_t \) be positive integers and
  \(n=n_1+\cdots+n_t\). For each \( j\in [t] \), let
  \( \mu^{(j)}\vdash n_j \),
  \(\mathcal A_j \subseteq \SYT(\mu^{(j)}) \), and
\[
  I_j=[n_1+\cdots+n_{j-1}+1,n_1+\cdots+n_j].
\]
Then
\[
\prod_{j=1}^t |\mathcal{A}_j| s_{\mu^{(j)}}
=
\sum_{\mu\vdash n}
\left|\left\{T\in \SYT(\mu): T^{I_j}\in \mathcal A_j\ \text{for all }j\right\}\right|
s_\mu.
\]
\end{lem}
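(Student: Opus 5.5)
By linearity, it suffices to prove the statement when each \(\mathcal A_j=\{U_j\}\) is a singleton. Both sides are additive in each \(\mathcal A_j\): the left side is multilinear in the \(|\mathcal A_j|\), and on the right the condition ``\(T^{I_j}\in\mathcal A_j\) for all \(j\)'' splits as a disjoint union over choices \(U_j\in\mathcal A_j\). So we must show
\[
\prod_{j=1}^t s_{\mu^{(j)}}=\sum_{\mu\vdash n}\left|\{T\in\SYT(\mu): T^{I_j}=U_j \text{ for all } j\}\right| s_\mu
\]
for fixed \(U_j\in\SYT(\mu^{(j)})\). The notable point is that the right side does not depend on the choice of the \(U_j\).

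The plan is to pass to the fundamental quasisymmetric expansion and use RSK with \Cref{lem:rsk-subword}. Fix permutations \(\pi_j\in S_{n_j}\) with \(P(\pi_j)=U_j\). By \eqref{eq:schur-fundamental} and RSK, for fixed \(U_j\) we have
\[
s_{\mu^{(j)}}=\sum_{Q\in\SYT(\mu^{(j)})}F_{n_j,\Des(Q)}=\sum_{\pi\in S_{n_j}:\,P(\pi)=U_j}F_{n_j,\Des(\pi)},
\]
since \(\Des(\pi)=\Des(Q(\pi))\). Iterating the shuffle identity \eqref{eq:shuffle} then expresses the product as
\[
\sum_{(\pi_1,\dots,\pi_t)}\;\sum_{w\in\shuffle(\pi_1,\dots,\pi_t)}F_{n,\Des(w)},
\]
where each \(\pi_j\) satisfies \(P(\pi_j)=U_j\), and \(\shuffle(\pi_1,\dots,\pi_t)\) consists of the shuffles of the words \(\pi_j\) shifted to the value intervals \(I_j\).

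Next I reparametrize. A permutation \(w\in S_n\) arises exactly once in this double sum precisely when \(P(\st(w_{I_j}))=U_j\) for all \(j\): it determines its pieces via \(\pi_j=\st(w_{I_j})\), and it is a shuffle of those pieces. By \Cref{lem:rsk-subword}, \(P(\st(w_{I_j}))=P(w)^{I_j}\). So the product equals \(\sum_w F_{n,\Des(w)}\) over \(w\in S_n\) with \(P(w)^{I_j}=U_j\) for all \(j\). Grouping by \((P(w),Q(w))=(T,Q)\), the condition involves only \(T\), and \(\Des(w)=\Des(Q)\). The sum therefore becomes
\[
\sum_{\mu}\left|\{T\in\SYT(\mu):T^{I_j}=U_j\ \forall j\}\right|\sum_{Q\in\SYT(\mu)}F_{n,\Des(Q)},
\]
which by \eqref{eq:schur-fundamental} is the desired Schur expansion.

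The main point needing care is the bookkeeping in the shuffle step. Iterating \eqref{eq:shuffle} for \(t\) factors must give exactly the multishuffle with blocks of values \(I_j\) in order. The map \((\pi_1,\dots,\pi_t,\text{shuffle})\mapsto w\) must be a bijection onto \(S_n\), with inverse \(w\mapsto(\st(w_{I_j}))_j\) together with the positions of each block. This is straightforward by induction on \(t\), using that the shuffle of the first two factors is itself a permutation whose value blocks are \(I_1\cup I_2\).
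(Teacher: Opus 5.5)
Your proposal is correct and follows essentially the same route as the paper: expand each factor in fundamental quasisymmetric functions via RSK, multiply with the shuffle identity \eqref{eq:shuffle}, convert the condition on \(\st(w_{I_j})\) into one on \(P(w)^{I_j}\) by \Cref{lem:rsk-subword}, and regroup by RSK. The only differences are cosmetic: you first reduce to singletons \(\mathcal A_j=\{U_j\}\) by multilinearity, whereas the paper carries the sets \(\mathcal A_j\) through directly, and you spell out the multishuffle bookkeeping that the paper leaves implicit.
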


\begin{proof}
By \eqref{eq:schur-fundamental}, we can express the left-hand side of the equation as
\[
\prod_{j=1}^t|\mathcal A_j|\, s_{\mu^{(j)}}
= \prod_{j=1}^t
\sum_{\substack{(P,Q)\in \SYT(n_j)^2\\
P\in \mathcal A_j,\ \sh(P)=\sh(Q)}}
F_{n_j,\Des(Q)}.
\]
By the Robinson--Schensted correspondence
and repeated applications of \eqref{eq:shuffle},
we obtain
\[
\prod_{j=1}^t |\mathcal A_j|\, s_{\mu^{(j)}}
=
\prod_{j=1}^t
\sum_{\substack{w\in S_{n_j}\\
P(w)\in \mathcal A_j}}
F_{n_j,\Des(w)}
= \sum_{w\in A} F_{n,\Des(w)},
\]
where \(A = \{w\in S_n: P\bigl(\st(w_{I_j})\bigr)\in \mathcal A_j\text{ for all }j\} \).
By \Cref{lem:rsk-subword}, $A$ can be rewritten as
\[
A=
\left\{
 w\in S_n : P(w)^{I_j} \in \mathcal A_j \text{ for all }j
\right\}.
\]
Thus, applying the Robinson--Schensted correspondence together with the identity
$\Des(w)=\Des(Q(w))$, we obtain
\begin{align*}
\sum_{w\in A} F_{n,\Des(w)}
&=
\sum_{\mu\vdash n}
\sum_{\substack{w\in S_n\\
\sh(P(w))=\mu\\
P(w)^{I_j}\in \mathcal A_j\ \text{for all }j}}
F_{n,\Des(Q(w))}\\
&=
\sum_{\mu\vdash n}
\sum_{\substack{(P,Q)\in \SYT(\mu)^2\\
P^{I_j}\in \mathcal A_j\ \text{for all }j}}
F_{n,\Des(Q)}\\
&=
\sum_{\mu\vdash n}
\left|
\Bigl\{
P\in \SYT(\mu) : 
P^{I_j}\in \mathcal A_j\ \text{for all }j
\Bigr\}\right|
s_\mu,
\end{align*}
which completes the proof.
\end{proof}

The following lemma will play a key role in what follows.

\begin{lem}\label{lem:prod}
Let $\lambda=(\lambda_1,\dots,\lambda_r)$ be a partition of $n$.  
Then 
\[
\prod_{j=1}^r \ch(L_{\lambda_j})
= \sum_{\mu\vdash n} |\SYT_\lambda(\mu)| s_{\mu} .
\]
\end{lem}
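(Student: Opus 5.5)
The plan is to combine \Cref{thm:KW} with \Cref{lem:block-product-general}, using \Cref{lem:rect-des} to translate the block condition into the block-major index condition.

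First, for each \(j\in[r]\), \Cref{thm:KW} gives
\[
\ch(L_{\lambda_j})=\sum_{\nu\vdash \lambda_j} |\mathcal A_j(\nu)|\, s_{\nu},
\qquad
\mathcal A_j(\nu):=\{U\in\SYT(\nu):\maj(U)\equiv 1\pmod{\lambda_j}\}.
\]
Expanding the product \(\prod_j \ch(L_{\lambda_j})\) then yields
\[
\sum_{\nu^{(1)},\dots,\nu^{(r)}} \prod_{j=1}^r |\mathcal A_j(\nu^{(j)})|\, s_{\nu^{(j)}},
\]
where the sum runs over all tuples with \(\nu^{(j)}\vdash\lambda_j\).

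Next, apply \Cref{lem:block-product-general} to each term, with \(n_j=\lambda_j\) and \(\mu^{(j)}=\nu^{(j)}\). Here the intervals \(I_j\) are exactly the blocks \(B_{\lambda,j}\) of \Cref{def:9}. Each term becomes
\[
\sum_{\mu\vdash n}\bigl|\{T\in\SYT(\mu): T^{B_{\lambda,j}}\in\mathcal A_j(\nu^{(j)})\ \forall j\}\bigr|\, s_\mu.
\]

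Then sum over all tuples \((\nu^{(j)})\). For a fixed \(T\in\SYT(\mu)\), each \(T^{B_{\lambda,j}}\) has a unique shape \(\nu^{(j)}\vdash\lambda_j\), so the sets indexed by different tuples are disjoint. Their union is therefore \(\{T\in\SYT(\mu): \maj(T^{B_{\lambda,j}})\equiv 1 \pmod{\lambda_j}\ \forall j\}\).

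Finally, \Cref{lem:rect-des} gives \(\maj(T^{B_{\lambda,j}})=\maj_{\lambda,j}(T)\), so this union is exactly \(\SYT_\lambda(\mu)\), which proves the claim.

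There is no real obstacle. The only care needed is the bookkeeping that the tuple decomposition is disjoint and exhaustive. An alternative route would be to apply \Cref{lem:block-product-general} once, with \(\mathcal A_j\) allowed to be a union over shapes, but the lemma as stated requires a single shape \(\mu^{(j)}\). The term-by-term summation handles this directly.
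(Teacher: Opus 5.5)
Your proposal is correct and follows essentially the same route as the paper's proof. Both expand the product via \Cref{thm:KW}, apply \Cref{lem:block-product-general} shape-tuple by shape-tuple with \(I_j=B_{\lambda,j}\), sum over tuples using that \(\sh(T^{B_{\lambda,j}})\) is determined by \(T\), and invoke \Cref{lem:rect-des} to identify the resulting condition with membership in \(\SYT_\lambda(\mu)\).
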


\begin{proof}
By \Cref{thm:KW}, we have
\begin{equation}\label{eq:2}
\prod_{j=1}^r \ch(L_{\lambda_j})
= \sum_{\mu^{(1)},\dots,\mu^{(r)}}
\prod_{j=1}^r |\mathcal A_j^{\mu^{(j)}}|\, s_{\mu^{(j)}},
\end{equation}
where the sum is over all partitions \(\mu^{(1)},\dots,\mu^{(r)}\)
with \(\mu^{(j)}\vdash \lambda_j\), and
\[
  \mathcal A_j^{\mu^{(j)}}=\{U\in \SYT(\mu^{(j)}):\maj(U)\equiv 1 \pmod{\lambda_j}\} .
\]
Applying \Cref{lem:block-product-general} with \(n_j=\lambda_j\),
\(\mathcal A_j=\mathcal A_j^{\mu^{(j)}}\), and \(I_j=B_{\lambda,j}\)
yields
\[
\prod_{j=1}^r |\mathcal A_j^{\mu^{(j)}}|\, s_{\mu^{(j)}}
=
\sum_{\mu\vdash n}
\Bigl|\Bigl\{T\in \SYT(\mu): T^{B_{\lambda,j}}\in \mathcal A_j^{\mu^{(j)}}\ \text{for all }j\Bigr\}\Bigr|\, s_\mu.
\]
By \Cref{lem:rect-des}, the condition
\( T^{B_{\lambda,j}}\in \mathcal A_j^{\mu^{(j)}} \) is equivalent to
\( \maj(T^{B_{\lambda,j}})\equiv 1 \pmod{\lambda_j} \) and
\( \sh(T^{B_{\lambda,j}}) = \mu^{(j)} \). Hence, summing over all
partitions \( \mu^{(j)}\vdash \lambda_j \) for \( j\in [r] \) and
using \eqref{eq:2}, we obtain
\[
\prod_{j=1}^r \ch(L_{\lambda_j})
=
\sum_{\mu\vdash n}
\Bigl|\Bigl\{T\in \SYT(\mu): \maj(T^{B_{\lambda,j}})\equiv 1 \pmod{\lambda_j}\ \text{for all }j\Bigr\}\Bigr|\, s_\mu,
\]
which agrees with the desired identity.
\end{proof}

Since \(\ch(L_\lambda)=\prod_{i\ge 1} \ch(L_{\lambda_i}) \) for a
partition \( \lambda \) with distinct parts, we obtain its Schur
expansion immediately from \Cref{lem:prod}.

\begin{thm}\label{thm:distinct}
Let $\lambda$ be a partition of $n$ with distinct parts.  
Then 
\[
\ch(L_\lambda) = \sum_{\mu\vdash n} |\SYT_\lambda(\mu)| s_\mu.
\]
\end{thm}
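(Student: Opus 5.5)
The plan is to reduce \Cref{thm:distinct} directly to \Cref{lem:prod} by means of the plethystic formula \eqref{eq:ch(L_la)}. The argument has three steps.

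First, recall that
\[
\ch(L_\lambda)=\prod_{i\ge1} h_{m_i(\lambda)}[\ch(L_i)].
\]
When \(\lambda\) has distinct parts, every multiplicity \(m_i(\lambda)\) is either \(0\) or \(1\).

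Second, we evaluate the two possible factors.
\begin{itemize}
\item Since \(h_0=1\) and plethysm is defined through the algebra homomorphism \(\varphi_f\), which sends \(1\) to \(1\), we get \(h_0[f]=1\).
\item Since \(h_1=p_1\) and \(\varphi_f(p_1)=p_1[f]=f(x_1,x_2,\dots)=f\), we get \(h_1[f]=f\).
\end{itemize}
Hence the product collapses to
\[
\ch(L_\lambda)=\prod_{i:\,m_i(\lambda)=1}\ch(L_i)=\prod_{j=1}^{r}\ch(L_{\lambda_j}),
\]
where \(\lambda=(\lambda_1,\dots,\lambda_r)\). The last equality holds because the parts \(\lambda_j\) are exactly the \(i\) with \(m_i(\lambda)=1\), each appearing once.

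Third, we apply \Cref{lem:prod} to the partition \(\lambda\) itself. This gives
\[
\prod_{j=1}^r \ch(L_{\lambda_j})=\sum_{\mu\vdash n}|\SYT_\lambda(\mu)|\,s_\mu,
\]
which is the claimed identity.

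None of these steps is a real obstacle; the only point needing care is the ordering in the last step. \Cref{lem:prod} uses the blocks \(B_{\lambda,i}\) in the order of the parts of \(\lambda\), namely decreasing. The product \(\prod_j\ch(L_{\lambda_j})\) is commutative, so taking the factors in that order is consistent with the definition of \(\SYT_\lambda(\mu)\).
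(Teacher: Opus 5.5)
Your proposal is correct and follows the paper's argument. The paper also notes that for distinct parts $\ch(L_\lambda)=\prod_{i}\ch(L_{\lambda_i})$ (by \eqref{eq:ch(L_la)}, since each $m_i(\lambda)\in\{0,1\}$) and then applies \Cref{lem:prod} directly. You additionally spell out why $h_0[f]=1$ and $h_1[f]=f$, which the paper leaves implicit.
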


When $\lambda=(n)$, we have
\[
\SYT_\lambda(\mu) = \{ T\in \SYT(\mu): \maj(T)\equiv 1 \pmod n\}.
\]
In this case, \Cref{thm:distinct} recovers the classical result of
Kra\'skiewicz--Weyman (\Cref{thm:KW}). We illustrate
\Cref{thm:distinct} with an example.

\begin{exam}\label{exam:distinct-42}
  Consider $n=6$ and $\lambda=(4,2)$. The blocks are
  \(B_{\lambda,1}=\{1,2,3,4\} \) and \( B_{\lambda,2}=\{5,6\} \).
  \Cref{fig:1} shows the standard Young tableaux \( T\in \SYT(6) \) such that
\[
\maj_{\lambda,1}(T)\equiv 1 \pmod 4,
\qquad
\maj_{\lambda,2}(T)\equiv 1 \pmod 2.
\]
Summing the corresponding Schur functions yields
\[
\ch(L_{(4,2)}) = s_{(4,2)}
+s_{(4,1,1)}
+2s_{(3,2,1)}
+2s_{(3,1,1,1)}
+s_{(2,2,2)}
+s_{(2,2,1,1)}
+s_{(2,1,1,1,1)}.
\]
\end{exam}

\begin{figure}
  \centering
\renewcommand{\arraystretch}{1.2}

\begin{tabular}[t]{c@{\qquad\qquad}c@{\qquad\qquad}c@{\qquad\qquad}c}

\begin{tabular}[t]{@{}c@{}}
$(1,1)$\\[0.3ex]
\begin{ytableau}
\mathbf{1} & 3 & 4 & \mathbf{5}\\
2 & 6
\end{ytableau}
\end{tabular}
&
\begin{tabular}[t]{@{}c@{}}
$(1,1)$\\[0.3ex]
\begin{ytableau}
\mathbf{1} & 3 & 4 & \mathbf{5}\\
2\\
6
\end{ytableau}
\end{tabular}
&
\begin{tabular}[t]{@{}c@{}}
$(1,1)$\\[0.3ex]
\begin{ytableau}
\mathbf{1} & 3 & 4\\
2 & \mathbf{5}\\
6
\end{ytableau}
\end{tabular}
&
\begin{tabular}[t]{@{}c@{}}
$(5,1)$\\[0.3ex]
\begin{ytableau}
1 & \mathbf{2} & \mathbf{5}\\
\mathbf{3} & 6\\
4
\end{ytableau}
\end{tabular}

\end{tabular}

\vspace{1em}

\begin{tabular}[t]{c@{\qquad\qquad}c@{\qquad\qquad}c@{\qquad\qquad}c@{\qquad\qquad}c}

\begin{tabular}[t]{@{}c@{}}
$(1,1)$\\[0.3ex]
\begin{ytableau}
\mathbf{1} & 3 & 4\\
2\\
\mathbf{5}\\
6
\end{ytableau}
\end{tabular}
&
\begin{tabular}[t]{@{}c@{}}
$(5,1)$\\[0.3ex]
\begin{ytableau}
1 & \mathbf{2} & \mathbf{5}\\
\mathbf{3}\\
4\\
6
\end{ytableau}
\end{tabular}
&
\begin{tabular}[t]{@{}c@{}}
$(5,1)$\\[0.3ex]
\begin{ytableau}
1 & \mathbf{2}\\
\mathbf{3} & \mathbf{5}\\
4 & 6
\end{ytableau}
\end{tabular}
&
\begin{tabular}[t]{@{}c@{}}
$(5,1)$\\[0.3ex]
\begin{ytableau}
1 & \mathbf{2}\\
\mathbf{3} & \mathbf{5}\\
4\\
6
\end{ytableau}
\end{tabular}
&
\begin{tabular}[t]{@{}c@{}}
$(5,1)$\\[0.3ex]
\begin{ytableau}
1 & \mathbf{2}\\
\mathbf{3}\\
4\\
\mathbf{5}\\
6
\end{ytableau}
\end{tabular}

\end{tabular}

\caption{The tableaux in \( \bigcup_{\mu\vdash 6}\SYT_{(4,2)}(\mu) \).
Above each \( T \), the sequence
\((\maj_{\lambda,1}(T),\maj_{\lambda,2}(T))\) is shown. We highlight
in bold the entries contributing to the block-major indices, i.e.,
those \(d\in\Des(T)\) such that \(d\) and \(d+1\) are contained in the
same block.}
  \label{fig:1}
\end{figure}

We now show that a refined \( (\lambda,\mu) \)-Thrall
subset always exists.

\begin{prop}\label{prop:general-upper-bound}
  For any partitions \( \lambda \) and \( \mu \) of \( n \), we have
\[
c_{\lambda,\mu} \le |\SYT_\lambda(\mu)|.
\]
In particular, a refined \( (\lambda,\mu) \)-Thrall subset always exists.

Moreover, for a given \( \lambda\vdash n \), the equality holds
for all \( \mu\vdash n \) if and only if \( \lambda \) has distinct parts.
\end{prop}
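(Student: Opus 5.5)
By \Cref{lem:prod}, the right-hand side is the Schur coefficient of $s_\mu$ in $\prod_{j}\ch(L_{\lambda_j})$. The plan is therefore to compare $\ch(L_\lambda)$ with this product coefficientwise in the Schur basis. By \eqref{eq:ch(L_la)} it suffices to show, for each $i$ with $m=m_i(\lambda)\ge 1$, that
\[
\ch(L_i)^m-h_m[\ch(L_i)]
\]
is Schur-positive. Products of Schur-positive functions are Schur-positive, so writing $\prod_j \ch(L_{\lambda_j})=\prod_i \ch(L_i)^{m_i}$ and expanding each factor as $h_{m_i}[\ch(L_i)]+(\text{Schur-positive})$ gives $\prod_j\ch(L_{\lambda_j})-\ch(L_\lambda)$ Schur-positive. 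This is exactly $c_{\lambda,\mu}\le|\SYT_\lambda(\mu)|$ for all $\mu$. Existence of a refined Thrall subset then follows: pick any $c_{\lambda,\mu}$ elements of $\SYT_\lambda(\mu)$.

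For the positivity claim I use representation theory. $\ch(L_i)^m$ is the character of $L_i(V)^{\otimes m}$, and $\mathrm{Sym}^m(L_i(V))$ is a $\GL(V)$-direct summand of it: it is the image of the symmetrizing projector, which commutes with the $\GL(V)$-action. The complement is a polynomial $\GL(V)$-module, hence its character is a nonnegative combination of Schur polynomials for every $N$. Taking $N\to\infty$ gives the claim. Alternatively, one can write $f^m=\sum_{\nu\vdash m} f^{\nu}\, s_\nu[f]$, where $f^\nu$ is the number of standard Young tableaux of shape $\nu$. This holds because $p_1^m=\sum_\nu f^\nu s_\nu$ and plethysm by $f$ is a ring homomorphism. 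Plethysms of Schur-positive functions by Schur functions are Schur-positive, and the $\nu=(m)$ term is $h_m[f]$.

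For the equality statement, one direction is \Cref{thm:distinct}. Conversely, suppose some part $i$ has multiplicity $m\ge 2$. I need to exhibit a $\mu$ with strict inequality. By the argument above, the difference $\prod_j\ch(L_{\lambda_j})-\ch(L_\lambda)$ equals a product of Schur-positive factors, at least one of which is a nonzero term such as $f^{(m-1,1)}s_{(m-1,1)}[\ch(L_i)]\cdot(\cdots)$ times the other nonzero factors. The main point to check is that this term is nonzero. Since $\ch(L_i)\neq 0$, it is a nonzero Schur-positive function. The plethysm $s_\nu[f]$ of a nonzero Schur-positive homogeneous $f$ is nonzero: it is the character of the nonzero module $\mathbb{S}_\nu(W)$ with $W$ nonzero and $N$ large. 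A product of nonzero Schur-positive functions is nonzero, since there is no cancellation. Hence the difference is a nonzero Schur-positive function, so some coefficient is strictly positive, giving $c_{\lambda,\mu}<|\SYT_\lambda(\mu)|$ for that $\mu$.

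The step requiring the most care is justifying Schur-positivity of $s_\nu[f]$ for Schur-positive $f$ in the paper's $N\to\infty$ setting. I will argue it via Schur functors applied to polynomial representations with character $f$, here $L_i(V)$ itself.
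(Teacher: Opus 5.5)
Your proof is correct and is essentially the paper's argument. The paper embeds $L_\lambda=\bigotimes_j \mathrm{Sym}^{k_j}(L_{a_j})$ into $\bigotimes_i L_{\lambda_i}$, combines monotonicity of multiplicities with \Cref{lem:prod}, and for the converse uses that $\mathrm{Sym}^{k_j}(L_{a_j})$ is a proper submodule of $L_{a_j}^{\otimes k_j}$ when $k_j\ge 2$; your nonzero $s_{(m-1,1)}[\ch(L_i)]$ term is an explicit witness of that properness. The one point to tighten is that $\mathbb{S}_{(m-1,1)}(W)\neq 0$ requires $\dim W\ge 2$, not merely $W\neq 0$, and this holds for large $N$ because $\ch(L_i)$ is nonzero of positive degree $i$.
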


\begin{proof}
  Let \(\lambda=(\lambda_1,\dots,\lambda_r)=(a_1^{k_1},\dots,a_t^{k_t})\) with
  \( a_1>\cdots>a_t\ge1 \) and \( k_1,\dots,k_t\ge1 \). Then
\[
L_\lambda
=
\bigotimes_{j=1}^t \operatorname{Sym}^{k_j}(L_{a_j}).
\]
For each $j\in[t]$, the symmetric power $\operatorname{Sym}^{k_j}(L_{a_j})$ embeds naturally into $L_{a_j}^{\otimes k_j}$ as a $\GL(V)$-submodule. Taking tensor products over $j$, we obtain a natural inclusion
\begin{equation}\label{eq:10}
L_\lambda
=
\bigotimes_{j=1}^t \operatorname{Sym}^{k_j}(L_{a_j})
\hookrightarrow
\bigotimes_{j=1}^t L_{a_j}^{\otimes k_j}
=
\bigotimes_{i=1}^r L_{\lambda_i}.
\end{equation}
For every partition $\mu\vdash n$, it follows that the Schur
coefficient \( c_{\lambda,\mu} \) of $s_\mu$ in $\ch(L_\lambda)$ is at
most that in \(\prod_{i=1}^r \ch(L_{\lambda_i}) \), since multiplicities
of irreducibles are monotone under inclusions of \(\GL(V)\)-modules. By \Cref{lem:prod}, the latter coefficient is equal
to \(|\SYT_\lambda(\mu)| \), and we obtain the inequality.

For the last statement, the ``if'' part follows from
\Cref{thm:distinct}. For the ``only if'' part, suppose that
\( \lambda \) has equal parts, that is, \( k_j\ge2 \) for some \( j \).
Then $\operatorname{Sym}^{k_j}(L_{a_j})$ is a proper
$\GL(V)$-submodule of $L_{a_j}^{\otimes k_j}$, so the inclusion in
\eqref{eq:10} is not surjective. Thus, there exists \( \mu\vdash n \)
such that \( c_{\lambda,\mu}< |\SYT_\lambda(\mu)| \), which completes
the proof.
\end{proof}

Note that every partition \( \lambda \) can be written as the union of
the rectangular blocks \( (i^{m_i(\lambda)}) \) for \( i\ge1 \). The
following proposition shows that to find a refined
\( (\lambda,\mu) \)-Thrall subset, it suffices to find a refined
\( ((i^{m_i(\lambda)}),\nu) \)-Thrall subset for all \( i\ge1 \) and
\( \nu\vdash i \cdot m_i(\lambda) \).

\begin{prop}\label{prop:prod-L-la}
  Let \( \lambda= \lambda^{(1)} \cup \cdots \cup \lambda^{(t)} \) be a
  partition with \( \lambda^{(j)}\vdash n_j \) such that the smallest
  part of \( \lambda^{(i)} \) is greater than the largest part of
  \( \lambda^{(i+1)} \) for all \( i\in [t-1] \). Suppose that
  \( \mathcal{A}(\lambda^{(j)}, \nu) \) is a refined
  \( (\lambda^{(j)}, \nu) \)-Thrall subset for all \( j\in[t] \) and
  \( \nu\vdash n_j \). Let \(n=n_1+\cdots+n_t\) and
\[
  I_j=[n_1+\cdots+n_{j-1}+1,n_1+\cdots+n_j].
\]
Then for any \( \mu\vdash n \), the following is a
refined \( (\lambda,\mu) \)-Thrall subset:
\begin{equation}\label{eq:7}
  \{T\in \SYT_\lambda(\mu): T^{I_j} \in \mathcal A(\lambda^{(j)},\sh(T^{I_j})) \text{ for all }j\}.
\end{equation}
\end{prop}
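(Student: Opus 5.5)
The plan is to compute \(\ch(L_\lambda)\) as a product over the blocks and then apply \Cref{lem:block-product-general}. Since the parts of distinct \(\lambda^{(j)}\) are disjoint, \eqref{eq:ch(L_la)} gives
\[
\ch(L_\lambda)=\prod_{j=1}^t \ch(L_{\lambda^{(j)}})
=\prod_{j=1}^t\sum_{\nu^{(j)}\vdash n_j}\bigl|\mathcal A(\lambda^{(j)},\nu^{(j)})\bigr|\,s_{\nu^{(j)}} .
\]
Expanding the product, each term is indexed by a tuple \((\nu^{(1)},\dots,\nu^{(t)})\). To each such term I apply \Cref{lem:block-product-general} with \(\mathcal A_j=\mathcal A(\lambda^{(j)},\nu^{(j)})\) and intervals \(I_j\). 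This yields
\[
\sum_{\mu\vdash n}\bigl|\{T\in\SYT(\mu):T^{I_j}\in\mathcal A(\lambda^{(j)},\nu^{(j)})\text{ for all }j\}\bigr|\,s_\mu .
\]
The subsets \(\SYT(\nu^{(j)})\) are disjoint for different \(\nu^{(j)}\), and \(\sh(T^{I_j})\) determines \(\nu^{(j)}\). So summing over all tuples turns the condition into \(T^{I_j}\in\mathcal A(\lambda^{(j)},\sh(T^{I_j}))\) for all \(j\). Hence \(c_{\lambda,\mu}\) equals the number of \(T\in\SYT(\mu)\) satisfying this condition.

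It remains to show that every such \(T\) lies in \(\SYT_\lambda(\mu)\), so that the set in \eqref{eq:7} is a subset of \(\SYT_\lambda(\mu)\) of the right size. By the ordering hypothesis, \(\lambda\) listed in decreasing order is the concatenation of \(\lambda^{(1)},\dots,\lambda^{(t)}\). Hence each block \(B_{\lambda,i}\) lies inside some \(I_j\), and it corresponds to the block \(B_{\lambda^{(j)},i'}\) shifted by \(n_1+\cdots+n_{j-1}\), where \(\lambda_i=\lambda^{(j)}_{i'}\).

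The key step is to check that
\[
\maj_{\lambda,i}(T)=\maj_{\lambda^{(j)},i'}(T^{I_j}).
\]
By \Cref{lem:rect-des}, the left side is \(\maj(T^{B_{\lambda,i}})\). The right side is \(\maj\bigl((T^{I_j})^{B'}\bigr)\), where \(B'=B_{\lambda^{(j)},i'}\). So it suffices to show
\[
(T^{I_j})^{B'}=T^{B_{\lambda,i}} .
\]
This is a compatibility of restriction, standardization and rectification, and it is the only mildly delicate point. I would prove it via \Cref{lem:rsk-subword}. Pick \(w\) with \(P(w)=T\). Then \(T^{I_j}=P(\st(w_{I_j}))\), and applying the lemma again to \(\st(w_{I_j})\) with \(B'\) gives \(P(\st(w_{B_{\lambda,i}}))=T^{B_{\lambda,i}}\). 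Alternatively, one can compare descent sets directly: both sides have descent set \(\Des_{\lambda,i}(T)\) up to shift, which is all that is needed.

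Given this identity, \(T^{I_j}\in\mathcal A(\lambda^{(j)},\cdot)\subseteq\SYT_{\lambda^{(j)}}(\cdot)\) implies \(\maj_{\lambda,i}(T)\equiv1\pmod{\lambda_i}\) for every \(i\). Therefore \(T\in\SYT_\lambda(\mu)\), and the set in \eqref{eq:7} is a refined \((\lambda,\mu)\)-Thrall subset.
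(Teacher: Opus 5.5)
Your proposal is correct and follows the paper's proof essentially step for step: factor $\ch(L_\lambda)$ over the blocks, apply \Cref{lem:block-product-general} tuple by tuple, collapse the sum using $\nu^{(j)}=\sh(T^{I_j})$, and then verify membership in $\SYT_\lambda(\mu)$ through the identity $\maj_{\lambda,i}(T)=\maj_{\lambda^{(j)},i'}(T^{I_j})$. The only difference is in justifying $(T^{I_j})^{B'}=T^{B_{\lambda,i}}$: the paper appeals to the independence of rectification from the order of jeu de taquin slides, whereas you derive it from repeated use of \Cref{lem:rsk-subword} or bypass it by comparing descent sets via \Cref{lem:rect-des}, and either route works.
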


\begin{proof}
  By the assumption on \( \lambda \), we have
  \(m_i(\lambda)=\sum_{j=1}^t m_i(\lambda^{(j)}) \), with at most one
  nonzero summand. Hence, \eqref{eq:ch(L_la)} gives
\[
\ch(L_\lambda)
=\prod_{i\ge 1} h_{m_i(\lambda)}[\ch(L_i)]
=\prod_{j=1}^t \prod_{i\ge 1} h_{m_i(\lambda^{(j)})}[\ch(L_i)]
=\prod_{j=1}^t \ch(L_{\lambda^{(j)}}).
\]
Since \(\mathcal A(\lambda^{(j)},\nu)\) is a refined
\((\lambda^{(j)},\nu)\)-Thrall subset for every \(\nu\vdash n_j\),
the above can be written as
\[
\ch(L_\lambda)
=\prod_{j=1}^t \sum_{\nu^{(j)}\vdash n_j} |\mathcal A(\lambda^{(j)},\nu^{(j)})|\, s_{\nu^{(j)}}.
\]
For each tuple \((\nu^{(1)},\dots,\nu^{(t)})\) with \(\nu^{(j)}\vdash n_j\), applying \Cref{lem:block-product-general} with \(\mu^{(j)}=\nu^{(j)}\) and \(\mathcal A_j=\mathcal A(\lambda^{(j)},\nu^{(j)})\) yields
\[
\prod_{j=1}^t |\mathcal A(\lambda^{(j)},\nu^{(j)})|\, s_{\nu^{(j)}}
=
\sum_{\mu\vdash n}
\Bigl|\Bigl\{T\in \SYT(\mu):
T^{I_j}\in \mathcal A(\lambda^{(j)},\nu^{(j)})\ \text{for all }j\Bigr\}\Bigr|\, s_\mu.
\]
Summing over \(\nu^{(j)}\vdash n_j\) and using the fact that
\(T^{I_j}\in \mathcal A(\lambda^{(j)},\nu^{(j)})\) forces
\(\nu^{(j)}=\sh(T^{I_j})\), we obtain
\(\ch(L_\lambda) = \sum_{\mu\vdash n} |A(\mu)| s_\mu \),
where
\[
  A(\mu) = \{T\in \SYT(\mu): T^{I_j}\in \mathcal
  A(\lambda^{(j)},\sh(T^{I_j}))\ \text{for all }j\}.
\]
 Thus, it
suffices to show that \( A(\mu) \) is equal to the set in
\eqref{eq:7}, which we denote by \( B(\mu) \). Since
\( B(\mu) \subseteq A(\mu) \), it remains to show that
\( A(\mu) \subseteq B(\mu) \), or equivalently, if \( T\in A(\mu) \),
then \( T\in \SYT_\lambda(\mu) \).

Let \( T\in A(\mu) \).
By the assumption on \( \lambda \), we have
\[
\lambda=\bigl(\lambda^{(1)}_1,\dots,\lambda^{(1)}_{r_1},\lambda^{(2)}_1,\dots,\lambda^{(2)}_{r_2},\dots,\lambda^{(t)}_1,\dots,\lambda^{(t)}_{r_t}\bigr),
\]
where \(r_j= \ell(\lambda^{(j)})\). Let \(R_j=r_1+\cdots+r_j\).
To prove \( T\in \SYT_\lambda(\mu) \), we must show that
for all \(j\in[t]\) and \(k\in[r_j]\),
\begin{equation}\label{eq:8}
  \maj_{\lambda,R_{j-1}+k}(T)\equiv 1 \pmod{\lambda_{R_{j-1}+k}}.
\end{equation}

We have \(\lambda_{R_{j-1}+k}=\lambda^{(j)}_k\), and
\(B_{\lambda,R_{j-1}+k}=\{(n_1+\cdots+n_{j-1})+a: a\in
B_{\lambda^{(j)},k}\} \) is contained in \(I_j\). Observe that for any
sets \( I \) and \( J \) of consecutive integers with
\( I\subseteq J\subseteq[n] \), we have \( T^I = (T^J)^{I'} \), where
\( I'=\{i-\min(J)+1:i\in I\} \), since \( T^I = \rect(\st(T_I)) \) and
the rectification does not depend on the sequence of jeu de taquin
slides. This shows that
\(T^{B_{\lambda,R_{j-1}+k}} = (T^{I_j})^{B_{\lambda^{(j)},k}} \).
Then by \Cref{lem:rect-des}, we have
\begin{equation}\label{eq:9}
\maj_{\lambda,R_{j-1}+k}(T)
=\maj\bigl(T^{B_{\lambda,R_{j-1}+k}}\bigr)
=\maj\bigl((T^{I_j})^{B_{\lambda^{(j)},k}}\bigr)
=\maj_{\lambda^{(j)},k}(T^{I_j}).
\end{equation}

On the other hand, since \( T\in A(\mu) \), letting
\(\nu^{(j)}=\sh(T^{I_j})\), we have
\(T^{I_j}\in \mathcal A(\lambda^{(j)},\nu^{(j)})\), which is a refined
\( (\lambda^{(j)},\nu^{(j)}) \)-Thrall subset. Thus, we have
\( T^{I_j}\in \SYT_{\lambda^{(j)}}(\nu^{(j)}) \), that is,
\(\maj_{\lambda^{(j)},k}(T^{I_j})\equiv 1 \pmod{\lambda^{(j)}_k}\) for
all \( k\in [r_j] \). Therefore, \eqref{eq:9} together with the fact
that \(\lambda_{R_{j-1}+k}=\lambda^{(j)}_k\) implies \eqref{eq:8},
which completes the proof.
\end{proof}

As an application of \Cref{prop:prod-L-la}, we find
\( (\lambda,\mu) \)-Thrall subsets when \( \lambda \) is a hook that
is not a one-column partition.

\begin{thm}\label{thm:hook-general}
Let $\lambda=(n-k,1^k)$ be a hook partition of $n$ with $0\le k\le n-2$. Then
\[
\ch(L_{(n-k,1^k)})
=
\sum_{\mu\vdash n} |\{T\in \SYT_{(n-k,1^k)}(\mu): \Des(T)\subseteq [n-k]\}| s_\mu.
\]
\end{thm}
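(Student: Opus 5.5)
Apply \Cref{prop:prod-L-la} with $t=2$, $\lambda^{(1)}=(n-k)$ and $\lambda^{(2)}=(1^k)$. Since $k\le n-2$, we have $n-k\ge 2>1$, so the hypothesis on the parts holds. Here $n_1=n-k$, $n_2=k$, $I_1=[n-k]$ and $I_2=[n-k+1,n]$. (If $k=0$ the statement is \Cref{thm:KW}, since the condition $\Des(T)\subseteq[n]$ is vacuous; this case is also covered by \Cref{thm:distinct}.)

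For the two blocks we use the following refined Thrall subsets. For $\lambda^{(1)}=(n-k)$, take $\mathcal A((n-k),\nu)=\SYT_{(n-k)}(\nu)$. By \Cref{thm:distinct} (equivalently \Cref{thm:KW}), this is a refined Thrall subset. For $\lambda^{(2)}=(1^k)$, the congruences $\maj_{(1^k),i}\equiv 1 \pmod 1$ are vacuous, so $\SYT_{(1^k)}(\nu)=\SYT(\nu)$. By \eqref{eq:one-col}, the set $\mathcal A((1^k),\nu)=\{U\in\SYT(\nu):\Des(U)=\emptyset\}$ is a refined $((1^k),\nu)$-Thrall subset. It is nonempty exactly when $\nu=(k)$, and then it consists of the single one-row tableau.

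\Cref{prop:prod-L-la} then says that the set of $T\in\SYT_\lambda(\mu)$ with $T^{I_1}\in\SYT_{(n-k)}(\sh(T^{I_1}))$ and $\Des(T^{I_2})=\emptyset$ is a refined Thrall subset. It remains to identify this set with the one in the statement.

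The first condition is automatic for $T\in\SYT_\lambda(\mu)$. Indeed, $T^{I_1}=T_{[n-k]}$ and $\maj(T^{I_1})=\maj_{\lambda,1}(T)\equiv 1\pmod{n-k}$ by \Cref{lem:rect-des}. For the second condition, \Cref{lem:rect-des} applied to the block structure gives
\[
\Des(T^{I_2})=\{d-(n-k): d\in\Des(T),\ d,d+1\in I_2\},
\]
since rectification and standardization preserve descents. So $\Des(T^{I_2})=\emptyset$ means that no $d\in[n-k+1,n-1]$ lies in $\Des(T)$, which is exactly $\Des(T)\subseteq[n-k]$. The descent $d=n-k$ straddles the two blocks and is unconstrained, which is why the bound is $[n-k]$ rather than $[n-k-1]$.

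Combining these gives the stated formula. The only mildly delicate step is the descent translation for $I_2$. I would justify it either directly from the proof of \Cref{lem:rect-des} with $B_{\lambda,i}$ replaced by $I_2$, or by writing $T^{I_2}$ in terms of the blocks as in the proof of \Cref{prop:prod-L-la}.
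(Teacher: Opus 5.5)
Your proof is correct and follows the paper's argument. Like the paper, you apply \Cref{prop:prod-L-la} to the blocks $(n-k)$ and $(1^k)$, using \Cref{thm:KW} for the first block and \eqref{eq:one-col} together with $\SYT_{(1^k)}(\nu)=\SYT(\nu)$ for the second; you then spell out what the paper leaves implicit, namely the translation $\Des(T^{I_2})=\emptyset \iff \Des(T)\subseteq[n-k]$ and the separate $k=0$ case.
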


\begin{proof}
Observe that \eqref{eq:one-col} is equivalent to
\[
  \ch(L_{(1^n)})= \sum_{\mu\vdash n} |\{T\in \SYT_{(1^n)}(\mu):\Des(T)=\emptyset\}| s_\mu.
\]
Hence, we obtain the result by \Cref{thm:KW} and \Cref{prop:prod-L-la}.
\end{proof}

\section{The two-row case and partitions with multiplicity at most two}\label{sec:equal-parts}

In this section, we solve Thrall's problem when \( \lambda=(n,n) \) by
finding a refined \( (\lambda,\mu) \)-Thrall subset. Recall from
\Cref{prop:general-upper-bound} that
\(c_{\lambda,\mu} \le |\SYT_\lambda(\mu)| \), where the equality holds
for all \( \mu \) if and only if \( \lambda \) has distinct parts.
When \( \lambda \) has equal parts, we need to find a condition on the
elements in \( \SYT_\lambda(\mu) \) such that \( c_{\lambda,\mu} \) is
equal to the number of tableaux satisfying this condition.

\subsection{The main result}
\label{sec:main-result}

In order to state our main result, we need some definitions. First, we
introduce a total order \( \le \) on the set of all standard Young
tableaux of a partition shape as follows.
The purpose of the order is to list all standard Young tableaux,
and the results in this section are independent of the choice of the order.

\begin{defn}\label{def:3}
  For two standard Young tableaux \( P \) and \( Q \) of partition
  shape, we define \( P\le Q \) if one of the following conditions
  holds:
\begin{itemize}
\item The size of \( P \) is less than the size of \( Q \).
\item The sizes of \( P \) and \( Q \) are equal, and the reading word
  of \( P \) is lexicographically at most that of \( Q \).
\end{itemize}
We write \( P<Q \) to mean \( P\le Q \) and \( P\ne Q \).
\end{defn}

For a nonnegative integer \( n \), we define
\begin{align*}
  \SYT^{=}(2n) &= \{T\in \SYT(2n): T_{[n]} = T^{[n+1,2n]}\}.
\end{align*}
The following lemma is the key ingredient of our approach.
We postpone its proof to \Cref{subsec:nn-bijections}.

\begin{lem}\label{lem:xi}
  There is a bijection
  \[
    \xi: \SYT^=(2n) \to \bigsqcup_{\substack{\lambda\vdash n \\ \mu\vdash 2n}}
    \SYT(\lambda) \times \YDT(\lambda^\square,\mu)
  \]
  such that if \( \xi(T) = (U,D) \), then \( U = T_{[n]} \) and the
  weight of \( D \) is equal to \( \sh(T) \).
\end{lem}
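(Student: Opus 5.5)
We will construct \(\xi\) as a composite of three bijections: tableau switching, the rectification bijection \(\Psi_U\) of \Cref{lem:lr-rectification}, and van Leeuwen's bijection between Littlewood--Richardson tableaux and Yamanouchi domino tableaux.

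\textbf{Step 1: split \(T\).} Let \(T\in\SYT^=(2n)\), put \(U=T_{[n]}\in\SYT(\lambda)\) with \(\lambda\vdash n\), and let \(\mu=\sh(T)\). The skew part \(S:=\st(T_{[n+1,2n]})\) lies in \(\SYT(\mu/\lambda)\). The defining condition of \(\SYT^=(2n)\) says exactly that \(\rect(S)=T^{[n+1,2n]}=U\). Hence \(S\in\SYT^U(\mu/\lambda)\). Conversely, any pair \((U,S)\) with \(U\in\SYT(\lambda)\) and \(S\in\SYT^U(\mu/\lambda)\) glues back to a unique \(T\in\SYT^=(2n)\) of shape \(\mu\). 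This gives a bijection
\[
\SYT^=(2n)\longleftrightarrow\bigsqcup_{\lambda\vdash n,\ \mu\vdash 2n}\ \{U\}\times\SYT^{U}(\mu/\lambda),\qquad U\in\SYT(\lambda).
\]

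\textbf{Step 2: pass to Littlewood--Richardson tableaux.} For fixed \(U\), apply \(\Psi_U\) from \Cref{lem:lr-rectification}. This identifies \(\SYT^U(\mu/\lambda)\) with \(\LR(\mu/\lambda,\lambda)\). The resulting set no longer depends on \(U\), so \(T\) now corresponds to a triple \((U,\mu,L)\) with \(L\in\LR(\mu/\lambda,\lambda)\).

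\textbf{Step 3: pass to domino tableaux.} We need, for each \(\lambda\vdash n\) and \(\mu\vdash 2n\), a bijection
\[
\LR(\mu/\lambda,\lambda)\to\YDT(\lambda^\square,\mu).
\]
Such a bijection is available from van Leeuwen's work \cite{vanLeeuwen1999,vanLeeuwen2000} relating Yamanouchi domino tableaux of shape \(\lambda^\square\) and weight \(\mu\) to LR tableaux. Here \(|\LR(\mu/\lambda,\lambda)|=c^\mu_{\lambda\lambda}\), the coefficient of \(s_\mu\) in \(s_\lambda^2=h_2[s_\lambda]+e_2[s_\lambda]\). By \Cref{thm:CL} and its sign-twisted companion, this coefficient also equals \(|\YDT(\lambda^\square,\mu)|\).

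If no explicit bijection is convenient, we will simply fix one for each pair \((\lambda,\mu)\) using this cardinality identity. The lemma only requires \(U=T_{[n]}\) and \(\mathrm{wt}(D)=\sh(T)\), and both properties survive any such choice, since \(\mu\) is recorded by both sides. Composing Steps 1--3 gives \(\xi(T)=(U,D)\) with the required properties.

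\textbf{Main obstacle.} The substantive step is the cardinality identity \(|\LR(\mu/\lambda,\lambda)|=|\YDT(\lambda^\square,\mu)|\), or equivalently an explicit bijection realizing it. We would first try van Leeuwen's domino-insertion and rectification correspondence, which turns a Yamanouchi domino tableau of the doubled shape into a pair of LR data. If that does not fit directly, we would fall back on the Carr\'e--Leclerc identity \(s_\lambda^2=\sum_\mu|\YDT(\lambda^\square,\mu)|\,s_\mu\), which underlies \Cref{thm:CL} with the spin parity condition dropped.

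Later arguments will presumably need \(\xi\) to be compatible with the spin statistic. For that reason we would prefer the explicit van Leeuwen bijection over an arbitrary matching of equinumerous sets.
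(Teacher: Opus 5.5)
Your Steps 1 and 2 are exactly the paper's maps \(\vartheta\) and \(\Psi_U\). The difference is in Step 3.

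The paper builds the map \(\LR(\mu/\lambda,\lambda)\to\YDT(\lambda^\square,\mu)\) explicitly as \(\Phi_0\circ\Omega\). The first map \(\Omega\) sends \(L\), which has \(a_{l,k}\) entries \(k\) in row \(l\), to the tableau of shape \(\lambda*\lambda\) whose lower-left component has \(a_{l,k}\) entries \(l\) in row \(k\) and whose upper-right component is \(1_\lambda\). Van Leeuwen's \(\Phi_0=\theta^{k-1}\circ d\) then lays the two components out as vertical and horizontal dominoes around \(\delta_k\), and slides open chains until the staircase is gone. Your appeal to ``van Leeuwen's bijection'' skips the intermediate map \(\Omega\). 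His domino correspondence starts from a single LR tableau in \(\LR(\lambda*\lambda,\mu)\), whose upper-right component is forced to be \(1_\lambda\). It does not start from \(\LR(\mu/\lambda,\lambda)\), nor from ``a pair of LR data''.

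Your fallback, however, is a complete and genuinely different proof of the lemma as stated. You get \(|\LR(\mu/\lambda,\lambda)|=c^\mu_{\lambda\lambda}=|\YDT(\lambda^\square,\mu)|\) from the unrefined Carr\'e--Leclerc count, or from \Cref{thm:CL} together with its \(e_2\) companion, neither of which is stated in the paper. Any matching then gives a \(\xi\) with \(U=T_{[n]}\) and weight \(\sh(T)\).

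Your worry about later compatibility is unfounded. Since \(\spin(T)\) is defined as \(\spin(D)\) for \(\xi(T)=(U,D)\), the proof of \Cref{prop:SYT-sp}, and hence \Cref{thm:nn-final-single-sum}, works verbatim for any such \(\xi\).

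What the explicit route buys is canonicity. With an arbitrary matching, the set \(\SYT_{(n,n)}^{\spin}(\mu)\) would depend on unrecorded choices. The theorem would still hold but would give no computable rule. The spin values in \Cref{fig:nn-n3-example}, and the open problem of describing \(\spin(T)\) directly, would lose their meaning. The counting route buys brevity: it needs only the identity \(|\YDT(\lambda^\square,\mu)|=c^\mu_{\lambda\lambda}\), not the bijections \(\Omega\) and \(\Phi_0\).
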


The bijection \( \xi \) in \Cref{lem:xi} allows us to define a spin
statistic on \( \SYT^=(2n) \).

\begin{defn}\label{def:4}
  For \( T\in \SYT^=(2n) \) with \( \xi(T) = (U,D) \), we define
  the \emph{spin} of \( T \) by
  \[
    \spin(T):=\spin(D).
  \]
\end{defn}

Now we state our main result whose proof is given in \Cref{sec:proof-main}.

\begin{thm}\label{thm:nn-final-single-sum}
We have
\[
\ch(L_{(n,n)})
=
\sum_{\mu\vdash 2n}
|\SYT_{(n,n)}^{<}(\mu)\sqcup \SYT_{(n,n)}^{\spin}(\mu)|
s_\mu,
\]
where
\begin{align*}
 \SYT_{(n,n)}^{<}(\mu) &= \{T\in \SYT_{(n,n)}(\mu): T_{[n]} < T^{[n+1,2n]}\},\\
    \SYT_{(n,n)}^{\spin}(\mu) &= 
    \left\{T\in \SYT_{(n,n)}(\mu): T_{[n]}=T^{[n+1,2n]},\, \spin(T)\equiv n \pmod 2
\right\}.
  \end{align*}
In other words,
\( \SYT_{(n,n)}^{<}(\mu)\sqcup \SYT_{(n,n)}^{\spin}(\mu) \) is a refined
\( ((n,n),\mu) \)-Thrall subset.
\end{thm}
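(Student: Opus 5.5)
We start from \eqref{eq:ch(L_la)}, which gives \(\ch(L_{(n,n)})=h_2[\ch(L_n)]\). By \Cref{thm:KW}, write \(f:=\ch(L_n)=\sum_{\lambda\vdash n} a_\lambda s_\lambda\), where \(a_\lambda=|\mathcal A_\lambda|\) and \(\mathcal A_\lambda=\{U\in\SYT(\lambda):\maj(U)\equiv1\pmod n\}\).

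For a sum of Schur functions, the symmetric square expands as
\[
h_2\Bigl[\sum_{i} g_i\Bigr]=\sum_i h_2[g_i]+\sum_{i<j} g_ig_j .
\]
We apply this with the \(g_i\) running over the list of tableaux \(U\in\bigsqcup_\lambda \mathcal A_\lambda\), each contributing one copy of \(s_{\sh(U)}\). We order this list by the total order of \Cref{def:3}. This gives
\[
\ch(L_{(n,n)})=\sum_{U<U'} s_{\sh(U)}s_{\sh(U')}+\sum_{U} h_2[s_{\sh(U)}],
\]
where \(U,U'\) range over \(\bigsqcup_\lambda\mathcal A_\lambda\).

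\textbf{The cross terms.} Apply \Cref{lem:block-product-general} with \(t=2\) and singleton sets \(\mathcal A_1=\{U\}\), \(\mathcal A_2=\{U'\}\). The coefficient of \(s_\mu\) in \(s_{\sh(U)}s_{\sh(U')}\) is then the number of \(T\in\SYT(\mu)\) with \(T_{[n]}=T^{[1,n]}=U\) and \(T^{[n+1,2n]}=U'\). Summing over pairs \(U<U'\) in the list, the condition becomes: both \(T_{[n]}\) and \(T^{[n+1,2n]}\) lie in \(\bigsqcup_\lambda\mathcal A_\lambda\), and \(T_{[n]}<T^{[n+1,2n]}\). By \Cref{lem:rect-des}, membership of both in \(\bigsqcup_\lambda\mathcal A_\lambda\) is exactly the condition \(T\in\SYT_{(n,n)}(\mu)\), since \(\maj_{(n,n),1}(T)=\maj(T_{[n]})\) and \(\maj_{(n,n),2}(T)=\maj(T^{[n+1,2n]})\). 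Hence the cross terms contribute exactly \(\sum_\mu|\SYT^{<}_{(n,n)}(\mu)|\,s_\mu\).

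\textbf{The diagonal terms.} By \Cref{thm:CL}, the coefficient of \(s_\mu\) in \(h_2[s_{\sh(U)}]\) is \(|\{D\in\YDT(\sh(U)^\square,\mu):\spin(D)\equiv n \pmod 2\}|\). Summing over \(U\), the diagonal part has \(s_\mu\)-coefficient
\[
\bigl|\{(U,D): U\in\textstyle\bigsqcup_\lambda\mathcal A_\lambda,\ D\in\YDT(\sh(U)^\square,\mu),\ \spin(D)\equiv n \pmod 2\}\bigr|.
\]
Now transport these pairs through \(\xi^{-1}\) from \Cref{lem:xi}. A pair \((U,D)\) corresponds to a unique \(T\in\SYT^=(2n)\) with \(T_{[n]}=U\), \(\sh(T)=\wt(D)=\mu\), and \(\spin(T)=\spin(D)\). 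Since \(T^{[n+1,2n]}=T_{[n]}=U\in\mathcal A_{\sh(U)}\), both block-major conditions hold, so \(T\in\SYT_{(n,n)}(\mu)\). Conversely, every \(T\in\SYT^{\spin}_{(n,n)}(\mu)\) arises this way. The diagonal part therefore contributes \(\sum_\mu|\SYT^{\spin}_{(n,n)}(\mu)|\,s_\mu\).

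Adding the two contributions gives the formula. The two sets are disjoint because one requires \(T_{[n]}<T^{[n+1,2n]}\) and the other \(T_{[n]}=T^{[n+1,2n]}\). Both lie in \(\SYT_{(n,n)}(\mu)\), so their union is a refined Thrall subset.

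\textbf{Main obstacle.} The argument above is formal once \Cref{lem:xi} is available, so the real work is constructing \(\xi\). The plan is as follows. For fixed \(U\in\SYT(\lambda)\), the set of \(T\) with \(T_{[n]}=U\) and \(T^{[n+1,2n]}=U\) is in bijection with \(\SYT^U(\mu/\lambda)\), via \(T\mapsto\) the skew part standardized. The map \(\Psi_U\) of \Cref{lem:lr-rectification} sends this to \(\LR(\mu/\lambda,\lambda)\). Finally, we compose with a van Leeuwen / Carré--Leclerc bijection between \(\LR(\mu/\lambda,\lambda)\) and \(\YDT(\lambda^\square,\mu)\), using the equality of both cardinalities with \(c^\mu_{\lambda\lambda}\) realized through domino insertion. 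This last identification is where care is needed: it must be a genuine bijection, not just an equality of counts, so that \(\spin\) is well defined on \(T\).
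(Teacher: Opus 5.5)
Your proof is correct. The diagonal part is the same as the paper's proof of \Cref{prop:SYT-sp}: \Cref{thm:CL} transported along $\xi$. The off-diagonal part, however, follows a genuinely different and shorter route.

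The paper uses $h_2=(p_1^2+p_2)/2$ to write $\ch(L_{(n,n)})=\sum_\lambda a_\lambda h_2[s_\lambda]+\tfrac12\bigl(\ch(L_n)^2-\sum_\lambda a_\lambda s_\lambda^2\bigr)$. It then identifies the bracket with $\sum_\mu|\SYT^{\ne}_{(n,n)}(\mu)|\,s_\mu$, using \Cref{lem:prod} together with a separate bijection for $\SYT^{=}_{(n,n)}(\mu)$ based on \Cref{lem:lr-rectification}. Finally it has to halve this set, which requires the fixed-point-free involution $\iota$ of \Cref{lem:two-row-swap}, built from tableau switching and \Cref{prop:ts}.

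You instead expand $h_2$ over the ordered list of Kra\'skiewicz--Weyman tableaux. The cross terms then arrive already indexed by pairs $U<U'$. Applying \Cref{lem:block-product-general} to the singletons $\{U\}$ and $\{U'\}$ computes each $s_{\sh(U)}s_{\sh(U')}$ as the number of $T$ with $(T_{[n]},T^{[n+1,2n]})=(U,U')$. This removes the need for the halving step, the involution, and tableau switching. It also makes it evident why any total order works. What the paper's route gives in return is an explicit involution on $\SYT^{\ne}_{(n,n)}(\mu)$ that exchanges the roles of the two halves, a bijective symmetry your count never needs.

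A small remark on your last paragraph. The proof of the theorem uses only the two properties of $\xi$ recorded in \Cref{lem:xi}. For the identity itself, any bijection with those properties would do, and the equality $|\SYT^U(\mu/\lambda)|=|\YDT(\lambda^\square,\mu)|$ already guarantees one exists. The explicit chain $\Psi_U$, $\Omega$, $\Phi_0$ is what makes $\spin(T)$ a canonical statistic rather than an arbitrary choice. Note that the paper inserts $\Omega:\LR(\mu/\lambda,\lambda)\to\LR(\lambda*\lambda,\mu)$ before applying van Leeuwen's $\Phi_0$.
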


\begin{figure}
\centering
\renewcommand{\arraystretch}{1.2}

\begin{tabular}[t]{c@{\qquad}c@{\qquad}c@{\qquad}c}

\begin{tabular}[t]{@{}c@{}}
\hspace{2.7em}$\spin=3$\\[0.3ex]
$T_1 =$~\begin{ytableau}
1 & 3 & 4 & 6\\
2 & 5
\end{ytableau}
\end{tabular}
&
\begin{tabular}[t]{@{}c@{}}
\hspace{2.5em}$\spin=1$\\[0.3ex]
$T_2 =$~\begin{ytableau}
1 & 3 & 6 \\
2 & 4\\
5
\end{ytableau}
\end{tabular}
&
\begin{tabular}[t]{@{}c@{}}
\hspace{2.5em}$\spin=1$\\[0.3ex]
$T_3 =$~\begin{ytableau}
1 & 3 & 6\\
2 \\
4 \\
5
\end{ytableau}
\end{tabular}
&
\begin{tabular}[t]{@{}c@{}}
\hspace{2.3em}$\spin=1$\\[0.3ex]
$T_4 =$~\begin{ytableau}
1 & 3 \\
2 & 4 \\
5 & 6
\end{ytableau}
\end{tabular}

\end{tabular}

\vspace{1em}

\begin{tabular}[t]{c@{\qquad}c@{\qquad}c@{\qquad}c}

\begin{tabular}[t]{@{}c@{}}
\hspace{2.7em}$\spin=2$\\[0.3ex]
$T_5 =$~\begin{ytableau}
1 & 3 & 4 & 6\\
2 \\ 
5
\end{ytableau}
\end{tabular}
&
\begin{tabular}[t]{@{}c@{}}
\hspace{2.5em}$\spin=2$\\[0.3ex]
$T_6 =$~\begin{ytableau}
1 & 3 & 4 \\
2 & 6 \\
5
\end{ytableau}
\end{tabular}
&
\begin{tabular}[t]{@{}c@{}}
\hspace{2.5em}$\spin=2$\\[0.3ex]
$T_7 =$~\begin{ytableau}
1 & 3 & 4\\
2 & 5 & 6
\end{ytableau}
\end{tabular}
&
\begin{tabular}[t]{@{}c@{}}
\hspace{2.3em}$\spin=0$\\[0.3ex]
$T_8 =$~\begin{ytableau}
1 & 3 \\
2 & 6 \\
4 \\
5
\end{ytableau}
\end{tabular}

\end{tabular}
\caption{The tableaux in
\(\bigcup_{\mu\vdash 6}\SYT_{(3,3)}(\mu)\)
satisfying \(T_{[3]} = T^{[4,6]}\), together with their spin values.}
\label{fig:nn-n3-example}
\end{figure}

\begin{exam}\label{ex:33}
There are eight tableaux \( T \) in
\( \bigcup_{\mu\vdash 6}\SYT_{(3,3)}(\mu) \)
satisfying \(T_{[3]} = T^{[4,6]} \),
as shown in \Cref{fig:nn-n3-example}, together with their spin values.
Among these, \(T_1,T_2,T_3\), and \(T_4\) have odd spin, and hence satisfy
\(\spin(T)\equiv 3 \pmod 2\). 
These are precisely the tableaux that contribute to the spin part
in \Cref{thm:nn-final-single-sum}, while the remaining four are excluded.
Since \( \SYT_{(3,3)}^{<}(\mu)=\varnothing \) for all \( \mu \vdash 6 \),
by \Cref{thm:nn-final-single-sum}, we obtain
\[
\ch(L_{(3,3)})
= s_{(4,2)}+s_{(3,2,1)}+s_{(3,1,1,1)}+s_{(2,2,2)}.
\]
\end{exam}

By \Cref{prop:prod-L-la}, we can find a refined
\( (\lambda,\mu) \)-Thrall subset using refined
\( ((i^{m_i(\lambda)}),\nu) \)-Thrall subsets for \( i\ge1 \) and
\( \nu\vdash i \cdot m_i(\lambda) \). Hence, applying this to combine the results of 
\Cref{thm:KW,thm:nn-final-single-sum}, \eqref{eq:one-col}, and \eqref{eq:two-col},
which are the one-row case, the two-row case, the one-column case, and
the two-column case, respectively, we obtain the following theorem. This solves
Thrall's problem for the partitions \( \lambda \) in which every
part greater than \( 2 \) may appear at most twice.

\begin{thm}\label{thm:at-most-two-with-ones}
  Let \(\lambda=(a_1^{k_1},\dots,a_{t-1}^{k_{t-1}},a_{t}^{k_{t}})\) be a partition of
  \( n \) such that \(a_1>\cdots>a_{t-2}>a_{t-1}=2>a_{t}=1\), \(k_j\in \{1,2\}\) for all
  \( j\in [t-2] \), and \( k_{t-1},k_t\ge0 \). 
  For each \(j\in[t]\), set
\[
\lambda^{(j)}=(a_j^{k_j}),\qquad
N_j=k_1a_1+\cdots+k_ja_j,\qquad
I_j=[N_{j-1}+1,N_j],
\]
where \( N_0=0 \). For \( j\in [t] \) and \(\nu\vdash |\lambda^{(j)}|\), define
\[
\mathcal A(\lambda^{(j)},\nu)=
\begin{cases}
\SYT_{(a_j)}(\nu),
& \text{if } j\le t-2 \text{ and } k_j=1,\\[4pt]
\SYT_{(a_j,a_j)}^{<}(\nu)\sqcup
\SYT_{(a_j,a_j)}^{\spin}(\nu),
& \text{if } j\le t-2 \text{ and } k_j=2,\\[4pt]
\{U\in\SYT_{(2^{k_{t-1}})}(\nu):\Des(U)=\maxDes(\nu)\},
& \text{if } j=t-1,\\
\{U\in\SYT(\nu):\Des(U)=\emptyset\},
& \text{if } j=t.
\end{cases}
\]
Then for any \(\mu\vdash n\),
\[
\left\{
T\in \SYT_\lambda(\mu): T^{I_j}\in
\mathcal A(\lambda^{(j)},\sh(T^{I_j})) \text{ for all }j
\right\}
\]
is a refined \((\lambda,\mu)\)-Thrall subset. In other words, we have
\[
\ch(L_\lambda)
=
\sum_{\mu\vdash n}
\left|
\left\{
T\in \SYT_\lambda(\mu): T^{I_j}\in
\mathcal A(\lambda^{(j)},\sh(T^{I_j})) \text{ for all }j
\right\}
\right|
s_\mu.
\]
\end{thm}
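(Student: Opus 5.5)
This theorem is essentially a direct application of \Cref{prop:prod-L-la}, so the plan is to verify its hypotheses. First, decompose \(\lambda=\lambda^{(1)}\cup\cdots\cup\lambda^{(t)}\) with \(\lambda^{(j)}=(a_j^{k_j})\). Since \(a_1>\cdots>a_t\), the smallest part of \(\lambda^{(i)}\) exceeds the largest part of \(\lambda^{(i+1)}\). Some care is needed when \(k_{t-1}=0\) or \(k_t=0\). In that case \(\lambda^{(j)}=\emptyset\), \(I_j=\emptyset\), and the only \(\nu\) is \(\emptyset\), for which \(\mathcal A(\emptyset,\emptyset)=\{\emptyset\}\) (using \(\LL_\emptyset\) trivial). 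I would either drop these empty blocks from the decomposition, or observe that both \Cref{lem:block-product-general} and the proposition go through trivially with an empty block. With \(n_j=k_ja_j\), the intervals \(I_j\) agree with those in \Cref{prop:prod-L-la}.

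The main step is to check that each \(\mathcal A(\lambda^{(j)},\nu)\) is a refined \((\lambda^{(j)},\nu)\)-Thrall subset. Both requirements must be checked: the subset lies in \(\SYT_{\lambda^{(j)}}(\nu)\), and its cardinality equals \(c_{\lambda^{(j)},\nu}\). I would handle the four cases in turn.

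\begin{itemize}
\item \(k_j=1\), \(j\le t-2\). Here \(\SYT_{(a_j)}(\nu)\) is exactly the Kra\'skiewicz--Weyman set, so \Cref{thm:KW} (equivalently \Cref{thm:distinct}) gives the claim.
\item \(k_j=2\). This is precisely \Cref{thm:nn-final-single-sum} with \(n=a_j\).
\item \(j=t\) (the ones). Every congruence modulo \(1\) is automatic, so \(\SYT_{(1^{k_t})}(\nu)=\SYT(\nu)\), and \eqref{eq:one-col} gives the count.
\item \(j=t-1\) (the twos). The block-major condition for \((2^{k})\) says that for each block \(\{2i-1,2i\}\) the block-major index \(\maj_{(2^k),i}\) is \(\equiv 1\pmod 2\), i.e.\ equals \(1\). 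This holds if and only if \(2i-1\in\Des(U)\). So \(\SYT_{(2^k)}(\nu)\) consists of the tableaux with \(\{1,3,\dots,2k-1\}\subseteq\Des(U)\). Hence the set \(\mathcal A\) given in the statement coincides with the set in \eqref{eq:two-col}, and \eqref{eq:2^n} supplies the count. This identification is the only mildly nontrivial verification.
\end{itemize}

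Finally, apply \Cref{prop:prod-L-la}, which yields that the displayed set is a refined \((\lambda,\mu)\)-Thrall subset. The character identity then follows from the definition of a Thrall subset, since its size is \(c_{\lambda,\mu}\). I do not expect a real obstacle, since all the heavy lifting is in \Cref{thm:nn-final-single-sum}. The points needing care are the bookkeeping for empty blocks and the check that the two-column set sits inside \(\SYT_{(2^{k})}(\nu)\).
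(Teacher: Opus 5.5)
Your proposal is correct and matches the paper's argument. The paper obtains the theorem by applying \Cref{prop:prod-L-la} to the blocks \((a_j^{k_j})\), using \Cref{thm:KW}, \Cref{thm:nn-final-single-sum}, \eqref{eq:two-col}, and \eqref{eq:one-col} as the refined Thrall subsets for the one-row, two-row, two-column, and one-column blocks. Your checks that \(\SYT_{(2^k)}(\nu)\) consists exactly of the tableaux with \(\{1,3,\dots,2k-1\}\subseteq\Des(U)\), that \(\SYT_{(1^k)}(\nu)=\SYT(\nu)\), and that empty blocks are harmless fill in details the paper leaves implicit.
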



\subsection{Proof of \Cref{thm:nn-final-single-sum}}
\label{sec:proof-main}

In this subsection, we prove \Cref{thm:nn-final-single-sum} assuming
\Cref{lem:xi}. For simplicity, we define
\begin{equation}\label{eq:a_la}
  a_\lambda = |\{T\in\SYT(\lambda): \maj(T)\equiv 1 \pmod n\}|
  = |\SYT_{(n)}(\lambda)|.
\end{equation}
Then, by \Cref{thm:KW}, we have
\begin{equation}\label{eq:3}
  \ch(L_n)=\sum_{\lambda \vdash n} a_{\lambda} s_{\lambda}.
\end{equation}

The following lemma is the starting point of our
investigation of \( \ch(L_{(n,n)}) \).

\begin{lem}\label{lem:nn}
We have
\[
\ch(L_{(n,n)})
= \sum_{\lambda \vdash n} a_{\lambda}h_2[s_{\lambda}]
+ \frac{1}{2}\left( \ch(L_n)^2 -\sum_{\lambda \vdash n}a_{\lambda}s_{\lambda}^2 \right).
\]
\end{lem}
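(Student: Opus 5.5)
By \eqref{eq:ch(L_la)} with $m_n((n,n))=2$, we have $\ch(L_{(n,n)})=h_2[\ch(L_n)]$. The plan is to expand this plethysm using \eqref{eq:3} and two standard facts about $h_2$ applied to a sum.

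The key identity is that for $f,g\in\Lambda_\QQ$,
\[
h_2[f+g]=h_2[f]+h_2[g]+fg .
\]
This follows from $h_2=\tfrac12(p_1^2+p_2)$. Since $\varphi_{f+g}$ sends $p_1\mapsto f+g$ and $p_2\mapsto p_2[f]+p_2[g]$, we get $h_2[f+g]=\tfrac12\bigl((f+g)^2+p_2[f]+p_2[g]\bigr)$, which rearranges to the identity. Iterating, for any $f_1,\dots,f_m$ we get
\[
h_2\Bigl[\sum_\lambda f_\lambda\Bigr]=\sum_\lambda h_2[f_\lambda]+\sum_{\lambda<\lambda'} f_\lambda f_{\lambda'}
=\sum_\lambda h_2[f_\lambda]+\tfrac12\Bigl(\bigl(\textstyle\sum_\lambda f_\lambda\bigr)^2-\sum_\lambda f_\lambda^2\Bigr).
\]

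Next, $a_\lambda s_\lambda$ is a nonnegative integer multiple of $s_\lambda$, so we apply the same identity to $a_\lambda$ copies of $s_\lambda$:
\[
h_2[a_\lambda s_\lambda]=a_\lambda h_2[s_\lambda]+\binom{a_\lambda}{2}s_\lambda^2 .
\]
Alternatively, we can compute directly: $p_2[a s]=a\,p_2[s]$, so $h_2[as]=\tfrac12(a^2s^2+a\,p_2[s])=a\,h_2[s]+\tfrac{a^2-a}{2}s^2$.

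Now substitute $f_\lambda=a_\lambda s_\lambda$ and $\sum_\lambda f_\lambda=\ch(L_n)$. This gives
\[
\ch(L_{(n,n)})=\sum_\lambda a_\lambda h_2[s_\lambda]+\sum_\lambda\frac{a_\lambda^2-a_\lambda}{2}s_\lambda^2+\frac12\Bigl(\ch(L_n)^2-\sum_\lambda a_\lambda^2 s_\lambda^2\Bigr).
\]
The $a_\lambda^2 s_\lambda^2$ terms cancel, leaving $\sum_\lambda a_\lambda h_2[s_\lambda]+\tfrac12\bigl(\ch(L_n)^2-\sum_\lambda a_\lambda s_\lambda^2\bigr)$, which is the claimed formula.

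There is no real obstacle. The only point needing care is that plethysm is not linear in the inner argument, so $h_2[a s]\neq a\,h_2[s]$. That is exactly why the correction term $\binom{a_\lambda}{2}s_\lambda^2$ appears, and it must be tracked explicitly through the computation via the power-sum description $\varphi_f(p_r)=p_r[f]$.
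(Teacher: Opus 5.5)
Your proof is correct and rests on the same two facts as the paper's proof: $h_2=\tfrac12(p_1^2+p_2)$ and the linearity of $f\mapsto p_2[f]$. The paper applies that linearity to all of $\ch(L_n)=\sum_\lambda a_\lambda s_\lambda$ at once and then substitutes $p_2=2h_2-p_1^2$, which avoids your detour through $h_2[f+g]=h_2[f]+h_2[g]+fg$ and the $a_\lambda^2 s_\lambda^2$ terms you later cancel.
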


\begin{proof}
Since \( h_2= (p_1^2 + p_2)/2 \), we have
\[
  \ch(L_{(n,n)})
=h_2[\ch(L_n)]
=\frac{1}{2}\left( \ch(L_n)^2 + p_2[\ch(L_n)] \right)
=\frac{1}{2}\left( \ch(L_n)^2 + \sum_{\lambda \vdash n}a_{\lambda} p_2[s_{\lambda}]\right).
\]
Since \( p_2= 2h_2-p_1^2 \), we have
\[
 \sum_{\lambda \vdash n}a_{\lambda} p_2[s_{\lambda}]
=\sum_{\lambda \vdash n}a_{\lambda}(2h_2[s_{\lambda}]-s_{\lambda}^2).
\]
Combining the above two identities gives the lemma.
\end{proof}

By \Cref{lem:nn}, to prove \Cref{thm:nn-final-single-sum},
it suffices to prove the following two propositions.

\begin{prop}\label{prop:SYT-sp}
 We have
 \[
\sum_{\lambda\vdash n} a_\lambda h_2[s_\lambda]
=
\sum_{\mu\vdash 2n}
\left| \SYT_{(n,n)}^{\spin}(\mu) \right|
s_\mu.
\]
\end{prop}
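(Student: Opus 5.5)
Expand the left-hand side with \Cref{thm:CL} and convert the result into a count of tableaux using the bijection \(\xi\) of \Cref{lem:xi}. By \Cref{thm:CL},
\[
\sum_{\lambda\vdash n} a_\lambda h_2[s_\lambda]
=\sum_{\mu\vdash 2n}\sum_{\lambda\vdash n} a_\lambda\,
\bigl|\{D\in\YDT(\lambda^\square,\mu):\spin(D)\equiv n \pmod 2\}\bigr|\, s_\mu .
\]
By \eqref{eq:a_la}, \(a_\lambda=|\SYT_{(n)}(\lambda)|\). The coefficient of \(s_\mu\) therefore counts pairs \((U,D)\) with \(U\in\SYT(\lambda)\), \(\maj(U)\equiv 1\pmod n\), \(D\in\YDT(\lambda^\square,\mu)\), and \(\spin(D)\equiv n\pmod 2\), summed over \(\lambda\vdash n\).

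Next, pull these pairs back along \(\xi\). By \Cref{lem:xi}, \(\xi\) restricts to a bijection from \(\{T\in\SYT^=(2n):\sh(T)=\mu\}\) onto \(\bigsqcup_{\lambda\vdash n}\SYT(\lambda)\times\YDT(\lambda^\square,\mu)\), because the weight of \(D\) equals \(\sh(T)\). Under this bijection \(U=T_{[n]}\), and by \Cref{def:4} \(\spin(T)=\spin(D)\). So the coefficient of \(s_\mu\) equals
\[
\bigl|\{T\in\SYT(\mu):T_{[n]}=T^{[n+1,2n]},\ \maj(T_{[n]})\equiv 1\pmod n,\ \spin(T)\equiv n\pmod 2\}\bigr|.
\]

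It remains to identify this set with \(\SYT^{\spin}_{(n,n)}(\mu)\), which amounts to showing that the congruence condition on \(T_{[n]}\) is equivalent to \(T\in\SYT_{(n,n)}(\mu)\) whenever \(T_{[n]}=T^{[n+1,2n]}\). By \Cref{lem:rect-des}, \(\maj_{(n,n),1}(T)=\maj(T^{[n]})=\maj(T_{[n]})\) and \(\maj_{(n,n),2}(T)=\maj(T^{[n+1,2n]})\). When \(T_{[n]}=T^{[n+1,2n]}\) these two block-major indices coincide, so the single condition \(\maj(T_{[n]})\equiv1\pmod n\) is the same as requiring both \(\maj_{(n,n),i}(T)\equiv 1\pmod n\) for \(i=1,2\). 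This gives the proposition.

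All the difficulty lies in \Cref{lem:xi}, which is assumed here; given that lemma, the only point needing care is the bookkeeping that \(\xi\) preserves the shape/weight data so that the sum over \(\lambda\) and the restriction to a fixed \(\mu\) match.
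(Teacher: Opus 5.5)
Your proposal is correct and follows essentially the same route as the paper: expand with \Cref{thm:CL}, rewrite \(a_\lambda=|\SYT_{(n)}(\lambda)|\), and restrict the bijection \(\xi\) of \Cref{lem:xi} to \(\SYT^{\spin}_{(n,n)}(\mu)\). You also spell out a point the paper leaves implicit. Via \Cref{lem:rect-des}, when \(T_{[n]}=T^{[n+1,2n]}\) the single congruence \(\maj(T_{[n]})\equiv 1\pmod n\) is equivalent to \(T\in\SYT_{(n,n)}(\mu)\), so the restriction of \(\xi\) hits exactly the required set of pairs.
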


\begin{prop}\label{prop:SYT^<}
  We have
\[
\frac{1}{2} \left( \ch(L_n)^2 -\sum_{\lambda \vdash n}a_{\lambda}s_{\lambda}^2 \right)
=\sum_{\mu\vdash 2n} \left| \SYT_{(n,n)}^{<}(\mu)\right|
s_\mu.
\]
\end{prop}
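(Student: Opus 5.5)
The identity will follow from \Cref{lem:block-product-general} with $t=2$ together with a symmetry argument on ordered pairs of tableaux. Write $\ch(L_n)^2=\sum_{\lambda,\nu\vdash n}a_\lambda a_\nu s_\lambda s_\nu$. For each pair $(\lambda,\nu)$ apply \Cref{lem:block-product-general} with $n_1=n_2=n$, $I_1=[n]$, $I_2=[n+1,2n]$, $\mathcal A_1=\SYT_{(n)}(\lambda)$, $\mathcal A_2=\SYT_{(n)}(\nu)$. Summing over all pairs and using \Cref{lem:rect-des}, we get
\[
\ch(L_n)^2=\sum_{\mu\vdash 2n}|\SYT_{(n,n)}(\mu)|\,s_\mu .
\]
This is just \Cref{lem:prod} for $\lambda=(n,n)$.

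Next we handle the diagonal terms. For a fixed $\lambda$ with $\mathcal A=\SYT_{(n)}(\lambda)$, the product $a_\lambda^2 s_\lambda^2$ counts tableaux $T$ whose two pieces $T_{[n]}$ and $T^{[n+1,2n]}$ both lie in $\mathcal A$. We need the finer statement that $a_\lambda s_\lambda^2$ counts those $T$ with $T_{[n]}=T^{[n+1,2n]}\in\mathcal A$. To see this, rerun the proof of \Cref{lem:block-product-general} with a set of pairs $\mathcal B\subseteq \SYT(n)\times\SYT(n)$ in place of the product $\mathcal A_1\times\mathcal A_2$. The RSK/shuffle argument never uses the product structure: the left side $\sum_{(P_1,P_2)\in\mathcal B}s_{\sh P_1}s_{\sh P_2}$ expands as $\sum F_{n,\Des(w_1)}F_{n,\Des(w_2)}$ over words with $(P(w_1),P(w_2))\in\mathcal B$. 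By \eqref{eq:shuffle} and \Cref{lem:rsk-subword} this becomes the sum over $w\in S_{2n}$ with $(P(w)^{I_1},P(w)^{I_2})\in\mathcal B$. This gives the general identity
\[
\sum_{(P_1,P_2)\in\mathcal B}s_{\sh P_1}s_{\sh P_2}=\sum_{\mu}\bigl|\{T\in\SYT(\mu):(T_{[n]},T^{[n+1,2n]})\in\mathcal B\}\bigr|\,s_\mu .
\]

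Now apply this identity to three choices of $\mathcal B$, each inside $\mathcal S:=\bigsqcup_\lambda \SYT_{(n)}(\lambda)^2$:
\begin{itemize}
\item $\mathcal B_<=\{(P_1,P_2)\in\mathcal S:P_1<P_2\}$;
\item $\mathcal B_>=\{(P_1,P_2)\in\mathcal S:P_1>P_2\}$;
\item $\mathcal B_==\{(P,P):P\in\SYT_{(n)}(\lambda)\}$.
\end{itemize}
Here we use that $\le$ in \Cref{def:3} is a total order on tableaux of size $n$, so $\mathcal S=\mathcal B_<\sqcup\mathcal B_>\sqcup\mathcal B_=$.

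The left-hand sides are computed as follows. The swap $(P_1,P_2)\mapsto(P_2,P_1)$ is a bijection $\mathcal B_<\to\mathcal B_>$, and $s_{\sh P_1}s_{\sh P_2}$ is symmetric under it. Hence the left sides for $\mathcal B_<$ and $\mathcal B_>$ coincide. The left side for $\mathcal B_=$ is $\sum_\lambda a_\lambda s_\lambda^2$.

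Combining, and noting that $T\in\SYT_{(n,n)}(\mu)$ is equivalent to $(T_{[n]},T^{[n+1,2n]})\in\mathcal S$ by \Cref{lem:rect-des}, we obtain
\[
\ch(L_n)^2=2\sum_{\mu}|\SYT^<_{(n,n)}(\mu)|\,s_\mu+\sum_\lambda a_\lambda s_\lambda^2 .
\]
Rearranging gives the proposition.

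The main point to check carefully is the generalized form of \Cref{lem:block-product-general} for an arbitrary set of pairs $\mathcal B$. Its proof is the same bijective RSK/shuffle computation: each shuffle $w$ corresponds to a unique pair of words $(\st(w_{I_1}),\st(w_{I_2}))$, and the condition on $w$ depends only on $(P(\st w_{I_1}),P(\st w_{I_2}))$. So the argument goes through verbatim.
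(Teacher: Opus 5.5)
Your proof is correct, but it takes a genuinely different route from the paper. The paper argues in two steps. First, \Cref{lem:nn-reduction} identifies $\sum_\lambda a_\lambda s_\lambda^2$ with $\sum_\mu|\SYT^{=}_{(n,n)}(\mu)|\,s_\mu$, using the Littlewood--Richardson rule and van Leeuwen's bijection $\Psi_U$ (\Cref{lem:lr-rectification}) to get $c_{\lambda,\lambda}^\mu=|\SYT^U(\mu/\lambda)|$. Second, the tableau-switching involution $\iota$ of \Cref{lem:two-row-swap}, which exchanges $T_{[n]}$ and $T^{[n+1,2n]}$, shows that exactly half of $\SYT^{\ne}_{(n,n)}(\mu)$ lies in $\SYT^{<}_{(n,n)}(\mu)$.

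You instead observe that the RSK/shuffle proof of \Cref{lem:block-product-general} works verbatim for an arbitrary set $\mathcal B$ of pairs. Each $w\in S_{2n}$ is a shuffle of exactly one pair $(\st(w_{[n]}),\st(w_{[n+1,2n]}))$, and \Cref{lem:rsk-subword} converts the condition on $w$ into one on $P(w)$. This single tool replaces both the LR/van Leeuwen input and tableau switching. You even need less than you wrote, since purely algebraically
\[
\ch(L_n)^2-\sum_{\lambda\vdash n}a_\lambda s_\lambda^2
=\sum_{\substack{(P_1,P_2)\in\mathcal S\\ P_1\ne P_2}}s_{\sh P_1}s_{\sh P_2}
=2\sum_{(P_1,P_2)\in\mathcal B_<}s_{\sh P_1}s_{\sh P_2}.
\]
So one application of the generalized lemma to $\mathcal B_<$, together with \Cref{lem:rect-des}, finishes the proof.

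What your route gives up is an explicit bijection on tableaux of a fixed shape $\mu$. The balance between the cases $T_{[n]}<T^{[n+1,2n]}$ and $T_{[n]}>T^{[n+1,2n]}$ comes out of a symmetric-function identity rather than from a map like $\iota$. Moreover, the paper's correspondence $T\mapsto(U,S)$ with $S\in\SYT^U(\mu/\lambda)$ is the same map $\vartheta$ it reuses to build $\xi$ for the spin part, so its heavier machinery is shared with \Cref{prop:SYT-sp}.

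One slip to fix. As written, $\mathcal S=\bigsqcup_\lambda\SYT_{(n)}(\lambda)^2$ contains only pairs of equal shape. That would drop the cross terms $a_\lambda a_\nu s_\lambda s_\nu$ with $\lambda\ne\nu$, and it would make your claimed equivalence with $T\in\SYT_{(n,n)}(\mu)$ false. You need $\mathcal S=\bigl(\bigsqcup_{\lambda\vdash n}\SYT_{(n)}(\lambda)\bigr)^2$, the set of all ordered pairs, which is clearly what the rest of your argument uses.
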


The rest of this subsection is devoted to proving the above two
propositions. We first prove \Cref{prop:SYT-sp}.

\begin{proof}[Proof of \Cref{prop:SYT-sp}]
 By \eqref{eq:a_la} and \Cref{thm:CL}, we have
\[
\sum_{\lambda\vdash n} a_\lambda h_2[s_\lambda]
= \sum_{\mu\vdash 2n} \left| \bigsqcup_{\lambda\vdash n} \SYT_{(n)}(\lambda) \times Y(\lambda,\mu)  \right| s_\mu,
\]
where
\[
  Y(\lambda,\mu)= \{D\in\YDT(\lambda^\square,\mu): \spin(D) \equiv n \pmod{2}\}.
\]
Thus, it suffices to find a bijection
\[
 \eta: \SYT_{(n,n)}^{\spin}(\mu) \to \bigsqcup_{\lambda\vdash n} \SYT_{(n)}(\lambda) \times Y(\lambda,\mu) .
\]
The restriction of the map \( \xi \) in \Cref{lem:xi} to
\( \SYT_{(n,n)}^{\spin}(\mu) \) induces such a bijection.
\end{proof}

To prove \Cref{prop:SYT^<}, we need some definitions and lemmas.
For \( \mu\vdash 2n \), we define
\begin{align*}
  \SYT_{(n,n)}^{\ne}(\mu) &= \{T\in \SYT_{(n,n)}(\mu): T_{[n]} \ne T^{[n+1,2n]}\},\\
  \SYT_{(n,n)}^{=}(\mu) &= \{T\in \SYT_{(n,n)}(\mu): T_{[n]} = T^{[n+1,2n]}\}.
\end{align*}

\begin{lem}\label{lem:nn-reduction}
  We have
\[
\ch(L_n)^2 -\sum_{\lambda \vdash n}a_{\lambda}s_{\lambda}^2
=\sum_{\mu\vdash 2n}
\left| \SYT_{(n,n)}^{\ne}(\mu) \right|
s_\mu.
\]
\end{lem}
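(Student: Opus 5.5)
The plan is to split $\SYT_{(n,n)}(\mu)$ into the disjoint pieces $\SYT_{(n,n)}^{\ne}(\mu)$ and $\SYT_{(n,n)}^{=}(\mu)$ and compute the generating function of each piece separately. For the whole set, \Cref{lem:prod} with $\lambda=(n,n)$ gives
\[
\ch(L_n)^2=\sum_{\mu\vdash 2n}|\SYT_{(n,n)}(\mu)|\,s_\mu .
\]
So the lemma follows once we show
\[
\sum_{\lambda\vdash n}a_\lambda s_\lambda^2=\sum_{\mu\vdash 2n}|\SYT_{(n,n)}^{=}(\mu)|\,s_\mu ,
\]
and subtract this from the previous identity.

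To prove this second identity, I expand each term $a_\lambda s_\lambda^2$ using \Cref{lem:block-product-general} with $t=2$, $n_1=n_2=n$, $I_1=[n]$, $I_2=[n+1,2n]$. I choose singletons $\mathcal A_1=\mathcal A_2=\{U\}$ for each $U\in\SYT_{(n)}(\lambda)$. This gives
\[
s_\lambda^2=\sum_{\mu\vdash 2n}\bigl|\{T\in\SYT(\mu):T^{I_1}=U,\ T^{I_2}=U\}\bigr|\,s_\mu .
\]
Recall that $T^{[n]}=T_{[n]}$ by the remark after \Cref{def:1}. Summing over the $a_\lambda=|\SYT_{(n)}(\lambda)|$ choices of $U$ and over all $\lambda\vdash n$ therefore yields
\[
\sum_{\lambda\vdash n}a_\lambda s_\lambda^2=\sum_{\mu\vdash 2n}\bigl|\{T\in\SYT(\mu):T_{[n]}=T^{[n+1,2n]},\ \maj(T_{[n]})\equiv 1 \pmod n\}\bigr|\,s_\mu .
\]

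It remains to identify the set on the right with $\SYT_{(n,n)}^{=}(\mu)$. By \Cref{lem:rect-des}, we have $\maj_{(n,n),1}(T)=\maj(T^{[n]})=\maj(T_{[n]})$ and $\maj_{(n,n),2}(T)=\maj(T^{[n+1,2n]})$. When $T_{[n]}=T^{[n+1,2n]}$, these two block-major indices coincide. Hence the single congruence $\maj(T_{[n]})\equiv1\pmod n$ is equivalent to both congruences defining $\SYT_{(n,n)}(\mu)$, and the set is exactly $\SYT_{(n,n)}^{=}(\mu)$.

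The only delicate point is applying \Cref{lem:block-product-general} with singleton sets $\mathcal A_j$ and then summing over $U$. This requires noting that distinct $U$ give disjoint sets of $T$, which holds because $U$ is determined by $T$ as $U=T_{[n]}$. Beyond that the argument is bookkeeping.
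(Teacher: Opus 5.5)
Your proof is correct. It shares the paper's outer reduction: split $\SYT_{(n,n)}(\mu)$ into its $\ne$ and $=$ parts, handle the whole set with \Cref{lem:prod}, and reduce to $\sum_{\lambda}a_\lambda s_\lambda^2=\sum_{\mu}|\SYT_{(n,n)}^{=}(\mu)|\,s_\mu$. Where you differ is in how you prove that identity.

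The paper compares coefficients of $s_\mu$ and goes through three steps. It writes $c^\mu_{\lambda,\lambda}=|\LR(\mu/\lambda,\lambda)|$ by the Littlewood--Richardson rule. It then converts this to $|\SYT^U(\mu/\lambda)|$ using van Leeuwen's bijection $\Psi_U$ (\Cref{lem:lr-rectification}). Finally it matches each $T$ with the pair $(U,\st(T_{[n+1,2n]}))$.

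You instead apply \Cref{lem:block-product-general} with singleton sets $\mathcal A_1=\mathcal A_2=\{U\}$. This gives $s_\lambda^2=\sum_{\mu}|\{T\in\SYT(\mu):T_{[n]}=U=T^{[n+1,2n]}\}|\,s_\mu$ directly. In effect it re-derives $|\SYT^U(\mu/\lambda)|=c^\mu_{\lambda,\lambda}$ for every $U$ from the RSK subword property and the shuffle product. So you need neither the Littlewood--Richardson rule nor the external citation to van Leeuwen, and your argument matches the style of \Cref{lem:prod} and \Cref{prop:prod-L-la}.

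The paper's route is heavier for this lemma alone. However, it sets up the parametrization $T\leftrightarrow(U,S)$ and the map $\Psi_U$, which the paper reuses later in the bijection $\vartheta$ and the chain \eqref{eq:three-maps} defining $\xi$. In context, that extra machinery costs the paper nothing.
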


\begin{proof}
  By \Cref{lem:prod} with \( \lambda=(n,n) \), we have
\[
\ch(L_n)^2 = \sum_{\mu\vdash 2n} |\SYT_{(n,n)}(\mu)| s_{\mu} .
\]
Thus, it suffices to show that
\begin{equation}\label{eq:6}
\sum_{\lambda \vdash n}a_\lambda s_\lambda^2
= \sum_{\mu\vdash 2n} |\SYT_{(n,n)}^{=}(\mu)| s_{\mu} .
\end{equation}

Fix \(\mu\vdash 2n\). Let \(c_{\alpha,\beta}^\mu\) denote the
Littlewood--Richardson coefficient, that is, the coefficient of
\(s_\mu\) in \(s_{\alpha}s_{\beta}\). Comparing the coefficients of
\(s_\mu\) on both sides, we see that \eqref{eq:6} is equivalent to
\begin{equation}\label{eq:1}
  \sum_{\lambda\vdash n} a_\lambda c_{\lambda,\lambda}^\mu
  = |\SYT_{(n,n)}^{=}(\mu)|.
\end{equation}
Note that \(c_{\lambda,\lambda}^\mu\) is also equal to the coefficient
of \( s_\lambda \) in \( s_{\mu/\lambda} \). Hence, by the
Littlewood--Richardson rule, we have
\(c_{\lambda,\lambda}^\mu = |\LR(\mu/\lambda,\lambda)|\). Then, by
\Cref{lem:lr-rectification}, we obtain
\(c_{\lambda,\lambda}^\mu = |\SYT^U(\mu/\lambda)|\) for any
\( U\in \SYT(\lambda) \). Since \(a_\lambda\) is the number of
tableaux \(U\in \SYT(\lambda)\) with \(\maj(U)\equiv 1 \pmod n\), the
left-hand side of \eqref{eq:1} is equal to the number of pairs
\((U,S)\) such that
\begin{equation}\label{eqn:conditions}
U\in \SYT(\lambda),\qquad \maj(U)\equiv 1 \pmod n,\qquad S\in \SYT^U(\mu/\lambda),
\end{equation}
for some \(\lambda\vdash n\). Thus, to establish \eqref{eq:1}, it
suffices to find a bijection between \(\SYT_{(n,n)}^{=}(\mu)\) and the
set of pairs \((U,S)\) satisfying \eqref{eqn:conditions}.

Given \(T\in \SYT_{(n,n)}^=(\mu)\), let \( U=T_{[n]} \) and
\( S=\st(T_{[n+1,2n]}) \). Then
\(\maj(U)=\maj_{(n,n),1}(T) \equiv 1 \pmod n\), and since
\( T^{[n+1,2n]} = T_{[n]} \), we have \(S\in \SYT^U(\mu/\lambda)\),
where \(\lambda=\sh(U)\). Thus the pair \( (U,S) \) satisfies
\eqref{eqn:conditions}.

Conversely, for any pair \( (U,S) \) satisfying
\eqref{eqn:conditions}, there is a unique \(T\in \SYT(\mu)\) such that
\( U=T_{[n]} \) and \( S=\st(T_{[n+1,2n]}) \). Then
\[
  T^{[n+1,2n]} = \rect(\st(T_{[n+1,2n]})) = \rect(S) = U = T_{[n]}.
\]
 By
\Cref{lem:rect-des}, we have
\(\maj_{(n,n),1}(T)=\maj(U) \equiv 1 \pmod n \) and
\(\maj_{(n,n),2}(T)=\maj(U) \equiv 1 \pmod n \). Thus,
\(T\in \SYT_{(n,n)}^=(\mu) \), and the map \( T\mapsto(U,S) \) is
a desired bijection, completing the proof.
\end{proof}

\begin{lem}\label{lem:two-row-swap}
Let \(\mu\vdash 2n\). There is a fixed-point-free involution
\[
\iota:\SYT_{(n,n)}^{\ne}(\mu)\to \SYT_{(n,n)}^{\ne}(\mu)
\]
such that \(\iota(T)_{[n]}=T^{[n+1,2n]} \) and
\( \iota(T)^{[n+1,2n]}=T_{[n]} \) for all \(T\in \SYT_{(n,n)}^{\ne}(\mu)\).
\end{lem}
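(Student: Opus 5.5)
We construct \(\iota\) by tableau switching, viewing each \(T\in\SYT(\mu)\) as a pair: its first half \(P=T_{[n]}\in\SYT(\lambda)\) with \(\lambda=\sh(P)\vdash n\), and its second half \(S=\st(T_{[n+1,2n]})\in\SYT(\mu/\lambda)\). Since \(T^{[n+1,2n]}=\rect(S)\), the map \(T\mapsto(P,S)\) is a bijection from \(\SYT(\mu)\) onto pairs \((P,S)\) with \(P\in\SYT(\lambda)\), \(S\in\SYT(\mu/\lambda)\), \(\lambda\vdash n\). Shifting the entries of \(S\) by \(n\), we may regard \(S\) as a tableau with entries in \([n+1,2n]\) and apply the switching map of \Cref{def:tableau-switching-explicit}.

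Set \(X(P,S)=(S',P')\). By \Cref{prop:ts}, \(S'=\rect(S)\) has straight shape \(\nu\), and \(P'\) is a skew tableau of shape \(\mu/\nu\) with \(\rect(P')=P\). Here \(\nu\vdash n\) because \(S'\) has \(n\) cells, and the total shape stays \(\mu\) because switching only permutes cells within \(\mu\). Gluing \(S'\) (relabelled to \([n]\)) with \(P'\) (relabelled to \([n+1,2n]\)) gives a standard Young tableau \(\iota(T)\) of shape \(\mu\). It satisfies
\[
\iota(T)_{[n]}=\st(S')=\rect(S)=T^{[n+1,2n]},\qquad \iota(T)^{[n+1,2n]}=\rect(\st(P'))=P=T_{[n]}.
\]
Standardization is harmless here because all tableaux involved are standard with distinct entries, and relabelling commutes with switching.

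It remains to check three things.
\begin{itemize}
\item \textbf{Involution.} Applying the construction to \(\iota(T)\) means computing \(X(S',P')\), which equals \((P,S)\) by the involutivity in \Cref{prop:ts}. Hence \(\iota(\iota(T))=T\).
\item \textbf{Preserves \(\SYT_{(n,n)}^{\ne}(\mu)\).} By \Cref{lem:rect-des}, \(\maj_{(n,n),1}(\iota(T))=\maj(T^{[n+1,2n]})=\maj_{(n,n),2}(T)\equiv 1\), and symmetrically \(\maj_{(n,n),2}(\iota(T))=\maj(T_{[n]})=\maj_{(n,n),1}(T)\equiv 1\pmod n\). The condition \(\iota(T)_{[n]}\ne\iota(T)^{[n+1,2n]}\) is exactly \(T^{[n+1,2n]}\ne T_{[n]}\), which holds by hypothesis.
\item \textbf{Fixed-point-free.} If \(\iota(T)=T\), then \(T_{[n]}=\iota(T)_{[n]}=T^{[n+1,2n]}\), contradicting \(T\in\SYT_{(n,n)}^{\ne}(\mu)\).
\end{itemize}

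The main obstacle is applying \Cref{prop:ts} correctly. That proposition is stated for \(S\in\SSYT(\lambda)\) and \(T\in\SSYT(\mu/\lambda)\), i.e. with the inner tableau of straight shape, whereas the inverse step switches \(S'\) of straight shape \(\nu\) with \(P'\) of shape \(\mu/\nu\). This is the same situation, so the involutivity statement applies directly.

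We must also confirm that after switching, \(S'\) occupies an order ideal \(\nu\) and \(P'\) occupies \(\mu/\nu\), so that gluing them gives a standard tableau of shape \(\mu\). This is part of the standard properties of tableau switching in \cite{BSS1996}: the pair of shapes stays a nested chain \(\emptyset\subseteq\nu\subseteq\mu\). Once this is granted, the remaining verifications are the routine ones above.
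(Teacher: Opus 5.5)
Your proposal is correct and is essentially the paper's own proof: you switch \(T_{[n]}\) with \(\st(T_{[n+1,2n]})\) via \(X\) and glue the results, read off \(\iota(T)_{[n]}=T^{[n+1,2n]}\) and \(\iota(T)^{[n+1,2n]}=T_{[n]}\) from \Cref{prop:ts}, check the block-major conditions with \Cref{lem:rect-des}, and get involutivity and fixed-point-freeness from the involutivity of switching and the hypothesis \(T_{[n]}\ne T^{[n+1,2n]}\). You also state explicitly that \(\iota(T)\) again satisfies the \(\ne\) condition, a step the paper leaves implicit.
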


\begin{proof}
  Consider \(T\in \SYT_{(n,n)}^{\ne}(\mu)\). Let \( U=T_{[n]} \),
  \( S=\st(T_{[n+1,2n]}) \), and \(X(U,S)=(S',U') \), where \( X \) is
  the tableau switching map. By the properties of tableau switching,
  we have \( S'=\rect(S)=T^{[n+1,2n]} \) and \(\rect(U')=U \). 
  We define \(\iota(T)\) to be the standard Young tableau of shape \(\mu\)
  obtained by placing \(S'\) in its shape and \(U'\), with all entries
  increased by \(n\), in the complementary skew shape.

  By construction, we have
\[
\iota(T)_{[n]}=S'=T^{[n+1,2n]},
\qquad
\iota(T)^{[n+1,2n]}=\rect(U')=U=T_{[n]}.
\]
By \Cref{lem:rect-des}, we have
\[
\maj_{(n,n),1}(\iota(T))= \maj(T^{[n+1,2n]})\equiv 1 \pmod n
\]
and
\[
\maj_{(n,n),2}(\iota(T))=\maj(\iota(T)^{[n+1,2n]})=\maj(T_{[n]})\equiv 1 \pmod n.
\]
Thus \(\iota(T)\in \SYT_{(n,n)}^{\ne}(\mu)\). Since tableau switching
is an involution, we have \( X(S',U')=(U,S) \). Therefore, the same
construction applied to \(\iota(T)\) recovers \(T\), so \(\iota\) is
an involution. Since \(S'\neq U\), there are no fixed points of
\( \iota \), which completes the proof.
\end{proof}

Finally, we are ready to prove \Cref{prop:SYT^<}.

\begin{proof}[Proof of \Cref{prop:SYT^<}]
  By \Cref{lem:nn-reduction}, it suffices to show that
  \( \frac{1}{2}|\SYT_{(n,n)}^{\ne}(\mu)|= |\SYT_{(n,n)}^{<}(\mu)| \)
  for every \(\mu\vdash 2n\). The fixed-point-free involution
  \( \iota \) in \Cref{lem:two-row-swap} partitions
  \( \SYT_{(n,n)}^{\ne}(\mu) \) into blocks \( \{T,\iota(T)\} \) of
  size \( 2 \). Since \(\iota(T)_{[n]}=T^{[n+1,2n]} \) and
  \( \iota(T)^{[n+1,2n]}=T_{[n]} \), exactly one of \( T \) and
  \( \iota(T) \) is contained in \( \SYT_{(n,n)}^{<}(\mu) \). This
  implies \( \frac{1}{2}|\SYT_{(n,n)}^{\ne}(\mu)|= |\SYT_{(n,n)}^{<}(\mu)| \),
  as desired.
\end{proof}


\subsection{Proof of \Cref{lem:xi}}\label{subsec:nn-bijections}

In this subsection, we prove \Cref{lem:xi} by constructing the
bijection
\begin{equation}\label{eq:5}
\xi: \SYT^=(2n) \to \bigsqcup_{\substack{\lambda\vdash n \\ \mu\vdash 2n}}
    \SYT(\lambda) \times \YDT(\lambda^\square,\mu)
\end{equation} 
 using three bijections introduced by van Leeuwen in
\cite{vanLeeuwen2000, vanLeeuwen2001}.

For partitions \(\lambda\) and \(\mu\), let \(\lambda * \mu\) denote
the disconnected skew shape obtained by placing the Young diagram of
\(\lambda\) strictly to the left and below that of \(\mu\).

Recall that \( \LR(\mu/\lambda,\nu) \) is the set of Littlewood--Richardson
tableaux of shape \(\mu/\lambda\) and weight \(\nu\), and
\(\YDT(\lambda^\square,\mu)\) is the set of Yamanouchi domino
tableaux of shape
\(\lambda^\square=(2\lambda_1,2\lambda_1,2\lambda_2,2\lambda_2,\dots)\)
and weight \(\mu\). Recall also that for \(U\in \SYT(\lambda)\),
\[
  \SYT^{U}(\mu/\lambda) = \{ S\in \SYT(\mu/\lambda):\rect(S)=U \}.
\]

Consider \( T\in \SYT^=(2n) \), and let
\( U=T_{[n]}\in \SYT(\lambda) \) and
\( S=\st(T_{[n+1,2n]})\in \SYT(\mu/\lambda) \), where
\( \mu\vdash 2n \) and \( \lambda\vdash n \). Then \( T \) is
determined by \( U \) and \( S \). Since \( T_{[n]}=T^{[n+1,2n]} \),
we have \( S\in \SYT^{U}(\mu/\lambda) \).
Hence, the map \( \vartheta(T)= (U,S) \) gives a bijection
\begin{equation}\label{eq:vartheta}
 \vartheta: \SYT^=(2n) \to \bigsqcup_{\substack{\lambda\vdash n\\ \mu\vdash 2n}} 
\bigsqcup_{U\in \SYT(\lambda)} \{U\} \times \SYT^{U}(\mu/\lambda) .
\end{equation}
Thus, to find the map \( \xi \), it suffices to construct the
following sequence of bijections for fixed \( \lambda \), \( \mu \),
and \( U\in \SYT(\lambda) \):
\begin{equation}\label{eq:three-maps}
\SYT^{U}(\mu/\lambda)
\xrightarrow{\;\Psi_U\;}
\LR(\mu/\lambda,\lambda)
\xrightarrow{\;\Omega\;}
\LR(\lambda*\lambda,\mu)
\xrightarrow{\;\Phi_0\;}
\YDT(\lambda^\square,\mu).
\end{equation}
See \Cref{def:xi} for the precise description of the bijection \( \xi \).

For the rest of this subsection, we describe the bijections \(\Psi_U\),
\(\Omega\), and \(\Phi_0\).

\subsubsection{The maps \texorpdfstring{\(\Psi_U\)}{PsiU} and \texorpdfstring{\(\Omega\)}{Omega}}\label{subsec:maps}

For a partition \(\lambda\), let \(Q_\lambda\) denote the standard
Young tableau of shape \(\lambda\) obtained by filling the cells row
by row with \(1,2,\dots,|\lambda| \) in this order. Recall that
\(1_\lambda\) denotes the semistandard Young tableau of shape
\(\lambda\) and weight \(\lambda\) whose entries in row \(i\) are all
equal to \(i\). We will use the tableau switching map \( X \) in
\Cref{def:tableau-switching-explicit}.

\begin{defn}\label{def:Psi_U}
  Fix \( \mu\vdash 2n \), \( \lambda\vdash n \), and
  \(U\in \SYT(\lambda)\). The map
\[
\Psi_U: \SYT^{U}(\mu/\lambda) \to \LR(\mu/\lambda,\lambda)
\]
is defined as follows. For \(S \in \SYT^{U}(\mu/\lambda)\), let
\(X(Q_\lambda,S)=(A,B) \) and \(X(1_\lambda,B)=(C,D) \). Then we
define \( \Psi_U(S)=D \).
\end{defn}

Observe that in \Cref{def:Psi_U}, since
\( S \in \SYT^{U}(\mu/\lambda) \), by \Cref{prop:ts} we have
\( A=\rect(S)=U \) and \( \rect(B)=Q_\lambda \). Applying
\Cref{prop:ts} again to \( X(1_\lambda,B)=(C,D) \) gives
\( \wt(D) = \wt(1_\lambda)=\lambda \) and \( C=\rect(B)=Q_\lambda \), which implies
\( \sh(D) = \mu/\lambda \). By \cite[Corollary~2.5.2]{vanLeeuwen2001},
\(D\) is Littlewood--Richardson, and the map \(\Psi_U\) is a bijection.

\begin{exam}\label{exam:psi-running}
Let \( \mu=(5,3,2) \), \( \lambda=(3,2) \), and
\[
U=
\begin{ytableau}
1 & 2 & 4 \\
3 & 5
\end{ytableau}
\qquad \text{and} \qquad
S=
\begin{ytableau}
\none & \none & \none & 2 & 4 \\
\none & \none & 5 \\
1 & 3
\end{ytableau}
~\in \SYT^U(\mu/\lambda).
\]
Applying tableau switching (see \Cref{fig:2}), we obtain
\(X(Q_{\lambda},S)=(A,B) \),
where \( A=U \) and
\[
B=
\begin{ytableau}
\none & \none & \none & 2 & 3 \\
\none & \none & 1 \\
4 & 5
\end{ytableau}\,.
\]
Applying tableau switching again, we obtain
\(X(1_{\lambda},B)=(C,D) \), where \( C=Q_{\lambda} \) and
\[
D=\Psi_U(S)=
\begin{ytableau}
\none & \none & \none & 1 & 1 \\
\none & \none & 2 \\
1 & 2
\end{ytableau}
~\in \LR(\mu/\lambda,\lambda).
\]
\end{exam}

We now describe the bijection \(\Omega\).

\begin{defn}\label{def:Omega}
  Fix \( \mu\vdash 2n \) and \( \lambda\vdash n \). The map
\[
\Omega:\LR(\mu/\lambda,\lambda)\to \LR(\lambda*\lambda,\mu)
\]
is defined as follows. Let \(L\in \LR(\mu/\lambda,\lambda)\). For each
\(l,k\ge1\), suppose that \( L \) contains \( a_{l,k} \) entries equal
to \( k \) in row \( l \). Let \( M \) be the unique tableau in
\( \SSYT(\lambda) \) that contains \(a_{l,k}\) entries equal to \(l\)
in row \( k \) for each \( k,l\ge1 \). We then define
\(\Omega(L)\) to be the tableau of shape
\(\lambda*\lambda\) whose lower-left component is \(M\) and whose
upper-right component is \(1_\lambda\).
\end{defn}

By \cite[Proposition~1.4.5]{vanLeeuwen2001}, the map
\(\Omega\) is a bijection.

\begin{exam}\label{exam:omega-running}
Let \( \mu=(5,3,2) \), \( \lambda=(3,2) \), and
\[
L=
\begin{ytableau}
\none & \none & \none & 1 & 1 \\
\none & \none & 2 \\
1 & 2
\end{ytableau}
~\in \LR(\mu/\lambda,\lambda).
\]
Then the integers \( a_{l,k} \) and the tableau \(M\) in
\Cref{def:Omega} are given by
\[
(a_{l,k})=
\begin{pmatrix}
2 & 0 \\
0 & 1 \\
1 & 1
\end{pmatrix}
\qquad \text{and} \qquad
M=
\begin{ytableau}
1 & 1 & 3 \\
2 & 3
\end{ytableau}\,.
\]
Therefore, we have
\[
\Omega(L)=
\begin{ytableau}
\none & \none & \none & 1 & 1 & 1 \\
\none & \none & \none & 2 & 2 \\
1 & 1 & 3 \\
2 & 3
\end{ytableau}
~\in \LR(\lambda*\lambda,\mu).
\]
\end{exam}

\subsubsection{The map \texorpdfstring{\(\Phi_0\)}{Phi0}}

In what follows, we review the following bijection due to van Leeuwen
\cite[Theorem~2.2.6]{vanLeeuwen2000}:
\[
\Phi_0:\LR(\lambda*\lambda,\mu)\to \YDT(\lambda^\square,\mu).
\]

For \( k\ge1 \), let \( \delta_k = (k-1,k-2,\dots,1) \) denote the
staircase partition with \( k-1 \) parts.

The following is a specialization of van Leeuwen's map
\cite[Definition~2.2.1]{vanLeeuwen1999}. The original map is described
using \( 2 \)-quotients, but in our case, there is a simple
description using explicit coordinates.

\begin{defn}\label{def:8}
  Let \( \mu\vdash 2n \), \( \lambda\vdash n \), and
  \( k=\lambda_1+\ell(\lambda) \). Consider
  \(M\in \LR(\lambda*\lambda,\mu) \). Let \( M_1 \) and \( M_2 \) be
  the lower-left component and the upper-right component of \( M \),
  respectively. To define the domino tableau \( d(M) \), we start with
  the staircase partition \( \delta_k=(k-1,k-2,\dots,1) \) as the inner shape. 
  For each cell \( (i,j) \) of \( M_1 \) with entry \( a \), place the vertical
  domino \( \{(k+2i-j-1,j),(k+2i-j,j)\} \) with label \( a \). For
  each cell \( (i,j) \) of \( M_2 \) with entry \( a \), place the
  horizontal domino \( \{(i,k+2j-i-1), (i,k+2j-i)\} \) with label
  \( a \). Then \( d(M) \) is defined to be the resulting domino
  tableau.
\end{defn}

For an illustration, see \Cref{fig:sigma-example}. Note that the domino tableau \( d(M) \) has a skew shape obtained by removing the staircase partition \(\delta_k\). To move it one step toward a straight shape, we use the following explicit form of van Leeuwen's construction of chains and the operation of moving open chains for domino tableaux, specialized to the case where the inner shape is the staircase partition \(\delta_k\) \cite[Sections~4.2–4.5]{vanLeeuwen1999}.

\begin{figure} [t]
\centering
\(
M=\begin{ytableau}
\none & \none & \none & 1 & 1 & 1 \\
\none & \none & \none & 2 & 2 \\
1 & 1 & 3 \\
2 & 3
\end{ytableau}
\) \qquad \qquad 
\( d(M)= \)\hspace{1.2em}\raisebox{-.5\height}{\begin{tikzpicture}[x=.45cm,y=.45cm,baseline=(current bounding box.center),>=Stealth]
\filldraw[fill=gray!20,draw=black,line width=.6pt] (0,0) rectangle (1,-1);
\filldraw[fill=gray!20,draw=black,line width=.6pt] (1,0) rectangle (2,-1);
\filldraw[fill=gray!20,draw=black,line width=.6pt] (2,0) rectangle (3,-1);
\filldraw[fill=gray!20,draw=black,line width=.6pt] (3,0) rectangle (4,-1);
\filldraw[fill=gray!20,draw=black,line width=.6pt] (0,-1) rectangle (1,-2);
\filldraw[fill=gray!20,draw=black,line width=.6pt] (1,-1) rectangle (2,-2);
\filldraw[fill=gray!20,draw=black,line width=.6pt] (2,-1) rectangle (3,-2);
\filldraw[fill=gray!20,draw=black,line width=.6pt] (0,-2) rectangle (1,-3);
\filldraw[fill=gray!20,draw=black,line width=.6pt] (1,-2) rectangle (2,-3);
\filldraw[fill=gray!20,draw=black,line width=.6pt] (0,-3) rectangle (1,-4);
\draw[fill=white,line width=1pt] (0,-4) rectangle (1,-6);
\node at (0.500,-5.000) {1};
\draw[fill=white,line width=1pt] (1,-3) rectangle (2,-5);
\node at (1.500,-4.000) {1};
\draw[fill=white,line width=1pt] (4,0) rectangle (6,-1);
\node at (5.000,-0.500) {1};
\draw[fill=white,line width=1pt] (6,0) rectangle (8,-1);
\node at (7.000,-0.500) {1};
\draw[fill=white,line width=1pt] (8,0) rectangle (10,-1);
\node at (9.000,-0.500) {1};
\draw[fill=white,line width=1pt] (0,-6) rectangle (1,-8);
\node at (0.500,-7.000) {2};
\draw[fill=white,line width=1pt] (3,-1) rectangle (5,-2);
\node at (4.000,-1.500) {2};
\draw[fill=white,line width=1pt] (5,-1) rectangle (7,-2);
\node at (6.000,-1.500) {2};
\draw[fill=white,line width=1pt] (1,-5) rectangle (2,-7);
\node at (1.500,-6.000) {3};
\draw[fill=white,line width=1pt] (2,-2) rectangle (3,-4);
\node at (2.500,-3.000) {3};

\end{tikzpicture}}
\caption{An example of the construction of \( d(M) \) for
  \( M\in\LR(\lambda *\lambda,\mu) \)
  for \(\lambda=(3,2)\) and \( \mu=(5,3,2) \). The shaded region represents the staircase
  partition \( \delta_5=(4,3,2,1)\).}\label{fig:sigma-example}
\end{figure}

\begin{defn}\label{def:6}
  Suppose \( k\ge1 \) and \( \alpha \) is a partition with
  \( \delta_{k+1}\subseteq \alpha \). Let \( D \) be a semistandard domino
  tableau of shape \( \alpha/\delta_k \). We define \( \theta(D) \) as
  follows.

  \textbf{Step 1.} We first draw arrows between cells using the
  following rules.

  For each cell \( C=(i,j)\in \ZZ_{\ge1}^2 \) with
  \( j-i \equiv k \pmod 2 \) such that \( (\alpha/\delta_k)\cup \{C\} \)
  is a skew shape, we do the following. Let
  \( N=(i-1,j) \), \( E=(i,j+1) \), \( S=(i+1,j) \), and
  \( W=(i,j-1) \).
  \begin{description}
  \item[Case 1] Both \( N \) and \( W \) are contained in dominoes disjoint from \( C \).
    Let \( a \) and \( b \) be the labels of the dominoes containing \( N \) and \( W \), respectively.
    If \( a\ge b \), then draw an arrow from \( N \) to \( C \).
    If \( a< b \), then draw an arrow from \( W \) to \( C \).
  \item[Case 2] One of \( N \) and \( W \) is
    contained in a domino disjoint from \( C \) and the other is
    not contained in \( \alpha \). In this case, draw an arrow from the one of
    \( N \) and \( W \) that is contained in \( \alpha \) to
    \( C \).
  \item[Case 3] Both \( S \) and \( E \) are contained in dominoes disjoint from \( C \).
    Let \( a \) and \( b \) be the labels of the dominoes containing \( S \) and \( E \), respectively.
    If \( a\le b \), then draw an arrow from \( S \) to \( C \).
    If \( a> b \), then draw an arrow from \( E \) to \( C \).
  \item[Case 4] One of \( S \) and \( E \) is contained in a domino
    disjoint from \( C \) and the other is not contained in \( \alpha \). In
    this case, draw an arrow from the one of \( S \) and \( E \)
    that is contained in \( \alpha \) to \( C \).
  \end{description}
  If none of the cases above holds, we do not draw any arrow pointing
  to \( C \).

  \textbf{Step 2.} A \emph{chain} is a maximal sequence
  \( (\zeta_1,\dots,\zeta_m) \) of distinct dominoes such that there is
  an arrow from a cell in \( \zeta_{i+1} \) to a cell in
  \( \zeta_{i} \) for all \( i\in [m-1] \). If there is an arrow from
  a cell in \( \zeta_1 \) to a cell outside \( \alpha/\delta_k \),
  then we say that the chain is \emph{open}, and otherwise the chain
  is \emph{closed}.

  For every open chain \( (\zeta_1,\dots,\zeta_m) \), we modify the
  dominoes in this chain as follows. For each \( i\in [m] \), if there
  is an arrow from a cell \( A\in \zeta_{i} \) to a cell \( B \), then
  replace the domino \( \zeta_i \) by \( \{A,B\} \), carrying the label of
  \(\zeta_i\) with it.

 By van Leeuwen's construction, the above operation produces a
 semistandard domino tableau whose inner shape is \( \delta_{k-1} \).
 We denote this tableau by \( \theta(D) \).
\end{defn}

In the definition of \( \theta(D) \), only the dominoes in open chains
are modified, while those in closed chains remain fixed. 
By van Leeuwen's criterion, this operation preserves the Yamanouchi property
\cite[Section~5.1]{vanLeeuwen2000}. Hence, if \(D\) is Yamanouchi, then so is
\(\theta(D)\).

We are now ready to define the map \( \Phi_0 \).

\begin{defn}\label{def:7}
  Let \( \mu\vdash 2n \), \( \lambda\vdash n \), and
\( k=\lambda_1+\ell(\lambda) \).
We define the map
\[
  \Phi_0:\LR(\lambda*\lambda,\mu)\to \YDT(\lambda^\square,\mu)
\]
by \( \Phi_0(M) = \theta^{k-1}(d(M)) \).
\end{defn}

By \cite[Theorem~2.2.6]{vanLeeuwen2000}, the map \( \Phi_0 \) is a
bijection such that the weight of \( M\in \LR(\lambda*\lambda,\mu) \)
is equal to the weight of
\( \Phi_0(M)\in \YDT(\lambda^\square,\mu) \).

\begin{exam}\label{exam:phi0-running} 
  Consider the tableau \( M\in \LR(\lambda*\lambda,\mu) \) and
  \( d(M) \) in \Cref{fig:sigma-example}, where \( \lambda=(3,2) \)
  and \( \mu=(5,3,2) \). Then \(k=\lambda_1+\ell(\lambda) = 5\). Thus
  \( \Phi_0(M) = \theta^4(d(M)) \), as illustrated in
  \Cref{fig:phi0-w_c}. The tableau \(\Phi_0(M)\) lies in
  \(\YDT(\lambda^\square,\mu)\), as required, and has spin \(3\).
\end{exam}

\begin{figure}[t]
\hspace*{0.2em}
\noindent\makebox[6em][r]{\(D\)}~\(=\)~\raisebox{-.5\height}{\begin{tikzpicture}[x=.45cm,y=.45cm,baseline=(current bounding box.center),>=Stealth]
\filldraw[fill=gray!20,draw=black,line width=.6pt] (0,0) rectangle (1,-1);
\filldraw[fill=gray!20,draw=black,line width=.6pt] (1,0) rectangle (2,-1);
\filldraw[fill=gray!20,draw=black,line width=.6pt] (2,0) rectangle (3,-1);
\filldraw[fill=gray!20,draw=black,line width=.6pt] (3,0) rectangle (4,-1);
\filldraw[fill=gray!20,draw=black,line width=.6pt] (0,-1) rectangle (1,-2);
\filldraw[fill=gray!20,draw=black,line width=.6pt] (1,-1) rectangle (2,-2);
\filldraw[fill=gray!20,draw=black,line width=.6pt] (2,-1) rectangle (3,-2);
\filldraw[fill=gray!20,draw=black,line width=.6pt] (0,-2) rectangle (1,-3);
\filldraw[fill=gray!20,draw=black,line width=.6pt] (1,-2) rectangle (2,-3);
\filldraw[fill=gray!20,draw=black,line width=.6pt] (0,-3) rectangle (1,-4);
\draw[fill=white,line width=1pt] (0,-4) rectangle (1,-6);
\node at (0.500,-5.000) {1};
\draw[fill=white,line width=1pt] (1,-3) rectangle (2,-5);
\node at (1.500,-4.000) {1};
\draw[fill=white,line width=1pt] (4,0) rectangle (6,-1);
\node at (5.000,-0.500) {1};
\draw[fill=white,line width=1pt] (6,0) rectangle (8,-1);
\node at (7.000,-0.500) {1};
\draw[fill=white,line width=1pt] (8,0) rectangle (10,-1);
\node at (9.000,-0.500) {1};
\draw[fill=white,line width=1pt] (0,-6) rectangle (1,-8);
\node at (0.500,-7.000) {2};
\draw[fill=white,line width=1pt] (3,-1) rectangle (5,-2);
\node at (4.000,-1.500) {2};
\draw[fill=white,line width=1pt] (5,-1) rectangle (7,-2);
\node at (6.000,-1.500) {2};
\draw[fill=white,line width=1pt] (1,-5) rectangle (2,-7);
\node at (1.500,-6.000) {3};
\draw[fill=white,line width=1pt] (2,-2) rectangle (3,-4);
\node at (2.500,-3.000) {3};
\draw[-stealth,line width=1pt] (0.500,-4.500) -- (0.500,-3.500);
\draw[-stealth,line width=1pt] (1.500,-3.500) -- (1.500,-2.500);
\draw[-stealth,line width=1pt] (4.500,-0.500) -- (3.500,-0.500);
\draw[-stealth,line width=1pt] (6.500,-0.500) -- (5.500,-0.500);
\draw[-stealth,line width=1pt] (8.500,-0.500) -- (7.500,-0.500);
\draw[-stealth,line width=1pt] (0.500,-6.500) -- (0.500,-5.500);
\draw[-stealth,line width=1pt] (3.500,-1.500) -- (2.500,-1.500);
\draw[-stealth,line width=1pt] (5.500,-1.500) -- (4.500,-1.500);
\draw[-stealth,line width=1pt] (1.500,-5.500) -- (1.500,-4.500);
\draw[-stealth,line width=1pt] (2.500,-2.500) -- (3.500,-2.500);
\end{tikzpicture}}
\vspace{2.0em}
\noindent\makebox[3.5em][r]{\(\theta(D)\)}~\(=\)~\raisebox{-.5\height}{\begin{tikzpicture}[x=.45cm,y=.45cm,baseline=(current bounding box.center),>=Stealth]
\filldraw[fill=gray!20,draw=black,line width=.6pt] (0,0) rectangle (1,-1);
\filldraw[fill=gray!20,draw=black,line width=.6pt] (1,0) rectangle (2,-1);
\filldraw[fill=gray!20,draw=black,line width=.6pt] (2,0) rectangle (3,-1);
\filldraw[fill=gray!20,draw=black,line width=.6pt] (0,-1) rectangle (1,-2);
\filldraw[fill=gray!20,draw=black,line width=.6pt] (1,-1) rectangle (2,-2);
\filldraw[fill=gray!20,draw=black,line width=.6pt] (0,-2) rectangle (1,-3);
\draw[fill=white,line width=1pt] (0,-3) rectangle (1,-5);
\node at (0.500,-4.000) {1};
\draw[fill=white,line width=1pt] (1,-2) rectangle (2,-4);
\node at (1.500,-3.000) {1};
\draw[fill=white,line width=1pt] (3,0) rectangle (5,-1);
\node at (4.000,-0.500) {1};
\draw[fill=white,line width=1pt] (5,0) rectangle (7,-1);
\node at (6.000,-0.500) {1};
\draw[fill=white,line width=1pt] (7,0) rectangle (9,-1);
\node at (8.000,-0.500) {1};
\draw[fill=white,line width=1pt] (0,-5) rectangle (1,-7);
\node at (0.500,-6.000) {2};
\draw[fill=white,line width=1pt] (2,-1) rectangle (4,-2);
\node at (3.000,-1.500) {2};
\draw[fill=white,line width=1pt] (4,-1) rectangle (6,-2);
\node at (5.000,-1.500) {2};
\draw[fill=white,line width=1pt] (1,-4) rectangle (2,-6);
\node at (1.500,-5.000) {3};
\draw[fill=white,line width=1pt] (2,-2) rectangle (4,-3);
\node at (3.000,-2.500) {3};
\draw[-stealth,line width=1pt] (0.500,-3.500) -- (0.500,-2.500);
\draw[-stealth,line width=1pt] (1.500,-2.500) -- (1.500,-1.500);
\draw[-stealth,line width=1pt] (3.500,-0.500) -- (2.500,-0.500);
\draw[-stealth,line width=1pt] (5.500,-0.500) -- (4.500,-0.500);
\draw[-stealth,line width=1pt] (7.500,-0.500) -- (6.500,-0.500);
\draw[-stealth,line width=1pt] (0.500,-5.500) -- (0.500,-4.500);
\draw[-stealth,line width=1pt] (2.500,-1.500) -- (2.500,-2.500);
\draw[-stealth,line width=1pt] (4.500,-1.500) -- (3.500,-1.500);
\draw[-stealth,line width=1pt] (1.500,-4.500) -- (1.500,-3.500);
\draw[-stealth,line width=1pt] (3.500,-2.500) -- (4.500,-2.500);
\end{tikzpicture}}\par

\hspace*{-2.5em}
\noindent\makebox[6em][r]{\(\theta^2(D)\)}~\(=\)~\raisebox{-.5\height}{\begin{tikzpicture}[x=.45cm,y=.45cm,baseline=(current bounding box.center),>=Stealth]
\filldraw[fill=gray!20,draw=black,line width=.6pt] (0,0) rectangle (1,-1);
\filldraw[fill=gray!20,draw=black,line width=.6pt] (1,0) rectangle (2,-1);
\filldraw[fill=gray!20,draw=black,line width=.6pt] (0,-1) rectangle (1,-2);
\draw[fill=white,line width=1pt] (0,-2) rectangle (1,-4);
\node at (0.500,-3.000) {1};
\draw[fill=white,line width=1pt] (1,-1) rectangle (2,-3);
\node at (1.500,-2.000) {1};
\draw[fill=white,line width=1pt] (2,0) rectangle (4,-1);
\node at (3.000,-0.500) {1};
\draw[fill=white,line width=1pt] (4,0) rectangle (6,-1);
\node at (5.000,-0.500) {1};
\draw[fill=white,line width=1pt] (6,0) rectangle (8,-1);
\node at (7.000,-0.500) {1};
\draw[fill=white,line width=1pt] (0,-4) rectangle (1,-6);
\node at (0.500,-5.000) {2};
\draw[fill=white,line width=1pt] (2,-1) rectangle (3,-3);
\node at (2.500,-2.000) {2};
\draw[fill=white,line width=1pt] (3,-1) rectangle (5,-2);
\node at (4.000,-1.500) {2};
\draw[fill=white,line width=1pt] (1,-3) rectangle (2,-5);
\node at (1.500,-4.000) {3};
\draw[fill=white,line width=1pt] (3,-2) rectangle (5,-3);
\node at (4.000,-2.500) {3};
\draw[-stealth,line width=1pt] (0.500,-2.500) -- (0.500,-1.500);
\draw[-stealth,line width=1pt] (1.500,-1.500) -- (1.500,-0.500);
\draw[-stealth,line width=1pt] (2.500,-0.500) -- (2.500,-1.500);
\draw[-stealth,line width=1pt] (4.500,-0.500) -- (3.500,-0.500);
\draw[-stealth,line width=1pt] (6.500,-0.500) -- (5.500,-0.500);
\draw[-stealth,line width=1pt] (0.500,-4.500) -- (0.500,-3.500);
\draw[-stealth,line width=1pt] (2.500,-2.500) -- (1.500,-2.500);
\draw[-stealth,dashed,line width=1pt] (3.500,-1.500) -- (3.500,-2.500);
\draw[-stealth,line width=1pt] (1.500,-3.500) -- (2.500,-3.500);
\draw[-stealth,dashed,line width=1pt] (4.500,-2.500) -- (4.500,-1.500);
\end{tikzpicture}}
\vspace{2.0em}
\noindent\makebox[6em][r]{\(\theta^3(D)\)}~\(=\)~\raisebox{-.5\height}{\begin{tikzpicture}[x=.45cm,y=.45cm,baseline=(current bounding box.center),>=Stealth]
\filldraw[fill=gray!20,draw=black,line width=.6pt] (0,0) rectangle (1,-1);
\draw[fill=white,line width=1pt] (0,-1) rectangle (1,-3);
\node at (0.500,-2.000) {1};
\draw[fill=white,line width=1pt] (1,0) rectangle (2,-2);
\node at (1.500,-1.000) {1};
\draw[fill=white,line width=1pt] (2,0) rectangle (3,-2);
\node at (2.500,-1.000) {1};
\draw[fill=white,line width=1pt] (3,0) rectangle (5,-1);
\node at (4.000,-0.500) {1};
\draw[fill=white,line width=1pt] (5,0) rectangle (7,-1);
\node at (6.000,-0.500) {1};
\draw[fill=white,line width=1pt] (0,-3) rectangle (1,-5);
\node at (0.500,-4.000) {2};
\draw[fill=white,line width=1pt] (1,-2) rectangle (3,-3);
\node at (2.000,-2.500) {2};
\draw[fill=white,line width=1pt] (3,-1) rectangle (5,-2);
\node at (4.000,-1.500) {2};
\draw[fill=white,line width=1pt] (1,-3) rectangle (3,-4);
\node at (2.000,-3.500) {3};
\draw[fill=white,line width=1pt] (3,-2) rectangle (5,-3);
\node at (4.000,-2.500) {3};
\draw[-stealth,line width=1pt] (0.500,-1.500) -- (0.500,-0.500);
\draw[-stealth,dashed,line width=1pt] (1.500,-0.500) -- (2.500,-0.500);
\draw[-stealth,dashed,line width=1pt] (2.500,-1.500) -- (1.500,-1.500);
\draw[-stealth,line width=1pt] (3.500,-0.500) -- (3.500,-1.500);
\draw[-stealth,line width=1pt] (5.500,-0.500) -- (4.500,-0.500);
\draw[-stealth,line width=1pt] (0.500,-3.500) -- (0.500,-2.500);
\draw[-stealth,dashed,line width=1pt] (1.500,-2.500) -- (1.500,-3.500);
\draw[-stealth,line width=1pt] (4.500,-1.500) -- (5.500,-1.500);
\draw[-stealth,dashed,line width=1pt] (2.500,-3.500) -- (2.500,-2.500);
\draw[-stealth,line width=1pt] (3.500,-2.500) -- (3.500,-3.500);
\end{tikzpicture}}\par

\hspace*{-22em}
\noindent\makebox[6em][r]{\(\theta^4(D)\)} ~\(=\)~\raisebox{-.5\height}{\begin{tikzpicture}[x=.45cm,y=.45cm,baseline=(current bounding box.center),>=Stealth]
\draw[fill=white,line width=1pt] (0,0) rectangle (1,-2);
\node at (0.500,-1.000) {1};
\draw[fill=white,line width=1pt] (1,0) rectangle (2,-2);
\node at (1.500,-1.000) {1};
\draw[fill=white,line width=1pt] (2,0) rectangle (3,-2);
\node at (2.500,-1.000) {1};
\draw[fill=white,line width=1pt] (3,0) rectangle (4,-2);
\node at (3.500,-1.000) {1};
\draw[fill=white,line width=1pt] (4,0) rectangle (6,-1);
\node at (5.000,-0.500) {1};
\draw[fill=white,line width=1pt] (0,-2) rectangle (1,-4);
\node at (0.500,-3.000) {2};
\draw[fill=white,line width=1pt] (1,-2) rectangle (3,-3);
\node at (2.000,-2.500) {2};
\draw[fill=white,line width=1pt] (4,-1) rectangle (6,-2);
\node at (5.000,-1.500) {2};
\draw[fill=white,line width=1pt] (1,-3) rectangle (3,-4);
\node at (2.000,-3.500) {3};
\draw[fill=white,line width=1pt] (3,-2) rectangle (4,-4);
\node at (3.500,-3.000) {3};
\end{tikzpicture}}\par
\caption{Successive applications of the map \( \theta \) to \(D\).
Solid arrows indicate open chains and dashed arrows indicate closed chains.}
\label{fig:phi0-w_c}
\end{figure}

We complete the construction of the bijection \(\xi\) by assembling all the ingredients introduced above. 

\begin{defn}\label{def:xi}
  We define the map
  \[
\xi: \SYT^=(2n) \to \bigsqcup_{\substack{\lambda\vdash n \\ \mu\vdash 2n}}
    \SYT(\lambda) \times \YDT(\lambda^\square,\mu)
\]
as follows. For \(T\in\SYT^{=}(2n)\), let \( U=T_{[n]} \) and
\( S=\st(T_{[n+1,2n]}) \). Then
\[
\xi(T)=
\left(
U,\,
(\Phi_0\circ \Omega\circ \Psi_U)(S)
\right).
\]
Equivalently,
\[
\xi(T)= (\Phi'_0\circ \Omega'\circ \Psi' \circ \vartheta)(T),
\]
where \( \vartheta \) is the map in \eqref{eq:vartheta} and
\( \Psi'(U,A) = (U,\Psi_U(A)) \), \( \Omega'(U,B) = (U,\Omega(B)) \),
and \( \Phi'_0(U,C) = (U,\Phi_0(C)) \).
\end{defn}

\begin{exam} 
Let
\[
T=
\begin{ytableau}
1 & 2 & 4 & 7 & 9 \\
3 & 5 & 10 \\
6 & 8
\end{ytableau}
\in \SYT^{=}(10).
\]
By \Cref{exam:psi-running,exam:omega-running,exam:phi0-running}, we have \(\spin(T)=3\).
Since \( T\in\SYT_{(5,5)}((5,3,2)) \) and 
\(\spin(T)\equiv 5 \pmod{2} \), we have \( T\in \SYT_{(5,5)}^{\spin}((5,3,2)) \).
\end{exam}

Finally, we prove \Cref{lem:xi}.

\begin{proof}[Proof of \Cref{lem:xi}]
  Since the maps in \eqref{eq:vartheta} and \eqref{eq:three-maps} are
  bijections, so is \( \xi \). Consider \(T\in\SYT^{=}(2n)\). Letting
  \( U=T_{[n]} \) and \( S=\st(T_{[n+1,2n]}) \), we have
  \( \xi(T) = (U,D) \), where
  \( D= (\Phi_0\circ \Omega\circ \Psi_U)(S)\). Suppose
  \( \sh(T)=\mu \) and \( \sh(U) = \lambda \). Then
  \( S\in \SYT^U(\mu/\lambda) \), so we have
  \( D\in \YDT(\lambda^\square,\mu) \) by \eqref{eq:three-maps}.
  Therefore, the weight of \( D \) is equal to \( \mu=\sh(T) \), as desired.
\end{proof}

\section{Further directions}\label{sec:further-directions}

By \Cref{prop:prod-L-la}, to find a refined \( (\lambda,\mu) \)-Thrall
subset, it suffices to consider the case that \( \lambda \) is a
rectangle. In this paper, we have found a refined
\( (\lambda,\mu) \)-Thrall subset when \( \lambda \) is a rectangle
with two rows. Recall that the case when \( \lambda \) is a rectangle
with two columns is also known. Hence, a natural question is to
generalize this result to arbitrary rectangles.

\begin{problem}
  For \(n,k\ge 3\), find a \( ((k^n),\mu) \)-Thrall subset for each
  \(\mu\vdash kn\). In other words, for each \(\mu\vdash kn\), find a
  subset \( \mathcal{A}(\mu) \) of \( \SYT_{(k^n)}(\mu) \) such that
\[
\ch(L_{(k^n)}) = \sum_{\mu\vdash kn} |\mathcal{A}(\mu)|\, s_\mu.
\]
\end{problem}

The second problem is to understand the spin statistic more directly at
the level of standard Young tableaux. Currently, the
quantity \(\spin(T)\) is defined by passing through the chain of
bijections \( \Psi_U \), \( \Omega \), and \( \Phi_0 \) in
\eqref{eq:three-maps}.

\begin{problem}
  Find a direct description of the statistic \(\spin(T)\) in
  \Cref{def:4}. A weaker but still interesting goal is to describe the
  parity of \(\spin(T)\) directly.
\end{problem}

\section*{Acknowledgments}

This project was initiated at the KRIMS 2026 Winter School
``Symmetric Functions and Group Representations''. The authors thank
Soon-Yi Kang, Kyungbae Park, and Seunghyun Seo for their hospitality and support.

\end{document}